\documentclass[a4paper,oneside,11pt]{article}
\usepackage{a4wide, dsfont}
\usepackage{graphicx, color}
\usepackage[latin1]{inputenc}
\usepackage[pagebackref,pdftex,bookmarks,colorlinks,breaklinks]{hyperref}
\usepackage{amsmath}%
\usepackage{amssymb}%
\usepackage{amsthm}
\usepackage{verbatim}
\usepackage{booktabs}
\usepackage{subfigure}

\newcommand{\R}{\ensuremath{\mathbb{R}}}%

\newcommand{\norm}[1]{\Vert #1 \Vert}
\newcommand{\sqnorm}[1]{\Vert #1 \Vert_2^2}
\newcommand{\abs}[1]{\vert #1 \vert}

\DeclareMathOperator*{\EDP}{\reply{EDP}}
\DeclareMathOperator*{\DP}{DP}
\newcommand{\EDPm}{\reply{\mathrm{EDP}_m}}
\newcommand{\DPm}{\mathrm{DP}_m}
\newcommand{\alphaDP}{\hat{\alpha}_{\EDP}}
\DeclareMathOperator*{\mspe}{\reply{MSPE}}
\newcommand{\mspem}{\reply{\mathrm{MSPE}_m}}
\newcommand{\mseem}{\reply{\mathrm{MSEE}_m}}
\DeclareMathOperator*{\psure}{\reply{PSURE}}
\newcommand{\psurem}{\reply{\mathrm{PSURE}_m}}
\newcommand{\alphaPSURE}{\hat \alpha_{\mspe}}
\newcommand{\alphaHatPSURE}{\hat \alpha_{\psure}}
\DeclareMathOperator*{\msee}{\reply{MSEE}}
\DeclareMathOperator*{\gsure}{\reply{SURE}}
\newcommand{\gsurem}{\reply{\mathrm{SURE}_m}}
\newcommand{\alphaGSURE}{\hat \alpha_{\gsure}^*}
\newcommand{\alphaHatGSURE}{\hat \alpha_{\gsure}}
\newcommand{\alphaHatDP}{\hat \alpha_{\DP}}
\newcommand{\Indicator}{\mathds{1}}
\newcommand{\xML}{x_{\rm \tiny{ML}}}

\DeclareMathOperator*{\argmin}{argmin}
\DeclareMathOperator*{\cond}{cond}

\DeclareMathOperator*{\rank}{rank}

\newcommand{\Exp}{\mathbb{E}}

\newcommand{\diag}{\text{diag}}
\newcommand{\trace}{\text{tr}}
\newcommand{\rmd}{\mathrm{d}}
\DeclareMathOperator*{\sign}{sign}

\parindent = 0pt 

\newtheorem{Theorem}{Theorem}
\newtheorem{Proposition}{Proposition}
\newtheorem{Assumption}{Assumption}
\newtheorem{Lemma}{Lemma}
\newtheorem{Corollary}{Corollary}
\newtheorem{Remark}{Remark}
\newtheorem{Algorithm}{Algorithm}
\newtheorem{Definition}{Definition}

\newcommand{\reply}[1]{ {#1}}


\begin{document}

\title{ Risk Estimators for Choosing Regularization Parameters in Ill-Posed Problems - Properties and Limitations}
\author{Felix Lucka \thanks{Centre for Medical Image Computing, University College London, WC1E 6BT London, UK
email: f.lucka@ucl.ac.uk} \and Katharina Proksch\thanks{Institut f\"ur Mathematische Stochastik, Georg-August-Universit\"at G\"ottingen, Goldschmidtstrasse 7, 37077 G\"ottingen, Germany, e-mail:	kproksc@uni-goettingen.de} \and Christoph Brune\thanks{Department of Applied Mathematics,  University of Twente, P.O. Box 217, 7500 AE Enschede, The Netherlands, e-mail:c.brune@utwente.nl} \and  Nicolai Bissantz\thanks{Fakult\"at f\"ur Mathematik, Ruhr-Universit\"at Bochum, 44780 Bochum, Germany, e-mail: nicolai.bissantz@ruhr-uni-bochum.de} \and Martin Burger\thanks{Institut f\"ur Numerische und Angewandte Mathematik, Westf\"alische Wilhelms-Universit\"at (WWU) M\"unster. Einsteinstr. 62, D 48149 M\"unster, Germany. e-mail: martin.burger@wwu.de  } \and Holger Dette\thanks{Fakult\"at f\"ur Mathematik, Ruhr-Universit\"at Bochum, 44780 Bochum, Germany, e-mail: holger.dette@ruhr-uni-bochum.de} \and Frank W\"ubbeling\thanks{Institut f\"ur Numerische und Angewandte Mathematik, Westf\"alische Wilhelms-Universit\"at (WWU) M\"unster. Einsteinstr. 62, D 48149 M\"unster, Germany. e-mail: frank.wuebbeling@wwu.de  }  }
\maketitle
%
\begin{abstract}
\reply{
This paper discusses the properties of certain risk estimators that recently regained popularity for choosing regularization parameters in ill-posed problems, in particular for sparsity regularization. They apply Stein's unbiased risk estimator (SURE) to estimate the risk in either the space of the unknown variables or in the data space, which we call PSURE
in order to distinguish the two different risk functions. It seems intuitive that SURE is more appropriate for ill-posed problems, since the properties in the data space do not tell much about the quality of the reconstruction. We provide theoretical studies of both approaches for linear Tikhonov regularization in a finite dimensional setting and estimate the quality of the risk estimators, which also leads to asymptotic convergence results as the dimension of the problem tends to infinity. Unlike previous works which studied single realizations of image processing problems with a very low degree of ill-posedness, we are interested in the statistical behaviour of the risk estimators for increasing ill-posedness. Interestingly, our theoretical results indicate that the quality of the SURE risk can deteriorate asymptotically for ill-posed problems, which is confirmed by an extensive numerical study. The latter shows that in many cases the SURE estimator leads to extremely small regularization parameters, which obviously cannot stabilize the reconstruction. Similar but less severe issues with respect to robustness also appear for the PSURE estimator, which in comparison to the rather conservative discrepancy principle leads to the conclusion that regularization parameter choice based on unbiased risk estimation is not a reliable procedure for ill-posed problems. A similar numerical study for sparsity regularization demonstrates that the same issue appears in non-linear variational regularization approaches.
}

{\bf Keywords: }  Ill-posed problems, regularization parameter choice, risk estimators, Stein's method, discrepancy principle.
%
\end{abstract}

%
%
%
\section{Introduction} \label{sec:Intro}

Choosing suitable regularization parameters is a problem as old as regularization theory, which has seen a variety of approaches both from deterministic (e.g. L-curve criteria, \cite{Hansen1993,Hansen1992}) or statistical perspectives (e.g. Lepskij principles, \cite{Bauer2005,Lepskii1991}), respectively in between (e.g. discrepancy principles motivated by deterministic bounds or noise variance, cf. \cite{vainikko1982discrepancy,blanchard2012discrepancy}).
{While the particular class of statistical parameter choice rules based on unbiased risk estimation (URE) was used for linear inverse reconstruction techniques early on \cite{rice1986choice,thompson1991study,galatsanos1992methods}, there is a renewed interest in these approaches for iterative, non-linear inverse reconstruction techniques, in particular in the context of sparsity constraints, see e.g., \cite{Ramani2008,Eldar2009,VanDeVille2009,Giryes2011,Luisier2011,VanDeVille2011,Deledalle2012,Deledalle2012a,Dossal2013,Vaiter2013,Wang2013,Deledalle2014,Vaiter2014}). These works are based on extending Stein's general construction of an unbiased risk estimator \cite{Stein1981} to the inverse problems setting. Compared to approaches that measure the risk in the data space, the classical SURE and a generalized version (GSURE, \cite{Eldar2009,Giryes2011,Deledalle2012a,Vaiter2013}) measure the risk in the space of the unknown which seems more appropriate for ill-posed problems. Previous investigations show that the performance of such parameter choice rules is reasonable in many different settings (cf. \cite{HaghshenasLari2014,Weller2014,Candes2013,Almeida2013,Ramani2012,Pesquet2009,Eldar2009}). However, most of the problems considered in these works are very mildly ill-posed (which we will define more precisely below), the interplay between ill-posedness and the performance of the risk estimators is not studied explicitly and the inherent statistical nature of the selected regularization parameter is ignored as only single realizations of noise are typically considered. }

\reply{
Therefore, a first motivation of this paper is to further study the properties of SURE in Tikhonov-type regularization methods from a statistical perspective and systematically in dependence of the ill-posedness of the problem.}
For this purpose we provide a theoretical analysis of the quality of unbiased risk estimators in  the case of linear Tikhonov regularization. In addition, we carry out extensive numerical investigations on appropriate model problems. While in very mildly ill-posed settings the performances of the parameter choice rules under consideration are reasonable and comparable, our investigations yield various interesting results and insights in ill-posed settings. For instance, we demonstrate that \reply{SURE} shows a rather erratic behaviour as the degree of ill-posedness increases. The observed effects are so strong that the meaning of a parameter chosen according to this particular criterion is unclear. 

A second motivation of this paper is to study the discrepancy principle as a reference method and as we shall see it can indeed be put in a very similar context and analysed by the same techniques. Although the popularity of the discrepancy principle is decreasing recently in favour of choices using more statistical details, our findings show that it is still more robust for ill-posed problems than risk-based parameter choices. The conservative choice by the discrepancy principle is well-known to rather overestimate the optimal parameter, but on the other hand it avoids to choose too small regularization as risk-based methods often do. In the latter case the reconstruction results are completely deteriorated, while the discrepancy principle yields a reliable, though not optimal, reconstruction.

\reply{
\paragraph{Formal Introduction}
We consider a discrete inverse problem of the form }
\begin{equation}\label{model}
  y=Ax^* + \varepsilon,
\end{equation}
where $y\in\R^m$ is a vector of observations, $A\in\R^{m\times n}$ is a known matrix, and $\varepsilon\in\R^m$ is a noise vector. We assume that $\varepsilon$ consists of independent and identically distributed (\textit{i.i.d.}) Gaussian errors, i.e., $\varepsilon\sim\mathcal{N}(0,\sigma^2 I_m)$. The vector $x^*\in\R^n$ denotes the (unknown) exact solution to be reconstructed from the observations. \reply{There are two potential difficulties for this: If $A$ has a non-trivial kernel, e.g. for $n > m$, we simply cannot observe certain aspects of $x^*$ and regularization has to interpolate them from the observed features in some way. This, however, typicallyis not the ill-posedness we are interested, in practice we know what we miss and we consider these problems only "mildly ill-posed". The second difficulty is more subtle: The singular values of $A$ might decay very fast, which means that certain aspects of $x^*$ are barely measurable and even small additional noise $\varepsilon$ can render their recovery unstable. This is the main difficulty we are interested in here, so we will measure the degree of ill-posedness of \eqref{model} by the condition of $A$ restricted to its co-kernel, i.e. the ratio between largest and smallest non-zero singular value. Note that this definition deviates from the classical definition of ill-posedness for continuous problems by Hadamard \cite{Ha23}, which leads to a binary classification of problems as either well- or ill-posed but is not very useful for practical applications.}
In order to find an estimate $\hat x(y)$ of $x^*$ from \eqref{model}, we apply a variational regularization method:
\begin{equation} \label{eq:VarReg}
  \hat x_{\alpha}(y)=\argmin_{x\in\R^n} \; \frac{1}{2}\|Ax-y\|_2^2+\alpha R(x),
\end{equation}
where $R$ is assumed convex and such that the minimizer is unique for positive regularization parameter \reply{$\alpha > 0$}.
In what follows the dependence of $\hat x_{\alpha}(y)$ on $\alpha$ and the data $y$ may be dropped where it is clear without ambiguity that $\hat x=\hat x_{\alpha}(y).$

In practice there are two choices to be made: First, a regularization functional $R$ needs to be specified in order to appropriately represent a-priori knowledge about solutions and second, a regularization parameter $\alpha$ needs to be chosen in dependence of the data $y$. The ideal parameter choice would minimize a difference between $\hat x_{\alpha}(y)$ and $x^*$ over all $\alpha$, which obviously cannot be computed and is hence replaced by a parameter choice rule that tries to minimize a worst-case or average error to the unknown solution, which can be referred to as a risk minimization. In the practical case of having a single observation only, the risk based on average error needs to be replaced by an estimate as well, and unbiased risk estimators that will be detailed in the following are a natural choice.

For the sake of a clearer presentation of methods and results  we first focus on linear Tikhonov regularization, i.e.,
\begin{equation*}
	  R(x)=\frac{1}2 \|x\|_2^2,
\end{equation*}
leading to the explicit Tikhonov estimator
\begin{equation} \label{eq:ExplTikhonov}
	\hat x_{\alpha}(y) = T_\alpha y := (A^* A + \alpha I)^{-1} A^* y .
\end{equation}
In this setting, a natural distance for measuring the error of $\hat x_{\alpha}(y)$ is given by its $\ell_2$-distance to $x^*$. Thus, we define 
\begin{equation}
\alpha^* := \argmin_{\alpha \geqslant 0} \; \|\hat x_{\alpha}(y) - x^* \|_2^2 \label{eq:Optimal}
\end{equation}
as the optimal, but inaccessible, regularization parameter. Many different rules for the choice of the regularization parameter $\alpha$ are discussed in the literature. Here, we focus on strategies that rely on an accurate estimate of the noise variance $\sigma^2$. A classical example of such a rule is given by the \emph{discrepancy principle}: The regularization parameter $\alphaHatDP$ is given as the solution of the equation
\begin{equation} \label{DP}
	  \|A\hat x_{\alpha}(y)-y\|^2_2 = m\sigma^2.
\end{equation}
The discrepancy principle is robust and easy-to-implement for many applications (cf. \cite{blomgren2002modular,jin2012iterative,qu2006discrepancy}) and is based on the heuristic argument, that $\hat x_{\alpha}(y)$ should only explain the data $y$ up to the noise level. 
\reply{
The broader class of unbiased risk estimators accounts for the stochastic nature of $\varepsilon$ by aiming to choose $\alpha$ such that it minimizes certain $\ell_2$-errors between $\hat x_{\alpha}(y)$ and $x^*$ only \emph{in expectation:} We first define the \emph{mean squared prediction error}\emph{(MSPE)} as 
\begin{equation}\label{MSPE}
\mspe(\alpha) := \Exp \left[\sqnorm{A \left(x^* - \hat{x}_\alpha (y) \right) } \right]
\end{equation}
and refer to its minimizer as $\alphaPSURE$. Since $\mspe$ depends on the unknown vector $x^*$, we have to replace it by an unbiased estimate we will call $\psure$ here and define:
\begin{equation}\label{PSURE}
\alphaHatPSURE  \in \argmin_{\alpha \geqslant 0} \; \psure(\alpha,y) := \argmin_{\alpha \geqslant 0} \; \sqnorm{y - A \hat{x}_\alpha(y)} - m \sigma^2 + 2 \sigma^2 \text{df}_\alpha (y)
\end{equation}
with 
\begin{equation*}
\text{df}_\alpha (y) = \trace\left(  \nabla_y \cdot A \hat{x}_\alpha (y) \right).
\end{equation*}
While the classical SURE estimator would try to estimate the expectation of the simple $\ell_2$-error between  $\hat x_{\alpha}(y)$ and $x^*$ like in \eqref{eq:Optimal}, a generalization \cite{Eldar2009,Giryes2011} is often considered in inverse problems where $A$ may have a non-trivial kernel: We define the \emph{mean squared estimation error}\emph{(MSEE)} here as 
\begin{equation}\label{RGSURE}
\msee(\alpha) := \Exp \left[\sqnorm{\Pi (x^* - \hat{x}_{\alpha}(y))} \right],
\end{equation}
where $\Pi := A^+ A $ denotes the orthogonal projector onto the range of $A^*$ ( $M^+$ denotes the Pseudoinverse of $M$), and refer to the minimizer of $\msee(\alpha)$ as $\alphaGSURE$. Again, we replace $ \msee$ by an unbiased estimator to obtain 
\begin{equation}\label{defGSURE}
\alphaHatGSURE {\in} \argmin_{\alpha \geqslant 0} \gsure(\alpha,y) :=  \argmin_{\alpha \geqslant 0} \sqnorm{\xML(y) -  \hat{x}_\alpha(y)} - \sigma^2 \trace\left( (AA^*)^+ \right) + 2 \sigma^2 \text{gdf}_\alpha (y)
\end{equation}
with 
\begin{equation*}
\text{gdf}_\alpha (y) = \trace( (A A^*)^+ \nabla_y  A \hat{x}_\alpha (y)), \qquad \qquad \qquad 
\xML = A^+ y=A^* (AA^*)^+ y.
\end{equation*}
 If $A$ is non-singular, as it will be in the theoretical analysis and numerical experiments in this work, the above definition coincides with the classical one considered by Stein \cite{Stein1981}. 

Note that the main difference between the two risk functions MSPE and MSEE and their corresponding estimators PSURE and SURE is that they measure in image and domain of the ill-conditioned operator $A$, respectively. The second important observation here is that all parameter choice rules depend on the data $y$ and hence on the random errors $\varepsilon_1,\ldots,\varepsilon_m$. Therefore, $\alphaHatDP$, $\alphaHatPSURE$ and $\alphaHatGSURE$ are random variables, described in terms of their probability distributions. In the next section, we first investigate these distributions by a numerical simulation study in a simple inverse problem scenario using quadratic Tikhonov regularization. The results point to several problems of the presented parameter choice rules, in particular of \reply{SURE}, and motivate our further theoretical investigation in Section \ref{sec:Theory}. The theoretical results will be illustrated and supplemented by an exhaustive numerical study in Section \ref{sec:NumStudiesL2}.} Finally we extend the numerical investigation in Section \ref{sec:NumStudiesL1} to a sparsity-promoting LASSO-type regularization, for which we find a similar behaviour. Conclusions are given in Section \ref{sec:Conclusion}.

\section{Risk Estimators for Quadratic Regularization} \label{sec:NumSetup}

In the following we discuss the setup in the case of \reply{the simple} quadratic regularization functional $R(x)=\frac{1}{2} \Vert x \Vert^2$, i.e. we recover the well-known linear Tikhonov regularization scheme. The linearity can be used to simplify arguments and gain analytical insight in the next section. \reply{While the arguments presented can easily be extended to more general quadratic regularizations, this model already contains all important properties.}

\subsection{Singular System and Risk Representations} \label{subsec:SinSysQuadReg}

Considering a quadratic regularization  allows to analyze $\hat{x}_\alpha$ in a singular system of $A$ in a convenient way. Let $r=\rank(A)$, $q=\min(n,m)$. Let
\begin{equation*}
A =  U \Sigma V^*, \,\, \Sigma = \diag\left( \gamma_1,\ldots,\gamma_q \right) \in \R^{m \times n}, \,  \gamma_1  \geq\ldots\geq \gamma_r> 0,\, \gamma_{r+1}\ldots\gamma_m:=0
\end{equation*}
denote a singular value decomposition of $A$ with
$$
U=\left(u_1,\ldots,u_m\right)\in\R^{m\times m}, V=\left(v_1,\ldots,v_n\right)\in\R^{n\times n} \text{ unitary}.
$$

Defining 
\begin{equation}\label{spectral}
y_i = \left\langle u_i , y\right\rangle, \qquad x_i^* = \left\langle v_i , x^*\right\rangle,\qquad \tilde \varepsilon_i=\left\langle u_i,\varepsilon \right\rangle
\end{equation}
we can rewrite model \eqref{model} in its spectral form
\begin{equation}\label{modelSpectral}
	  y_i=\gamma_ix_i^*+\tilde\varepsilon_i,\,i=1\ldots q;\,\, y_i=\tilde\varepsilon_i,\,i=q+1\ldots m,
\end{equation}
where $\tilde\varepsilon_1,\ldots,\tilde\varepsilon_m$ are still \textit{i.i.d.} $\sim\mathcal{N}(0,\sigma^2)$.
\reply{All quantities considered in the following depend on $n$ or $m$. 
In particular, we have $A=A_{n,m}$,	$y=y_m$, $x^*=x^*_n$, $\gamma_i=\gamma_{n,m}$, $x_i^*=x_{i,n,m}^*$ and $\tilde{\varepsilon}_i=\tilde\varepsilon_{i,n,m}$. This dependence is made explicit in the statements of the results and technical assumptions for clarity but is dropped in the main text for ease of notation.
	Increasing $m$ corresponds to sampling from an equation such as \eqref{model} more finely, whereas an increase in $n$ increases the level of discretization of an operator $A_{\infty}$ (see section \ref{subsec:Setting}). In our asymptotic considerations both $n$ and $m$ tend to infinity.}\\
 We will express some more terms in the singular system that are frequently used throughout this paper. In particular, we have for $x_{ML}$, the regularized solution $\hat x_{\alpha}$ (dropping the dependence on $y$ below for notational simplicity) and its norm 
\begin{align}\label{eq:xhat}
x_{ML}&=A^+y=V \Sigma^+ U^* y, \text{ with } \Sigma^+=\diag(\frac 1 \gamma_1,\ldots,\frac 1 \gamma_r,0\ldots 0)\in\R^{n\times m}\notag\\
\hat x_{\alpha} &= (A^* A + \alpha I)^{-1} A^* y =: V \Sigma_\alpha^{+} U^* y, \quad \text{with} \quad \Sigma_\alpha^{+} = \diag\left( \frac{\gamma_i}{\gamma_i^2 + \alpha} \right)\in\R^{n\times m} \notag\\
\sqnorm{\hat{x}_\alpha}   &= \sum_{i=1}^m \frac{\gamma_i^2}{(\gamma_i^2 + \alpha)^2} y_i^2
\end{align}
as well as the residual and distance to the maximum likelihood estimate
\begin{align}
\sqnorm{A \hat{x}_\alpha -  y}  &= \sum_{i=1}^m \frac{\alpha^2}{(\gamma_i^2 + \alpha)^2} y_i^2.\label{dataFidSpectral}\\
\sqnorm{\xML - \hat{x}_\alpha} &= \sqnorm{A^*(A A^*)^+ y - (A^*A + \alpha I)^{-1} A^* y} = \sqnorm{ V \left(\Sigma^+ - \Sigma_\alpha^{+} \right) U^* y} \nonumber \\
&= \sum_{i=1}^r \left( \frac{1}{\gamma_i} - \frac{\gamma_i}{(\gamma_i^2 + \alpha)} \right)^2 y_i^2.\nonumber
\end{align}
Based on the generalized inverse we compute
\begin{align*}
(A A^*)^+ &= U (\Sigma \Sigma^*)^+ U^* = U \diag\left(\frac 1 {\gamma_1^2},\ldots,\frac 1 {\gamma_r^2},0,\ldots,0 \right) U^* \\
A^* (A A^*)^+ A &= V \diag( \underbrace{1,\ldots,1}_{r},\underbrace{0,\ldots,0}_{n-r}) V^*,
\end{align*}
which yields the degrees of freedom and the generalized degrees of freedom
\begin{align*}
\text{df}_\alpha &:= \trace ( \nabla_y \cdot A \hat{x}  ) = \trace\left( A (A^*A + \alpha I)^{-1} A^*\right) = \sum_{i=1}^r \frac{\gamma_i^2}{\gamma_i^2 + \alpha}\\ 
\text{gdf}_\alpha &:= \trace( (A A^*)^+ \nabla_y \cdot A \hat{x} ) = \trace\left((A A^*)^+ A (A^*A + \alpha I)^{-1} A^*\right) \nonumber\\
&= \trace((\Sigma \Sigma^*)^+ \Sigma \Sigma_\alpha^{-1}) = \sum_{i=1}^r \frac 1 {\gamma_i^2} \gamma_i \frac{\gamma_i}{\gamma_i^2 + \alpha} = \sum_{i=1}^r \frac{1}{\gamma_i^2 + \alpha}.\nonumber
\end{align*}
Next, we derive the spectral representations of the parameter choice rules. For the discrepancy principle, we use \eqref{dataFidSpectral} to define 
\begin{equation}
\DP(\alpha,y) := \sum_{i=1}^m\frac{\alpha^2}{(\gamma_i^2+\alpha)^2}y_i^2-m\sigma^2, \label{DPspectral}
\end{equation}
and now, \eqref{DP} can be restated as $\DP(\alphaHatDP,y) = 0$. For \eqref{PSURE} and \eqref{defGSURE}, we find
\begin{align}
\psure(\alpha,y) &= \sum_{i=1}^m \frac{\alpha^2}{(\gamma_i^2 + \alpha)^2} y_i^2 - m \sigma^2  + 2 \sigma^2 \sum_{i=1}^m \frac{\gamma_i^2}{\gamma_i^2 + \alpha} \label{PSUREspectral}\\
\gsure(\alpha,y) &= \sum_{i=1}^r \left( \frac{1}{\gamma_i} - \frac{\gamma_i}{\gamma_i^2 + \alpha} \right)^2 y_i^2 - \sigma^2 \sum_{i=1}^r \frac{1}{\gamma_i^2}  + 2 \sigma^2 \sum_{i=1}^r \frac{1}{\gamma_i^2 + \alpha} \label{GSUREspectral}.
\end{align}

\subsection{An Illustrative Example} \label{subsec:Setting}

We consider a simple imaging scenario which exhibits typical properties of inverse problems. The unknown function $x_\infty^*: [-1/2,1/2] \rightarrow \mathbb{R}$ is mapped to a function $y_\infty:[-1/2,1/2] \rightarrow \mathbb{R}$ by a periodic convolution with a compactly supported kernel of width $l \leq 1/2$:
\begin{equation*}
y_\infty(s) = A_{\infty,l} x_\infty^* := \int_{-\frac{1}{2}}^{\frac{1}{2}} k_l \left( s-t \right) x_\infty^*(t) \, \rmd t, \quad s \in [-1/2,1/2],
\end{equation*}
where  the 1-periodic $C_0^\infty(\R)$ function $k_l(t)$ is defined for $|t|\leq 1/2$ by
\begin{equation*}
k_l(t) := \frac{1}{N_l}
\begin{cases}
 \exp\left(-\frac{1}{1 - {t^2}/{l^2}} \right) &\text{if} \quad |t| < l\\
0 &l\leq |t| \leq 1/2
\end{cases}, \quad \qquad N_l = \int_{-l}^l \exp\left(-\frac{1}{1 - {t^2}/{l^2}} \right) \, \rmd t,
\end{equation*}
and continued periodically for $|t|> 1/2$. Examples of $k_l(t)$ are plotted in Figure \ref{subfig:Kernel}. The normalization ensures that $A_{\infty,l}$ and suitable discretizations thereof have the spectral radius $\gamma_1 = 1$ which simplifies our derivations and the corresponding illustrations. The  $x_\infty^*$ used in the numerical examples is the sum of four delta distributions:
\begin{align*}
x_\infty^*(t) := \sum_{i=1}^4 a_i \delta\left(b_i - \frac{1}{2}\right), \; \text{with} \;\; a = [0.5,1,0.8,0.5], \;\; b = \left[\frac{1}{\sqrt{26}},\frac{1}{\sqrt{11}},\frac{1}{\sqrt{3}},\frac{1}{\sqrt{3/2}}\right].
\end{align*}
The locations of the delta distributions approximate $[-0.3,-0.2,0.1,0.3]$ by irrational numbers which \reply{will simplify the discretization of this continuous problem}. 
\paragraph{Discretization}
For a given number $n \in \mathbb{N} $, let
\begin{equation*}
E_i^n := \left[\frac{i-1}{n}-\frac{1}{2},\frac{i}{n}-\frac{1}{2}\right], \qquad i=1,\ldots,n
\end{equation*}
denote the equidistant partition of $[-1/2,1/2]$ and $\psi_i^n(t) = \sqrt{n} \, \Indicator_{E^n_i}(t)$ an 
orthonormal basis  (ONB) of piecewise constant functions over that partition. If we use $m$ and $n$ degrees of freedom to discretize range and domain of $A_{\infty,l}$, respectively, we arrive at the discrete inverse problem \eqref{model} with
\begin{align}
\left(A_l\right)_{i,j} &= \left\langle \psi_i^m, A_{\infty,l} \psi_j^n \right\rangle = \sqrt{mn}\int_{E^m_i} \int_{E^n_j} k_l \left( s-t \right) \, \rmd t \, \rmd s \label{eq:DefA}\\
x^*_j &= \left\langle \psi_j^n, x_\infty^* \right\rangle = \sqrt{n} \int_{E^n_j} x_\infty^*(t) \, \rmd t = \sqrt{n} \sum_i^4 a_i \Indicator_{E^n_i}\delta\left(b_i - \frac{1}{2}\right)\nonumber
\end{align}
The two dimensional integration in \eqref{eq:DefA} is computed by the trapezoidal rule with equidistant spacing, employing $100 \times 100$ points to partition $E^m_i \times E^n_i$. Note that we drop the subscript $l$ from $A_l$ whenever the dependence on this parameter is not of importance for the argument being carried out. 

\begin{figure}[tb]
   \centering
\subfigure[][]{
\includegraphics[width=0.47 \textwidth]{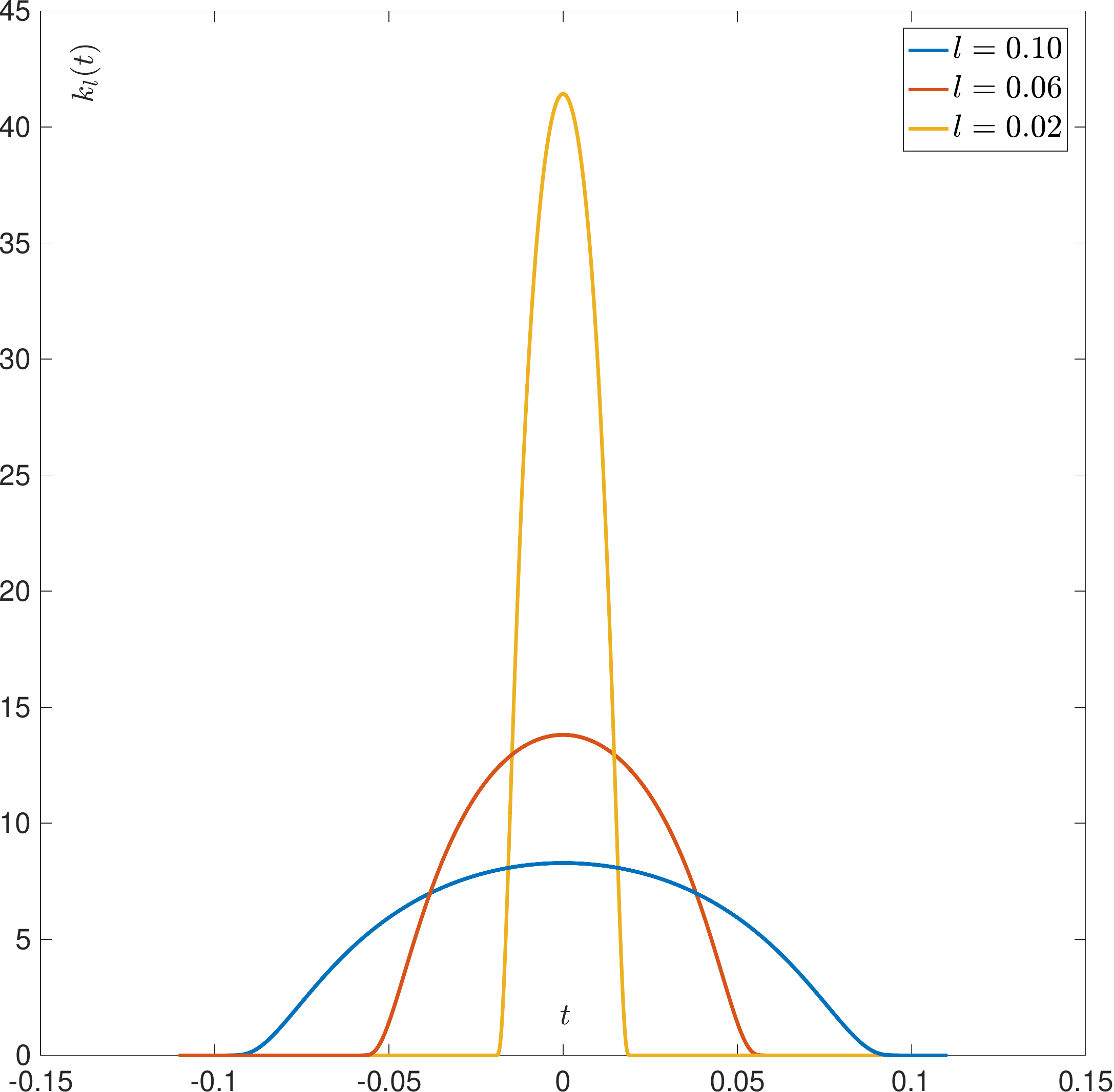} \label{subfig:Kernel}}
\subfigure[][]{
\includegraphics[width=0.46 \textwidth]{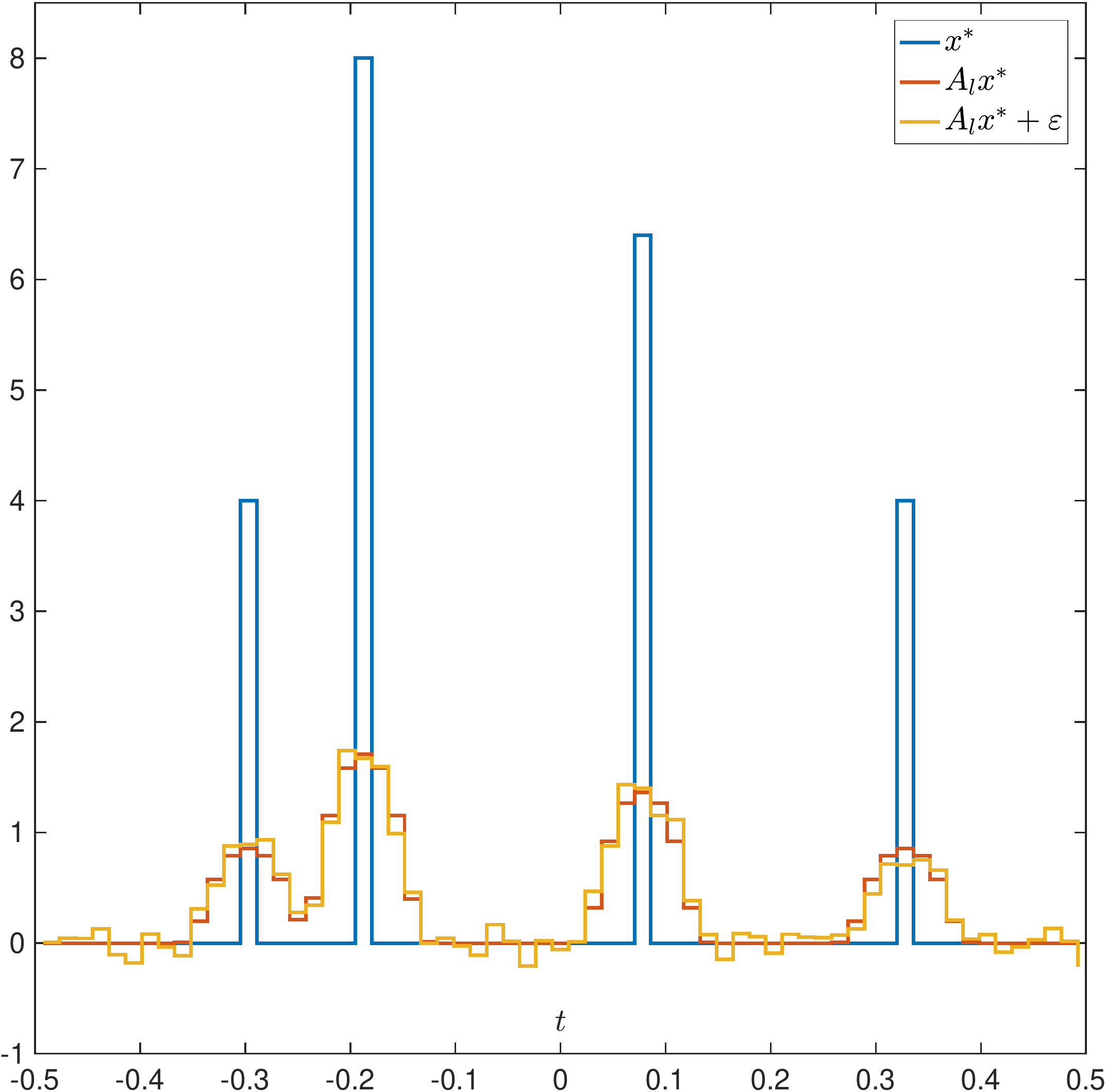}  \label{subfig:Variables}}
\caption{\subref{subfig:Kernel} The convolution kernel $k_l(t)$ for different values of $l$. \subref{subfig:Variables} True solution $x^*$, clean data $A_l x^*$ and noisy data $A_lx^* +\varepsilon$ for $m=n=64$, $l = 0.06$, $\sigma = 0.1$. \label{fig:Setting}
}
\end{figure}

As the convolution kernel $k_l$ has mass $1$ and the discretization was designed to be mass-preserving, we have $\gamma_1 = 1$ and the condition number of $A$ is given by $\cond(A) = 1/\gamma_r$, where $r = \text{rank}(A)$. Figure \ref{fig:SingularValues} shows the decay of the singular values for various parameter settings and Table \ref{tbl:CondA} lists the corresponding condition numbers: \reply{From this, we can see that the degree of ill-posedness of solving \eqref{model} measured in terms of the rate of decay of the singular values and the condition number grows very fast with increasing $m$ and $l$. It is easy to show that in the infinite dimensional setting, the rate of decay would be exponentially fast.} 

\begin{figure}[tb]
   \centering
\includegraphics[width=0.65 \textwidth]{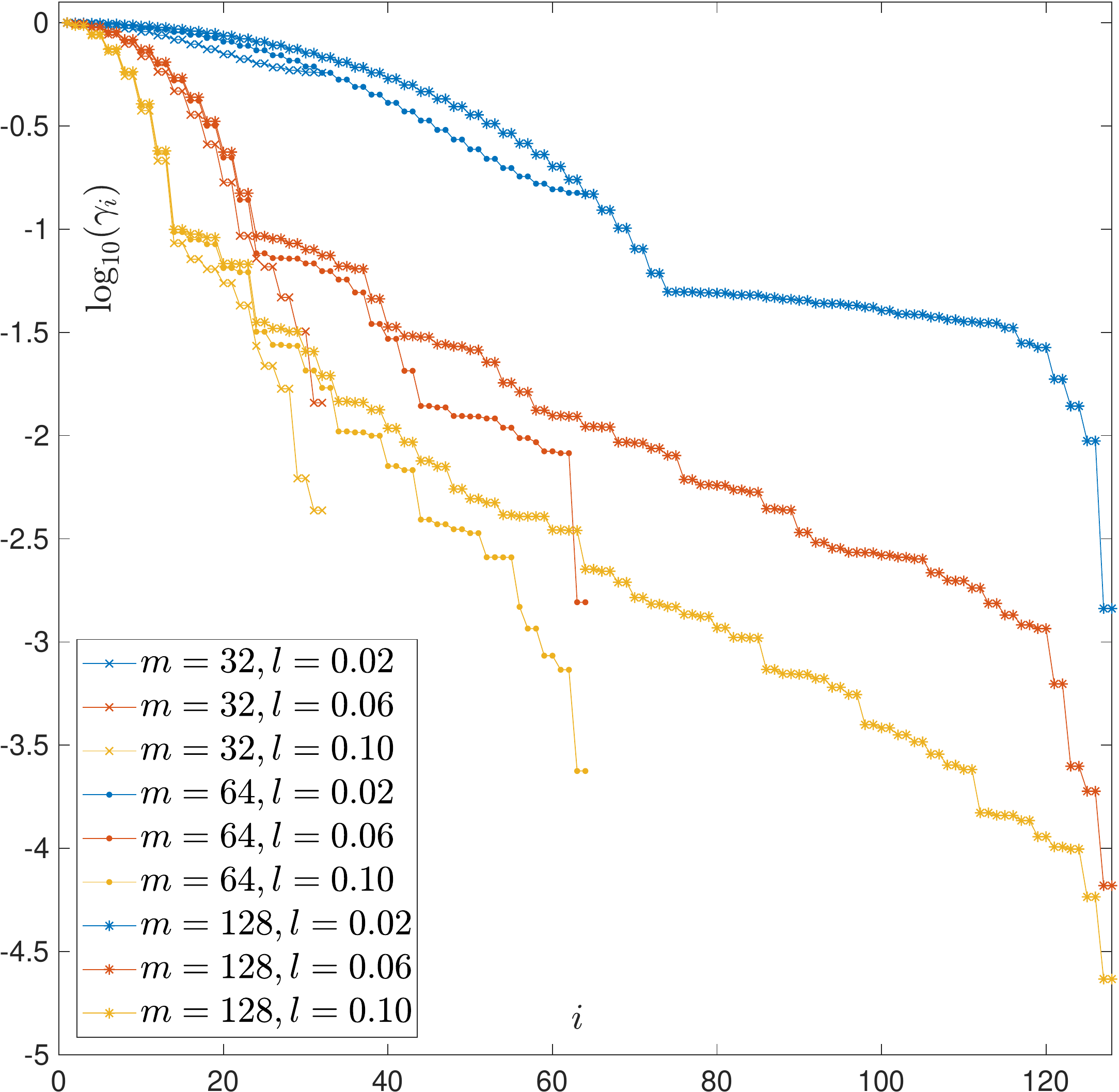}
\caption{Decay of the singular values $\gamma_i$ of $A_l$ for different choices of \reply{$m = n$} and $l$. As expected, increasing the width $l$ of the convolution kernel leads to a faster decay. For a fixed $l$, increasing $m$ corresponds to using a finer discretization and $\gamma_i$ converges to the corresponding singular value of $A_{\infty,l}$, as can be seen for the largest $\gamma_i$, e.g., for $l = 0.02$. \label{fig:SingularValues}}
\end{figure}

\begin{table}[tb]
\caption{Condition of $A_l$ computed different values of $m = n$ and $l$.}
\label{tbl:CondA}
  \centering
  \begin{tabular}{lrrrrr}
  \toprule
	&	$l = 0.02$ & $l=0.04$ & $l=0.06$ & $l=0.08$ & $l=0.1$\\
  \cmidrule{1-6}
$m=16$  & 1.27e+0 & 1.75e+0 & 2.79e+0 & 6.77e+0 & 2.31e+2\\
$m=32$  & 1.75e+0 & 6.77e+0 & 6.94e+1 & 6.88e+2 & 2.30e+2\\
$m=64$  & 6.77e+0 & 6.88e+2 & 6.42e+2 & 1.51e+3 & 4.22e+3\\
$m=128$ & 6.88e+2 & 1.51e+3 & 1.51e+4 & 4.29e+3 & 4.29e+4\\
$m=256$ & 1.70e+3 & 4.70e+4 & 1.87e+6 & 4.07e+6 & 1.79e+6\\
$m=512$ & 4.70e+4 & 1.11e+7 & 1.22e+7 & 2.12e+7 & 3.70e+7\\
\bottomrule
  \end{tabular} 
\end{table}


\paragraph{Empirical Distributions}

Using the above formulas and $m=n=64$, $l = 0.06$, $\sigma = 0.1$, we computed the empirical distributions of \reply{the $\alpha$ values selected} by the different parameter choice rules by evaluating \eqref{DPspectral}, \eqref{PSUREspectral} and \eqref{GSUREspectral} on a fine logarithmical $\alpha$-grid, i.e., $\log_{10}(\alpha_i)$ was increased linearly in between $-40$ and $40$ with a step size of $0.01$. We draw $N_\varepsilon = 10^6$ samples of $\varepsilon$. The results are displayed in Figures \ref{fig:Hist_n64l06N6} and \ref{fig:2DHist_n64l06N6}: In both figures, we use a logarithmic scaling of the empirical probabilities wherein empirical probabilities of $0$ have been set to $1/(2 N_\varepsilon)$. While this presentation complicates the comparison of the distributions as the probability mass is deformed, it facilitates the examination of small values and tails. 

First, we observe in Figure \ref{subfig:Alpha_n64l06N6} that $\alphaHatDP$ typically overestimates the optimal $\alpha^*$. However, it performs robustly and does not cause large $\ell_2$-errors as can be seen in Figure \ref{subfig:Err_n64l06N6}. For $\alphaHatPSURE$ and $\alphaHatGSURE$, the latter is not true: While being closer to $\alpha^*$ than $\alphaHatDP$ most often, and, as can be seen from the joint error histograms in Figure \ref{fig:2DHist_n64l06N6}, producing smaller $\ell_2$-errors \reply{more often (87\%/56\% of the time for PSURE/SURE)}, both distributions show outliers, i.e., occasionally, very small values of $\hat \alpha$ are estimated that cause large $\ell_2$-errors. In the case of $\alphaHatGSURE$, we even observe two clearly separated modes in the distributions. \reply{Table \ref{tbl:L2ErrStats} shows different statistics that summarize the described phenomena.} These findings motivate the theoretical examinations carried out in the following section.
 
\begin{figure}[tb]
   \centering
\subfigure[][]{\includegraphics[width= 0.49\textwidth]{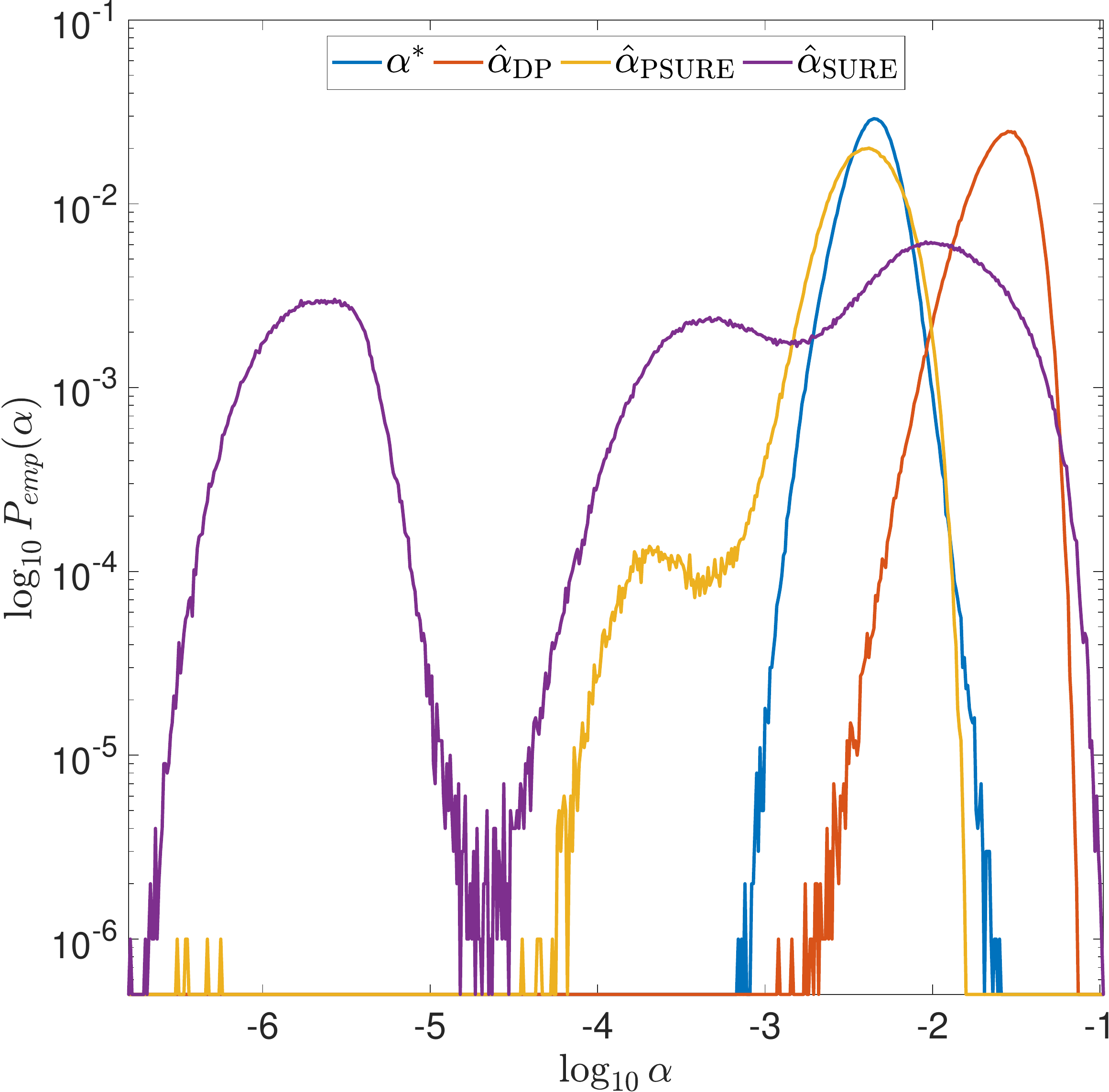}\label{subfig:Alpha_n64l06N6}}
\subfigure[][]{\includegraphics[width= 0.49\textwidth]{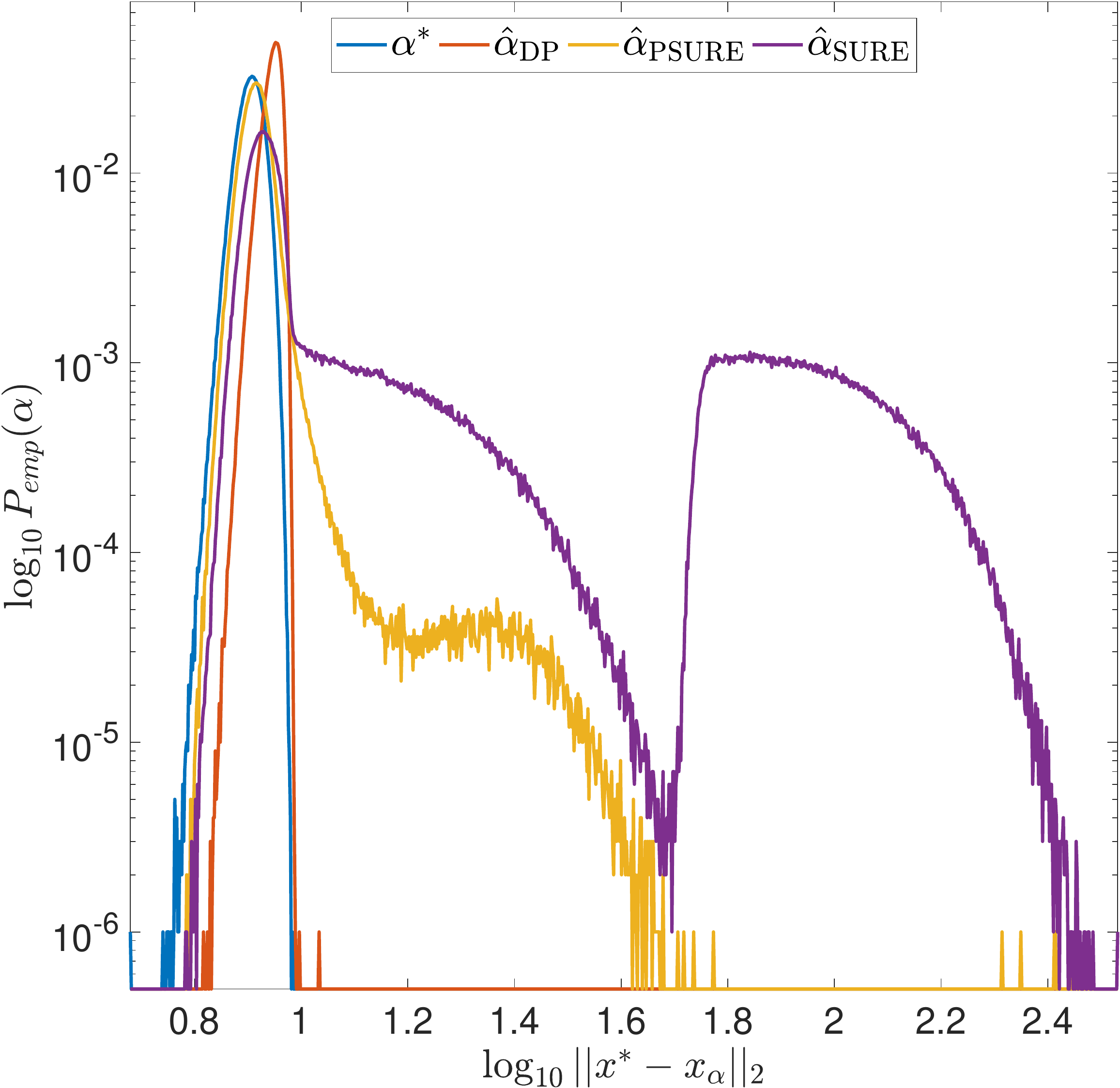}\label{subfig:Err_n64l06N6}}
\caption{Empirical probabilities of \subref{subfig:Alpha_n64l06N6} $\hat \alpha$ and \subref{subfig:Err_n64l06N6} the corresponding $\ell_2$-error for different parameter choice rules using $m=n=64$, $l = 0.06$, $\sigma = 0.1$ and $N_\varepsilon = 10^6$ samples of $\varepsilon$. \label{fig:Hist_n64l06N6}}
\end{figure}

\begin{figure}[tb]
   \centering
\subfigure[][Discrepancy principle vs PSURE]{\includegraphics[width= \textwidth]{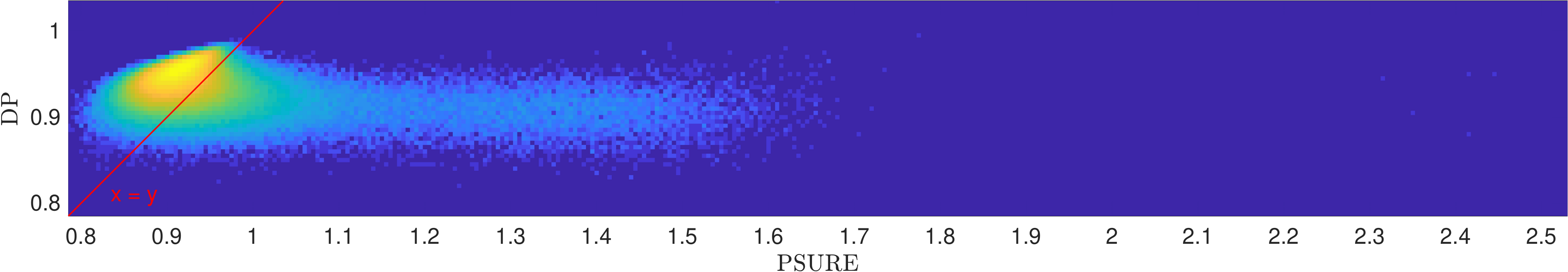}\label{subfig:DISvsPSURE2dHist}}
\subfigure[][Discrepancy principle vs SURE]{\includegraphics[width= \textwidth]{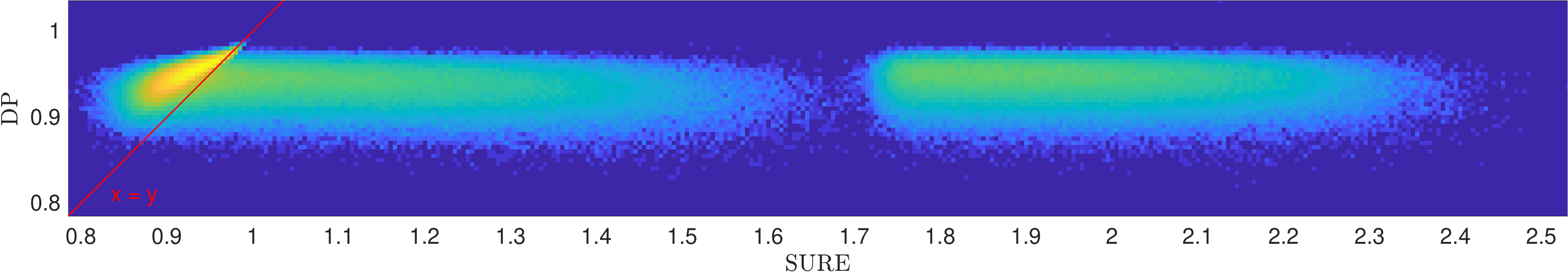}\label{subfig:DISvsGSURE2dHist}}
\caption{Joint empirical probabilities of $\log_{10}\|x^* - x_{\hat \alpha}\|_2$ using $m=n=64$, $l = 0.06$, $\sigma = 0.1$ and $N_\varepsilon = 10^6$ samples of $\varepsilon$ (the histograms in Figure \ref{subfig:Err_n64l06N6} are the marginal distributions thereof). As in Figure \ref{subfig:Err_n64l06N6}, the logarithms of the probabilities are displayed (here in form of a color-coding) to facilitate the identification of smaller modes and tails. The red line at $x=y$ divides the areas where one method performs better than the other: In \subref{subfig:DISvsPSURE2dHist}, all samples falling into the area on the right of the red line correspond to a noise realization where the discrepancy principle leads to a smaller error than PSURE. \reply{The percentage of samples for which that is true is 13\% for PSURE and 44\% for SURE.}
  \label{fig:2DHist_n64l06N6}}
\end{figure}

\begin{table}[tb]
\reply{
\caption{Statistics of the $\ell_2$-error $\|x^* - x_{\hat \alpha}\|_2$ for different parameter choice rules using $m=n=64$, $l = 0.06$, $\sigma = 0.1$ and $N_\varepsilon = 10^6$ samples of $\varepsilon$.}
\label{tbl:L2ErrStats}
  \centering
  \begin{tabular}{lrrrrr}
  \toprule
 & min & max & mean & median & std \\
  \cmidrule{1-6}
optimal  & 4.78  &  9.63  &  8.04  &  8.05  &  0.43 \\
DP  		  & 6.57  &  10.81  &  8.82  &  8.87  &  0.34 \\
PSURE  	  & 6.10  &  277.24  &  8.38  &  8.23  &  1.53 \\
SURE   & 6.08  &  339.80  &  27.71  &  8.95  &  37.26 \\
\bottomrule
  \end{tabular} 
  }
\end{table}

\clearpage

\section{Properties of the Parameter Choice Rules for Quadratic Regularization} \label{sec:Theory}

\begin{figure}[tb]
   \centering
\subfigure[][]{\includegraphics[width= 0.32\textwidth]{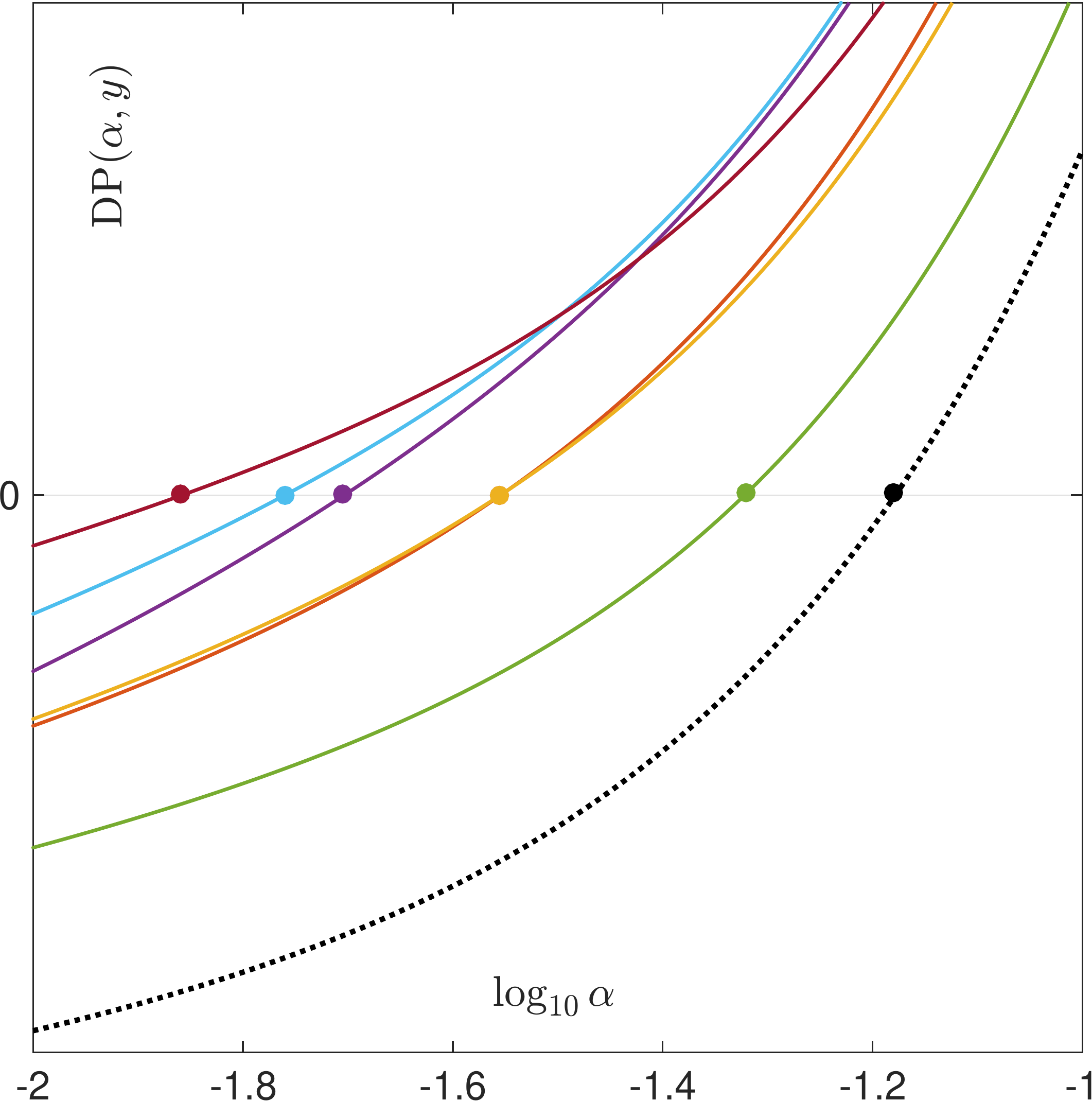}\label{subfig:L2DPrisk}}
\subfigure[][]{\includegraphics[width= 0.32\textwidth]{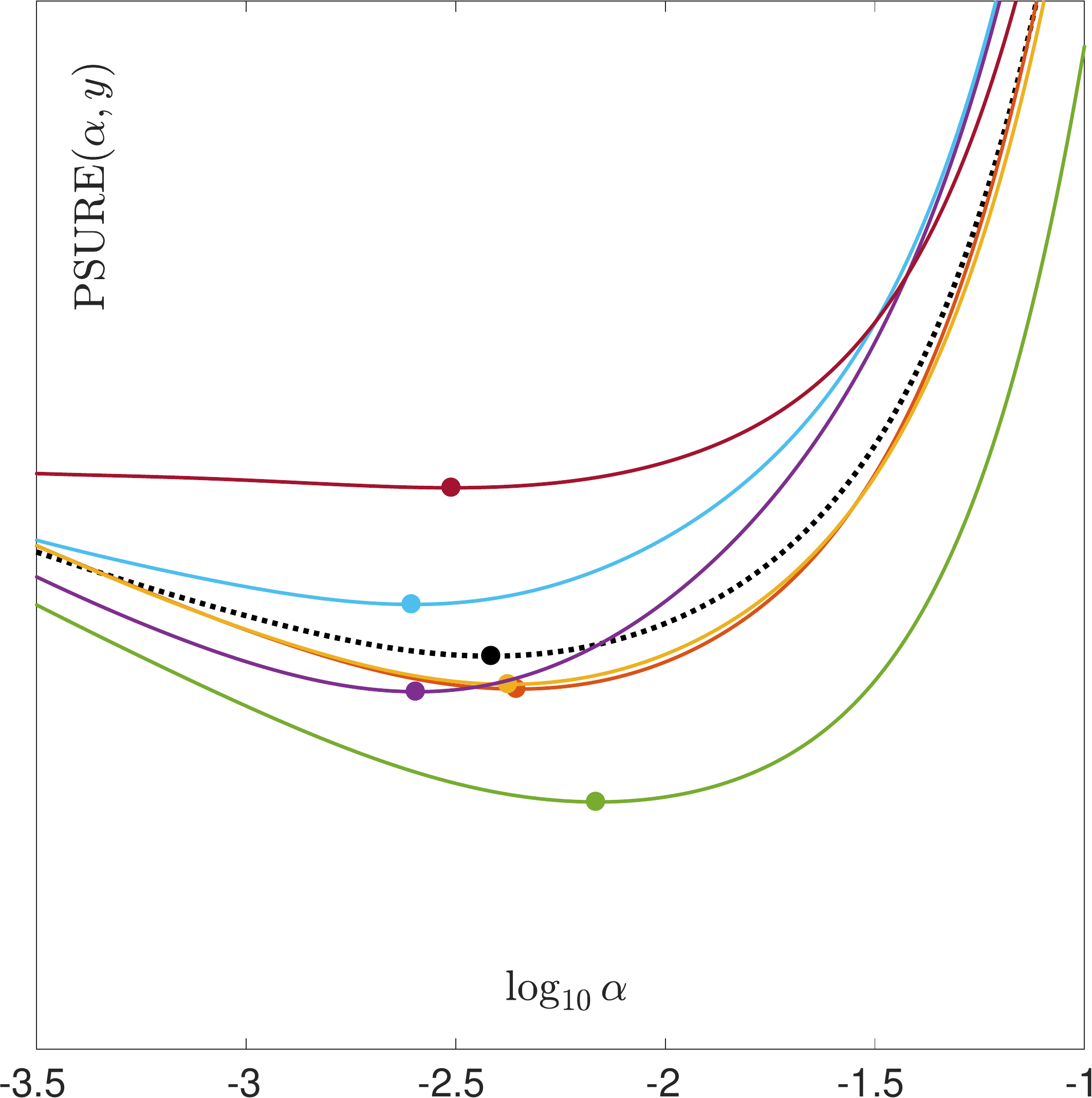}\label{subfig:L2PSURErisk}}
\subfigure[][]{\includegraphics[width= 0.32\textwidth]{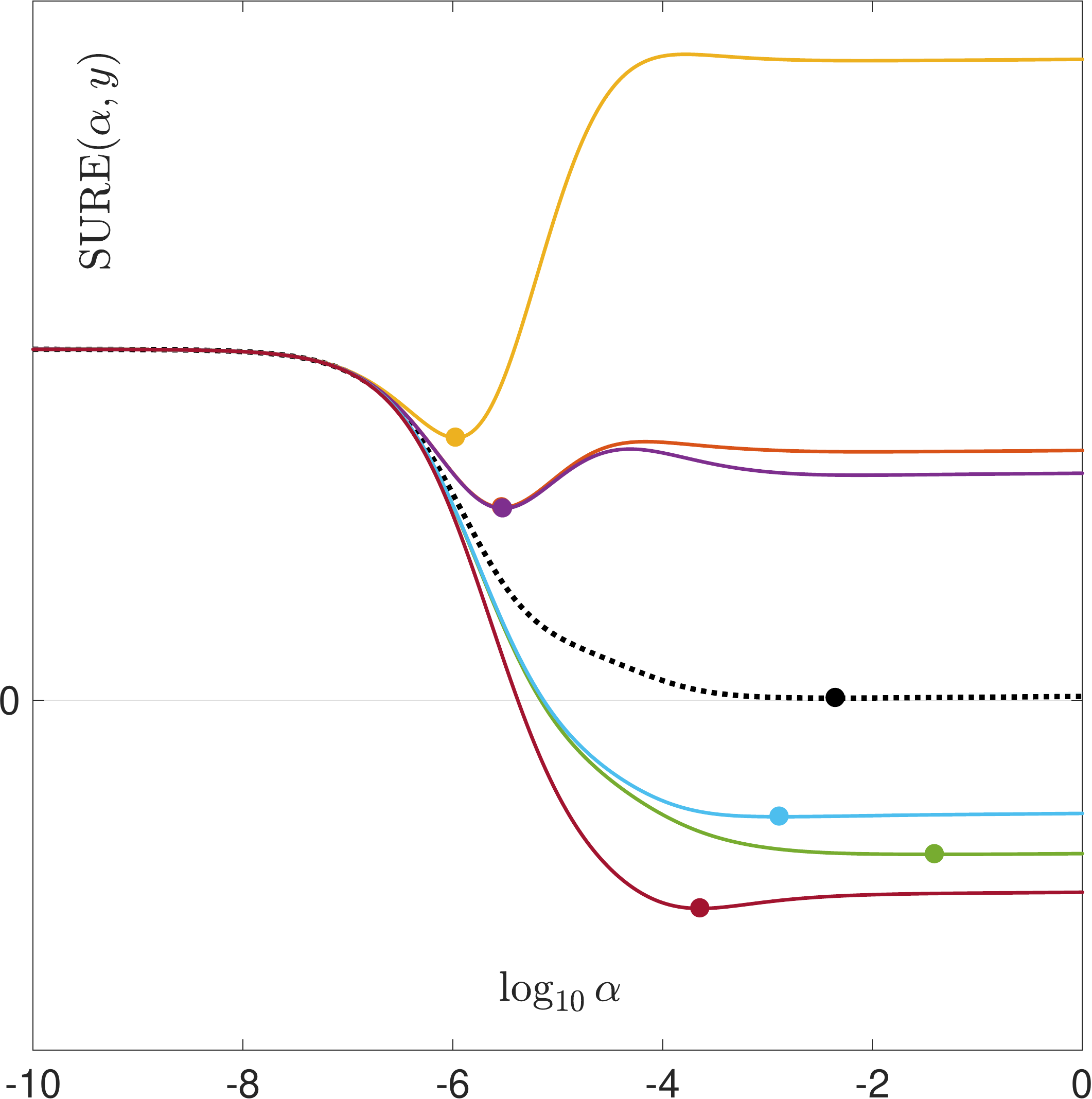}\label{subfig:L2GSURErisk}}
\caption{True risk functions (black dotted line), their estimates for six different realizations $y^k$, $k=1\ldots 6$ (solid lines), and their corresponding minima/roots (dots on the lines) in the setting described in Figure \ref{fig:Setting} using $\ell_2$-regularization: \subref{subfig:L2DPrisk} $\DP(\alpha,Ax^*)$ and $\DP(\alpha,y^k)$. \subref{subfig:L2PSURErisk} $\mspe(\alpha)$ and $\psure(\alpha,y^k)$. \subref{subfig:L2GSURErisk} $\msee(\alpha)$ and $\gsure(\alpha,y^k)$. \label{fig:L2RiskPlots}}
\end{figure}

In this section we consider the theoretical (risk) properties of PSURE, SURE and the discrepancy principle. 
\reply{To allow for a concise and accessible presentation of the main results, all proofs are shifted to Appendix \ref{sec:Proofs}. As we are investigating  random quantities, convergence rates are given in terms of the stochastic order symbols $O_{\mathbb{P}}$ and $o_{\mathbb{P}}$, which correspond to Landau's big $O$ and small $o$ notation, respectively, when convergence in probability is considered. Let us recall the definition of $O_{\mathbb{P}}$ and $o_{\mathbb{P}}$ using the formulation in \cite{vandeGeer2000}, Chapter 2.1.
\begin{Definition}[Stochastic Order Symbols]\label{def:OrderSymbols}
Let $(\Omega,\mathcal{F},\mathbb{P})$ a probability space. $Z_n:\Omega\to\mathbb{R}$, $n\in\mathbb{N},$ be a sequence of random variables, and $(r_n)_{n\in\mathbb{N}}$ be a sequence of positive numbers. We say that
\begin{align}\label{def:O}
Z_n=O_{\mathbb{P}}(r_n)\quad\text{if}\quad\lim_{T\to\infty}\limsup_{n\to\infty}\mathbb{P}\bigl(|Z_n|>Tr_n\bigr)=0.
\end{align}
We say that
\begin{align}\label{def:o}
Z_n=o_{\mathbb{P}}(r_n)\quad\text{if for all } \nu>0\quad\lim_{n\to\infty}\mathbb{P}\bigl(|Z_n|>\nu\, r_n\bigr)=0.
\end{align}
Instead of $Z_n=O_{\mathbb{P}}(r_n)$ or $Z_n=o_{\mathbb{P}}(r_n)$ we may also write $Z_n/r_n=O_{\mathbb{P}}(1)$ or $Z_n/r_n=o_{\mathbb{P}}(1)$, respectively.
\end{Definition}
}

\begin{Assumption}\label{Assumption}
For the sake of simplicity we only consider $m=n$ in this first analysis. Furthermore, we assume 
\begin{equation}\label{CondGamma}
  1=\gamma_{1,m}\geq\ldots\geq\gamma_{m,m}\geq0~.
\end{equation}
Note that all assumptions are fulfilled in the numerical example we described in the previous section.
\end{Assumption}
We mention that we consider here a rather moderate size of the noise, which remains bounded in variances as $m \rightarrow \infty$. A scaling corresponding to white noise in the infinite dimensional limit is rather  $\sigma^2 \sim m$ and an inspection of the estimates below shows that the risk estimate is potentially far from the expected values in such cases additionally.


\subsection{PSURE-Risk}

We start with an investigation of the \reply{PSURE} risk estimate. Based on \eqref{PSUREspectral} and Stein's result, the representation for the risk is given as
\begin{align}\label{e2}
\mspe(\alpha)&=\mathbb{E}[\psure(\alpha,y)]\notag\\
&=\sum_{i=1}^m \frac{\alpha^2}{(\gamma_i^2 + \alpha)^2} \mathbb{E}[y_i^2] - \sigma^2 m + 2 \sigma^2 \sum_{i=1}^m \frac{\gamma_i^2}{\gamma_i^2 + \alpha}\notag\\
&=\sum_{i=1}^m \frac{\alpha^2}{(\gamma_i^2 + \alpha)^2} (\gamma_i^2\cdot(x_i^{*})^2+\sigma^2)- \sigma^2 m + 2 \sigma^2 \sum_{i=1}^m \frac{\gamma_i^2}{\gamma_i^2 + \alpha}.
\end{align}

Figure \ref{subfig:L2PSURErisk} illustrates the typical shape of $\mspe(\alpha)$ and \reply{PSURE} estimates thereof. Following \reply{\cite{Li87,k1994} who considered the case $A=I_m$ and} \cite{Xie2012,Ghoreishi2014}, who investigated the performance of Stein's unbiased risk estimate in the different context of hierarchical modeling, we show that, with the definition of the loss $\mathcal{L}$ by

$$\mathcal{L}(\alpha):=\frac{1}{m}\|Ax^*-A\hat x_{\alpha}(y)\|_2^2,$$

$1/m \, \psure(\alpha,y)$ is close to $\mathcal{L}$ for large $m.$ Note that PSURE is an unbiased estimate of the expectation of $\mathcal{L}.$

\begin{Theorem}\label{PSUREl}
If Assumption \ref{Assumption} holds, then  \reply{ we have for any sequence of vectors  $(x^*_m)_{m\in\mathbb{N}}$, $x^*_m\in\R^m,$ such  that $\|x^*_m\|_2^2=O(m)$} as $m\rightarrow\infty$
\begin{equation*}
  \sup_{\alpha\in[0,\infty)}\Bigl|\frac{1}{m}\psurem(\alpha,y)-\mathcal{L}_m(\alpha)\Bigr|=O_\mathbb{P}\left(\frac{1}{\sqrt{m}}\right).
\end{equation*}
\end{Theorem}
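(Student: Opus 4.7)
The plan is to first express $\Delta_m(\alpha) := \tfrac{1}{m}\psurem(\alpha,y) - \mathcal{L}_m(\alpha)$ in the singular system of $A$. Writing $\mathcal{L}_m(\alpha) = \tfrac{1}{m}\sum_{i=1}^m (\alpha\gamma_i x_i^* - \gamma_i^2 \tilde\varepsilon_i)^2/(\gamma_i^2+\alpha)^2$ via $y_i = \gamma_i x_i^* + \tilde\varepsilon_i$ and expanding $\tfrac{1}{m}\psurem$ as in \eqref{PSUREspectral}, the deterministic $\alpha^2\gamma_i^2(x_i^*)^2/(\gamma_i^2+\alpha)^2$ contributions cancel, and combining the constant $-\sigma^2$ with the term $\tfrac{2\sigma^2}{m}\sum_i \gamma_i^2/(\gamma_i^2+\alpha)$ gives
\[
\Delta_m(\alpha) = T_1(\alpha) + T_2(\alpha), \qquad T_1(\alpha) := \frac{2}{m}\sum_{i=1}^m \frac{\alpha\gamma_i x_i^*}{\gamma_i^2+\alpha}\,\tilde\varepsilon_i, \qquad T_2(\alpha) := \frac{1}{m}\sum_{i=1}^m \frac{\alpha-\gamma_i^2}{\gamma_i^2+\alpha}\,(\tilde\varepsilon_i^2 - \sigma^2).
\]
Both $T_1, T_2$ are weighted sums of independent centered random variables, and the $\alpha$-dependent weights $f_i(\alpha) := \alpha/(\gamma_i^2+\alpha)$ and $g_i(\alpha) := (\alpha-\gamma_i^2)/(\gamma_i^2+\alpha)$ lie in $[0,1]$ respectively $[-1,1]$ for every $\alpha \geq 0$ and every $i$.

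\textbf{Step 2 (pointwise bound).} Using $|f_i|, |g_i| \leq 1$ uniformly, $\gamma_i \leq \gamma_1 = 1$ by Assumption \ref{Assumption}, $\|x_m^*\|_2^2 = O(m)$, and $\mathrm{Var}(\tilde\varepsilon_i^2) = 2\sigma^4$ for Gaussian errors, direct computation yields
\[
\mathrm{Var}(T_1(\alpha)) \leq \frac{4\sigma^2}{m^2}\sum_{i=1}^m \gamma_i^2(x_i^*)^2 \leq \frac{4\sigma^2\|x^*\|_2^2}{m^2} = O(1/m), \qquad \mathrm{Var}(T_2(\alpha)) \leq \frac{2\sigma^4}{m} = O(1/m).
\]
Chebyshev's inequality then yields $T_j(\alpha) = O_\mathbb{P}(1/\sqrt{m})$ for every fixed $\alpha \in [0,\infty)$.

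\textbf{Step 3 (uniform bound).} The core difficulty is upgrading the pointwise rate to the supremum over $\alpha \in [0,\infty)$. The $\alpha$-dependence enters only through the bounded monotone rational functions $f_i, g_i$, so after the reparameterization $\beta = \alpha/(1+\alpha) \in [0,1]$ the index set is compact. For $T_1$, which is a Gaussian process in $\alpha$ (conditionally on $x^*$), I would apply Dudley's entropy bound relative to the intrinsic pseudometric $d_1(\alpha,\alpha')^2 := \mathrm{Var}(T_1(\alpha)-T_1(\alpha'))$. The diameter of this pseudometric is $O(1/\sqrt{m})$ by Step 2, and because the index set is one-dimensional I expect the covering number to satisfy $N(\epsilon,d_1) = O\bigl(1/(\sqrt{m}\,\epsilon)\bigr)$; after the change of variables $u = \sqrt{m}\,\epsilon$ the entropy integral $\int_0^{C/\sqrt{m}}\sqrt{\log N(\epsilon,d_1)}\,d\epsilon$ evaluates to $O(1/\sqrt{m})$, which by Gaussian concentration of the supremum around its mean lifts to $\sup_\alpha |T_1(\alpha)| = O_\mathbb{P}(1/\sqrt{m})$. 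For $T_2$, which lies in the second Wiener chaos, a Hanson--Wright-type deviation inequality combined with the same chaining scheme applied to the analogous pseudometric gives $\sup_\alpha |T_2(\alpha)| = O_\mathbb{P}(1/\sqrt{m})$.

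\textbf{Main obstacle.} The delicate point is establishing the sharp one-dimensional covering bound on the intrinsic metric; a naive union bound over a polynomial grid in $\alpha$ combined with Bernstein-type tail bounds at each grid point only yields the weaker rate $O_\mathbb{P}(\sqrt{\log m/m})$, so one must exploit the monotone structure of the weights $f_i, g_i$ (or, following the line of argument in \cite{Li87,k1994}, work directly with the ordered singular system) to absorb the logarithmic factor.
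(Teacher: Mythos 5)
Your Step 1 algebra is correct and, after regrouping, is exactly the paper's decomposition: the paper writes $\tfrac1m\psurem(\alpha,y)-\mathcal{L}_m(\alpha)=Sl_1(\alpha)+Sl_2(\alpha)+Sl_3(\alpha)$, and your $T_2$ equals $Sl_1+Sl_2=\tfrac1m\sum_i\tfrac{\alpha-\gamma_i^2}{\gamma_i^2+\alpha}(\tilde\varepsilon_i^2-\sigma^2)$ while your $T_1$ equals $Sl_3$; your pointwise variance bounds in Step 2 also match. The genuine divergence, and the weak point, is Step 3. As written it is a plan rather than a proof: the covering bound $N(\epsilon,d_1)=O\bigl(1/(\sqrt m\,\epsilon)\bigr)$ is only ``expected'', and one-dimensionality of the index set by itself does not deliver it -- a one-parameter Gaussian process with pointwise variance $O(1/m)$ can still have large intrinsic path length, in which case Dudley does not return the diameter rate. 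What closes the gap is precisely the structure you flag as the main obstacle: each weight $f_i(\alpha)=\alpha/(\gamma_i^2+\alpha)$ is monotone in $\alpha$ with values in $[0,1]$, so for $\alpha>\alpha'$ one has $(f_i(\alpha)-f_i(\alpha'))^2\le f_i(\alpha)-f_i(\alpha')$, hence $d_1(\alpha,\alpha')^2\le\mu(\alpha)-\mu(\alpha')$ with $\mu(\alpha):=\tfrac{4\sigma^2}{m^2}\sum_i\gamma_i^2(x_i^*)^2f_i(\alpha)$ monotone of total range $O(1/m)$ (using $\gamma_i\le1$ and $\|x^*_m\|_2^2=O(m)$). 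This yields $N(\epsilon,d_1)=O(1/(m\epsilon^2))$, an entropy integral of order $1/\sqrt m$, and thus $\sup_\alpha|T_1(\alpha)|=O_\mathbb{P}(1/\sqrt m)$; the same trick applied to $g_i$, combined with a mixed-tail (Bernstein/Hanson--Wright) chaining bound, handles $T_2$ with only an additional $O(\log m/m)$ contribution from the subexponential part, which is harmless. So your route can be completed, but the decisive estimate must actually be carried out, not conjectured.

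For comparison, the paper avoids entropy and chaining altogether. For $T_2$ it uses that, for fixed $\alpha$, the weights $\gamma_i^2/(\gamma_i^2+\alpha)\in[0,1]$ are monotone in the index $i$, so Lemma 7.2 of Li (1985) turns the supremum over $\alpha$ into a maximum of partial sums of $\tilde\varepsilon_i^2-\sigma^2$, which Kolmogorov's maximal inequality bounds at rate $\sqrt m$ without any logarithmic loss. For $T_1$ the same device works on $\alpha\ge1$ (where the weights $\alpha\gamma_i/(\gamma_i^2+\alpha)$ are again monotone in $i$), and on $\alpha\in[0,1]$, where that monotonicity can fail, the paper uses the Sudakov--Fernique comparison with the explicit dominating process $\tfrac{4\sqrt\alpha}{m}\sum_i x_i^*\tilde\varepsilon_i$, whose expected supremum is $O(\sigma\|x^*\|_2/m)=O(1/\sqrt m)$. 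Both arguments ultimately exploit the same monotone structure; the paper converts it into maximal inequalities (elementary, and exactly log-free), whereas your version converts it into covering-number control (heavier machinery, but it treats all of $[0,\infty)$ for the Gaussian term in one stroke and needs no splitting of the $\alpha$-range).
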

\medskip
\reply{
\begin{Remark}
The result of Theorem \ref{PSUREl} guarantees stochastic boundedness of the sequence
\begin{equation*}
\biggl(\sqrt{m}\sup_{\alpha\in[0,\infty)}\Bigl|\frac{1}{m}\psurem(\alpha,y)-\mathcal{L}_m(\alpha)\Bigr|\biggr)_{m\in\mathbb{N}}.
\end{equation*}
It does not entail the existence of a proper weak limit of \,$\psure$, which would require stronger assumptions on the sequences $(x^*_m)_{m\in\mathbb{N}}$ and $\big((\gamma_{i,m})_{i=1}^m\big)_{m\in\mathbb{N}}.$
\end{Remark}}
The latter result can be used to show that, in an asymptotic sense, if the loss $\mathcal{L}$
 is considered, the estimator $\alphaHatPSURE$ does not have a larger risk than any other choice of regularization parameter. This statement is made precise in the following corollary.

\begin{Corollary}\label{CorPSUREl} Let $(\delta_m)_{m\in\mathbb{N}}$ be  a sequence of positive real numbers  such that $1/\delta_m=o(\sqrt{m})$.
 \reply{Under the assumptions of Theorem \ref{PSUREl} }the following holds true for any sequence of positive real numbers $(\alpha_m)_{m\in\mathbb{N}}$:
\begin{align*}
  \mathbb{P}\big(\mathcal{L}(\hat\alpha_{\psure,m})\geq \mathcal{L}_m(\alpha_m)+\delta_m\big)\rightarrow0.
\end{align*}
\end{Corollary}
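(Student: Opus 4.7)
The plan is to derive Corollary 1 as a direct oracle-type consequence of the uniform bound established in Theorem 1, via a standard two-sided sandwich argument exploiting that $\hat\alpha_{\psure,m}$ minimizes $\psure_m(\cdot,y)$.

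First, I will introduce the shorthand $R_m := \sup_{\alpha\in[0,\infty)}\bigl|\tfrac{1}{m}\psure_m(\alpha,y)-\mathcal{L}_m(\alpha)\bigr|$, which by Theorem \ref{PSUREl} satisfies $R_m=O_\mathbb{P}(1/\sqrt{m})$. For any sequence $(\alpha_m)_{m\in\mathbb{N}}$ of positive regularization parameters, I apply the uniform bound twice together with the defining property of the PSURE-minimizer to obtain, almost surely,
\begin{equation*}
\mathcal{L}_m(\hat\alpha_{\psure,m}) \;\leq\; \tfrac{1}{m}\psure_m(\hat\alpha_{\psure,m},y)+R_m \;\leq\; \tfrac{1}{m}\psure_m(\alpha_m,y)+R_m \;\leq\; \mathcal{L}_m(\alpha_m)+2R_m .
\end{equation*}
Hence the event $\{\mathcal{L}_m(\hat\alpha_{\psure,m})\geq \mathcal{L}_m(\alpha_m)+\delta_m\}$ is contained in $\{2R_m\geq\delta_m\}$, so it suffices to bound the latter.

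Next, I rewrite this event in the scale in which Theorem \ref{PSUREl} gives tightness: $\{2R_m\geq\delta_m\}=\{\sqrt{m}\,R_m\geq \tfrac{1}{2}\sqrt{m}\,\delta_m\}$. By the stochastic order statement of Theorem \ref{PSUREl}, the sequence $(\sqrt{m}\,R_m)_{m\in\mathbb{N}}$ is stochastically bounded, i.e.\ for every $\varepsilon>0$ there exists $T=T(\varepsilon)>0$ with $\mathbb{P}(\sqrt{m}\,R_m>T)<\varepsilon$ for all $m$ sufficiently large. The assumption $1/\delta_m=o(\sqrt{m})$ is exactly $\sqrt{m}\,\delta_m\to\infty$, which allows me, for any such $T$, to choose $m$ large enough that $\tfrac{1}{2}\sqrt{m}\,\delta_m>T$. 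Combining these observations gives $\mathbb{P}(2R_m\geq \delta_m)\leq\mathbb{P}(\sqrt{m}\,R_m\geq T)<\varepsilon$ eventually, and since $\varepsilon>0$ was arbitrary the desired convergence to $0$ follows.

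I do not expect a serious obstacle here: once the uniform rate of Theorem \ref{PSUREl} is in hand, the result is a short, textbook-style conversion of stochastic boundedness into an oracle inequality, and the interplay between the rates $1/\sqrt{m}$ and $\delta_m$ is precisely what the condition $1/\delta_m=o(\sqrt{m})$ controls. The only mild subtlety to state carefully is that the sandwich chain is pathwise/deterministic once $R_m$ is defined, so no measurability issues arise with the (possibly non-unique) minimizer $\hat\alpha_{\psure,m}$ as long as it is a measurable selection.
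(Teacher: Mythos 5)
Your proposal is correct and follows essentially the same route as the paper's own proof: both exploit the minimizing property $\psure(\hat\alpha_{\psure,m},y)\leq\psure(\alpha_m,y)$ to reduce the event to $\bigl\{2\sup_{\alpha}\bigl|\mathcal{L}_m(\alpha)-\tfrac{1}{m}\psurem(\alpha,y)\bigr|\geq\delta_m\bigr\}$, and then invoke Theorem \ref{PSUREl} together with $1/\delta_m=o(\sqrt m)$ to send its probability to zero. The only cosmetic difference is that you unpack the $O_\mathbb{P}$-to-$o_\mathbb{P}(\delta_m)$ conversion explicitly via a tightness constant $T(\varepsilon)$, whereas the paper cites the definition of $o_\mathbb{P}$ directly with $\nu=1/2$.
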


We finally mention that our estimates are rather conservative, in particular with respect to the quantity $Sl_3(\alpha)$ in the proof of Theorem \ref{PSUREl}, since we do not assume particular smoothness of $x^*$. With an additional source condition, i.e., certain decay speed of the $x_i^*$, it is possible to derive improved rates, which are however beyond the scope of our paper. \reply{We refer to \cite{cg2014} and \cite{lw2017} for recent results in that direction, where optimality of $x_{\alphaHatPSURE}$ with respect to the risk $\msee$ under source conditions for spectral cut-off and more general, filter based methods are shown, respectively.}
 We  turn our attention to the convergence of the risk estimate as $m \rightarrow \infty$ as well as the convergence of the estimated regularization parameters.


	
\begin{Theorem}\label{PSURER}
	If Assumption \ref{Assumption} holds, then \reply{ we have for any sequence of vectors  $(x^*_m)_{m\in\mathbb{N}}$, $x^*_m\in\R^m,$ such  that $\|x^*_m\|_2^2=O(m)$} as $m\rightarrow\infty$
\begin{equation*}
    \sup_{\alpha\in[0,\infty)}\Bigl|\frac{1}{m}\bigl(\psurem(\alpha,y)-\mspem(\alpha,y)\bigr)\Bigr|=O_\mathbb{P}\Bigl(\frac{1}{\sqrt{m}}\Bigr)
 \end{equation*}
	and
\begin{equation}\label{ExpSupPSURE}
    \mathbb{E}\biggl(\sup_{\alpha\in[0,\infty)}\Bigl|\frac{1}{m}\bigl(\psurem(\alpha,y)-\mspem(\alpha,y)\bigr)\Bigr|\biggr)^2=O\Bigl(\frac{1}{m}\Bigr).
\end{equation}
\end{Theorem}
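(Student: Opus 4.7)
The plan is to write $\psurem(\alpha,y)-\mspem(\alpha,y)$ explicitly as a weighted sum of independent centered random variables and then reduce the supremum over $\alpha\in[0,\infty)$ to a maximum of partial sums via a layer-cake identity that exploits the monotone structure of the weights. Combining \eqref{PSUREspectral} with the second line of \eqref{e2},
\begin{equation*}
\psurem(\alpha,y)-\mspem(\alpha,y) \;=\; \sum_{i=1}^m w_i(\alpha)\,\xi_{i,m}, \qquad w_i(\alpha):=\frac{\alpha^2}{(\gamma_{i,m}^2+\alpha)^2}\in[0,1],
\end{equation*}
where $\xi_{i,m}:=y_i^2-\Exp[y_i^2]=2\gamma_{i,m}x_{i,m}^*\tilde\varepsilon_{i,m}+(\tilde\varepsilon_{i,m}^2-\sigma^2)$ are independent, centered, and satisfy $\Var(\xi_{i,m})=4\gamma_{i,m}^2(x_{i,m}^*)^2\sigma^2+2\sigma^4$. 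Assumption \ref{Assumption} (notably $\gamma_{i,m}\leq 1$) together with $\|x_m^*\|_2^2=O(m)$ then yields $\sum_{i=1}^m \Var(\xi_{i,m})=O(m)$.

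The uniformity in $\alpha$ is handled by the layer-cake representation $w_i(\alpha)=\int_0^1 \Indicator_{\{u\leq w_i(\alpha)\}}\,du$, which gives
\begin{equation*}
\sup_{\alpha\geq 0}\Bigl|\sum_{i=1}^m w_i(\alpha)\xi_{i,m}\Bigr|\;\leq\;\int_0^1 \sup_{\alpha\geq 0}\Bigl|\sum_{i=1}^m \xi_{i,m}\,\Indicator_{\{w_i(\alpha)\geq u\}}\Bigr|\,du.
\end{equation*}
The condition $w_i(\alpha)\geq u$ is equivalent to $\gamma_{i,m}^2\leq \alpha(1-\sqrt u)/\sqrt u$, so by the decreasing ordering of the singular values in Assumption \ref{Assumption}, the set $\{i:w_i(\alpha)\geq u\}$ is always of the form $\{k+1,\ldots,m\}$ for some $k\in\{0,\ldots,m\}$, and every such set is attained as $\alpha$ ranges over $[0,\infty)$. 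The inner supremum is therefore independent of $u$ and equals $\max_{0\leq k\leq m}\bigl|\sum_{i=k+1}^m \xi_{i,m}\bigr|$.

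Setting $T_k:=\sum_{i=m-k+1}^m \xi_{i,m}$ defines a mean-zero martingale in its natural filtration, and Doob's $L^2$-maximal inequality yields
\begin{equation*}
\Exp\Bigl[\max_{0\leq k\leq m}T_k^2\Bigr]\;\leq\;4\sum_{i=1}^m \Var(\xi_{i,m})\;=\;O(m).
\end{equation*}
Dividing by $m^2$ delivers \eqref{ExpSupPSURE}, and the $O_{\mathbb{P}}(1/\sqrt m)$ rate follows from Markov's inequality. The main obstacle is the uniformity over the unbounded parameter range $\alpha\in[0,\infty)$; the layer-cake plus monotonicity reduction circumvents any need for chaining or covering arguments by converting the continuous supremum into a discrete martingale maximum of size $m+1$, on which classical second-moment inequalities give the optimal rate.
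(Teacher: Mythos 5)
Your proposal is correct and follows essentially the same route as the paper: the identical decomposition $\psurem-\mspem=\sum_i \tfrac{\alpha^2}{(\gamma_{i,m}^2+\alpha)^2}(y_i^2-\Exp[y_i^2])$, reduction of the supremum over $\alpha$ to a maximum of tail partial sums via the monotone ordering of the weights, and a martingale maximal inequality with $\sum_i\Var(y_i^2)=O(m)$. The only cosmetic differences are that you derive the monotone-weights reduction by a layer-cake argument instead of citing Li (1985, Lemma 7.2), and you obtain the $O_{\mathbb{P}}(1/\sqrt m)$ statement from the Doob $L^2$ bound via Markov rather than from a separate application of Kolmogorov's maximal inequality.
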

\reply{
\begin{Remark}
It follows from Theorem \ref{PSURER} and Definition \ref{def:OrderSymbols} that
\begin{equation*}
\mathbb{P}\bigg(\sup_{\alpha\in[0,\infty)}\Bigl|\frac{1}{m}\bigl(\psurem(\alpha,y)-\mspem(\alpha,y)\bigr)\Bigr|>\nu \frac{\log(m)}{\sqrt{m}}\bigg)=0\quad\text{for all}\quad \nu>0,
\end{equation*}
whereas $\mspem(\alpha,y)$ is bounded away from zero by the assumptions of Theorem \ref{Thm:alphaconv}. Therefore, asymptotically, minimizing $\mspem$ is the same as minimizing $\psure.$
\end{Remark}
}
\medskip

In order to understand the behaviour of the estimated regularization parameters we start with some bounds on $\alphaPSURE$, which recover a standard property of deterministic Tikhonov-type regularization methods, namely that $\frac{\sigma^2}\alpha$ does not diverge for suitable parameter choices (cf. \cite{engl1996regularization}).

\begin{Lemma} \label{alphasurelemma}
A regularization parameter $\hat\alpha^*_{\psure,m}$ obtained from $\mspem$ satisfies
\begin{equation*}
\frac{\sigma^2}{\max_{1\leq i\leq m} \vert x_{i,m}^*  \vert^2} \leq \hat\alpha^*_{\psure,m}  \leq \max\bigg\{1,8 \sigma^2 \frac{\sum \gamma_{i,m}^4}{\sum \gamma_{i,m}^4 (x_{i,m}^*)^2}  \bigg\}
\end{equation*}
\end{Lemma}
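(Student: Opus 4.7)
The plan is to work directly with the first-order optimality condition for $\mspem$. First I would rewrite the risk \eqref{e2} in standard bias--variance form by using the algebraic identity
\begin{equation*}
\frac{\alpha^2}{(\gamma_i^2+\alpha)^2}=1-\frac{2\gamma_i^2}{\gamma_i^2+\alpha}+\frac{\gamma_i^4}{(\gamma_i^2+\alpha)^2},
\end{equation*}
which makes the linear $-\sigma^2 m$ and $2\sigma^2\sum\gamma_i^2/(\gamma_i^2+\alpha)$ contributions telescope and yields the clean representation
\begin{equation*}
\mspem(\alpha)=\sum_{i=1}^m\frac{\alpha^2\gamma_{i,m}^2(x_{i,m}^*)^2}{(\gamma_{i,m}^2+\alpha)^2}+\sigma^2\sum_{i=1}^m\frac{\gamma_{i,m}^4}{(\gamma_{i,m}^2+\alpha)^2}.
\end{equation*}
From here, differentiating term-by-term gives
\begin{equation*}
\frac{d}{d\alpha}\mspem(\alpha)=2\alpha\sum_{i=1}^m\frac{\gamma_{i,m}^4(x_{i,m}^*)^2}{(\gamma_{i,m}^2+\alpha)^3}-2\sigma^2\sum_{i=1}^m\frac{\gamma_{i,m}^4}{(\gamma_{i,m}^2+\alpha)^3},
\end{equation*}
and a brief check of the boundary behaviour (derivative strictly negative at $\alpha=0$ under $\gamma_1=1$, strictly positive as $\alpha\to\infty$) guarantees that any interior minimizer $\hat\alpha^*_{\psure,m}$ satisfies
\begin{equation*}
\hat\alpha^*_{\psure,m}\sum_{i=1}^m\frac{\gamma_{i,m}^4(x_{i,m}^*)^2}{(\gamma_{i,m}^2+\hat\alpha^*_{\psure,m})^3}=\sigma^2\sum_{i=1}^m\frac{\gamma_{i,m}^4}{(\gamma_{i,m}^2+\hat\alpha^*_{\psure,m})^3}. \tag{$\star$}
\end{equation*}

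For the lower bound I would bound the bias term above by replacing $(x_{i,m}^*)^2$ with $\max_i|x_{i,m}^*|^2$ in $(\star)$. Since $\gamma_{1,m}=1$ guarantees that the common factor $\sum_i \gamma_{i,m}^4/(\gamma_{i,m}^2+\hat\alpha^*_{\psure,m})^3$ is strictly positive, cancelling it yields $\hat\alpha^*_{\psure,m}\max_i|x_{i,m}^*|^2\geq\sigma^2$, which is the desired inequality.

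For the upper bound I would split on whether $\hat\alpha^*_{\psure,m}\leq 1$ (in which case the bound $\max\{1,\ldots\}$ is trivially satisfied) or $\hat\alpha^*_{\psure,m}> 1$. In the latter case, Assumption~\ref{Assumption} gives $\gamma_{i,m}^2\leq 1<\hat\alpha^*_{\psure,m}$, so one has the two-sided estimate $\hat\alpha^*_{\psure,m}\leq \gamma_{i,m}^2+\hat\alpha^*_{\psure,m}\leq 2\hat\alpha^*_{\psure,m}$ for every $i$. Using the lower bound $(\gamma_{i,m}^2+\hat\alpha^*_{\psure,m})^3\leq 8\hat\alpha^{*\,3}_{\psure,m}$ on the left of $(\star)$ and the upper bound $1/(\gamma_{i,m}^2+\hat\alpha^*_{\psure,m})^3\leq 1/\hat\alpha^{*\,3}_{\psure,m}$ on the right, one obtains
\begin{equation*}
\frac{1}{8\hat\alpha^{*\,2}_{\psure,m}}\sum_{i}\gamma_{i,m}^4(x_{i,m}^*)^2\leq \frac{\sigma^2}{\hat\alpha^{*\,3}_{\psure,m}}\sum_{i}\gamma_{i,m}^4,
\end{equation*}
which rearranges to the claimed bound after multiplying by $8\hat\alpha^{*\,2}_{\psure,m}$ and dividing by $\sum_i\gamma_{i,m}^4(x_{i,m}^*)^2$.

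I do not anticipate a substantial obstacle, as everything reduces to manipulating $(\star)$. The only minor nuisances are handling the degenerate cases (ensuring that the denominator $\sum_i\gamma_{i,m}^4(x_{i,m}^*)^2$ in the upper bound is nonzero, which is implicit in the statement, and that the minimizer is attained on $(0,\infty)$ rather than at the boundary, which follows from the derivative calculation above). Everything else is an application of the elementary inequality $\gamma_{i,m}^2\leq 1$ in conjunction with $(\star)$.
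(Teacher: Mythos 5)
Your proposal is correct and takes essentially the same route as the paper: both arguments rest on the explicit formula $\mspe'(\alpha)=2\sum_{i=1}^m \gamma_{i,m}^4\bigl(\alpha (x_{i,m}^*)^2-\sigma^2\bigr)/(\gamma_{i,m}^2+\alpha)^3$ together with the elementary bounds $\gamma_{i,m}^2\le 1$ and $\gamma_{i,m}^2+\alpha\le 2\alpha$ for $\alpha\ge 1$, yielding the same factor $8$. The only cosmetic difference is that you evaluate the stationarity equation at an interior minimizer (hence your extra boundary check), whereas the paper argues directly from the sign of $\mspe'$ — strictly negative below the lower threshold, nonnegative above the upper one — which bounds any minimizer without separately establishing interiority.
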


From a straight-forward estimate of the derivative of $\mspem$ on sets where $\alpha$ is bounded away from zero, together with the Arzela-Ascoli theorem  we obtain the following result:
%
\begin{Proposition} \label{fmproposition}
The sequence of functions $f_m: \alpha \mapsto \frac{1}m \mspem(\alpha)$ is equicontinuous on sets $[C_1,C_2]$ with $0 < C_1 < C_2$ and hence has a uniformly convergent subsequence $f_{m_k}$ with continuous limit function $f$.   
\end{Proposition}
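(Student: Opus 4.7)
The starting point is the explicit formula \eqref{e2} for $\mspem(\alpha)$, from which $f_m(\alpha)=\frac{1}{m}\mspem(\alpha)$ can be differentiated term-by-term. A direct calculation gives
\begin{equation*}
\frac{d}{d\alpha}\frac{\alpha^2}{(\gamma_i^2+\alpha)^2}=\frac{2\alpha\gamma_i^2}{(\gamma_i^2+\alpha)^3},\qquad \frac{d}{d\alpha}\frac{\gamma_i^2}{\gamma_i^2+\alpha}=-\frac{\gamma_i^2}{(\gamma_i^2+\alpha)^2},
\end{equation*}
so that
\begin{equation*}
\mspem'(\alpha)=\sum_{i=1}^m\frac{2\alpha\gamma_{i,m}^2}{(\gamma_{i,m}^2+\alpha)^3}\bigl(\gamma_{i,m}^2(x_{i,m}^*)^2+\sigma^2\bigr)-2\sigma^2\sum_{i=1}^m\frac{\gamma_{i,m}^2}{(\gamma_{i,m}^2+\alpha)^2}.
\end{equation*}

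The next step is to bound $|\mspem'(\alpha)|$ uniformly for $\alpha\in[C_1,C_2]$ and $m\in\mathbb{N}$. Using Assumption \ref{Assumption}, namely $0\leq\gamma_{i,m}\leq 1$, one has $\gamma_{i,m}^2+\alpha\geq C_1$, whence
\begin{equation*}
\frac{2\alpha\gamma_{i,m}^2}{(\gamma_{i,m}^2+\alpha)^3}\leq\frac{2C_2}{C_1^3}\gamma_{i,m}^2,\qquad\frac{\gamma_{i,m}^2}{(\gamma_{i,m}^2+\alpha)^2}\leq\frac{\gamma_{i,m}^2}{C_1^2}.
\end{equation*}
Plugging in and using $\sum_i\gamma_{i,m}^2\leq m$ together with $\sum_i\gamma_{i,m}^2(x_{i,m}^*)^2\leq\|x^*_m\|_2^2=O(m)$ (the hypothesis from Theorem \ref{PSUREl}, which I assume to hold here as well), one obtains a bound of the form $|\mspem'(\alpha)|\leq K\cdot m$ with a constant $K=K(C_1,C_2,\sigma)$ that is independent of $m$ and $\alpha$. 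Dividing by $m$ yields $|f_m'(\alpha)|\leq K$ for all $\alpha\in[C_1,C_2]$ and $m\in\mathbb{N}$, i.e.\ the family $\{f_m\}$ is uniformly Lipschitz on $[C_1,C_2]$, which in particular implies equicontinuity.

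To invoke Arzel\`a--Ascoli I also need pointwise (equivalently, uniform) boundedness. Evaluating \eqref{e2} at any fixed $\alpha_0\in[C_1,C_2]$ and bounding each of the four summands analogously (using $\alpha^2/(\gamma_i^2+\alpha)^2\leq 1$, $\gamma_i^2/(\gamma_i^2+\alpha)\leq 1$ and $\sum(x_{i,m}^*)^2=O(m)$) shows $|f_m(\alpha_0)|\leq M$ for some $M$ independent of $m$; combined with the uniform Lipschitz bound this gives $\sup_{\alpha\in[C_1,C_2]}|f_m(\alpha)|\leq M+K(C_2-C_1)$. The Arzel\`a--Ascoli theorem then yields a subsequence $f_{m_k}$ converging uniformly on $[C_1,C_2]$ to a continuous limit $f$, finishing the proof.

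The only real work is the bookkeeping of the four summands in the derivative; no step is conceptually hard. The mild subtlety is making sure the bounds are truly uniform in $m$, which relies crucially on both parts of Assumption \ref{Assumption} ($\gamma_{1,m}=1$ is not used, but $\gamma_{i,m}\leq 1$ is essential to bound $\sum\gamma_{i,m}^2\leq m$) and on the growth condition $\|x^*_m\|_2^2=O(m)$ carried over from Theorem \ref{PSUREl}.
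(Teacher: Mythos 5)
Your proof is correct and follows exactly the route the paper intends (and only sketches): a straightforward bound on $\mspem'(\alpha)$ for $\alpha$ bounded away from zero, giving a uniform Lipschitz constant after division by $m$, plus uniform boundedness, and then Arzel\`a--Ascoli; your computed derivative also agrees with the expression used in the proof of Lemma \ref{alphasurelemma}. Your explicit invocation of the growth condition $\|x^*_m\|_2^2=O(m)$ is exactly the hypothesis the paper carries implicitly from the surrounding results, so there is no gap.
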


In order to obtain convergence of minimizers it suffices to be able to choose uniform constants $C_1$ and $C_2$, which is possible if the bounds in Lemma \ref{alphasurelemma} are uniform:
\begin{Theorem}\label{Thm:alphaconv}
Let $\max_{i=1}^m \abs{x_{i,m}^*}$ be uniformly bounded in $m$ and $\frac{1}m \sum_{i=1}^m \gamma_{i,m}^4 (x_{i,m}^*)^2$ be uniformly bounded away from zero. Then there exists a subsequence $\hat \alpha_{\mspe,m_k}$ that converges to a minimizer of the asymptotic risk $f$. Moreover  $\hat \alpha_{\psure,m_k}$  converges to to a minimizer of the asymptotic risk $f$ in probability.
\end{Theorem}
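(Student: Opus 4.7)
The plan is to combine Lemma \ref{alphasurelemma} (to trap the deterministic minimizer in a fixed compact interval) with Proposition \ref{fmproposition} (to get uniform convergence of $f_m$ along a subsequence) and then Theorem \ref{PSURER} (to transfer the conclusion from $\mspe$ to $\psure$).

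First I would verify the deterministic part. Under the two hypotheses, the lower bound $\sigma^2/\max_i |x_{i,m}^*|^2$ in Lemma \ref{alphasurelemma} is bounded below by some $C_1>0$, and the upper bound $\max\{1, 8\sigma^2 \sum\gamma_{i,m}^4/\sum\gamma_{i,m}^4 (x_{i,m}^*)^2\}$ is bounded above by some $C_2<\infty$, using that $\gamma_{i,m}\le 1$ (so $\sum \gamma_{i,m}^4 \le m$) and the assumed uniform lower bound on $\frac{1}{m}\sum\gamma_{i,m}^4 (x_{i,m}^*)^2$. Hence $\hat\alpha_{\mspe,m}^*\in [C_1,C_2]$ for every $m$. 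Proposition \ref{fmproposition} then yields a subsequence $(m_k)$ along which $f_{m_k}\to f$ uniformly on $[C_1,C_2]$. Since $[C_1,C_2]$ is compact, after extracting a further subsequence I may assume $\hat\alpha_{\mspe,m_k}\to\bar\alpha\in[C_1,C_2]$. A standard $\Gamma$-convergence / uniform convergence argument (for any $\alpha\in[C_1,C_2]$, $f(\bar\alpha)=\lim f_{m_k}(\hat\alpha_{\mspe,m_k})\le\lim f_{m_k}(\alpha)=f(\alpha)$, using uniform convergence for the left-hand equality and pointwise convergence for the right) shows that $\bar\alpha$ minimizes $f$ on $[C_1,C_2]$; by the interval bounds this is a global minimizer of $f$ on $[0,\infty)$.

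For the stochastic part, I would work along the subsequence $(m_k)$ just produced. Theorem \ref{PSURER} gives
\begin{equation*}
\Delta_m := \sup_{\alpha\in[0,\infty)}\Bigl|\tfrac{1}{m}\psurem(\alpha,y) - \tfrac{1}{m}\mspem(\alpha)\Bigr|= O_{\mathbb{P}}(m^{-1/2}),
\end{equation*}
so $\Delta_{m_k}\to 0$ in probability. On the event $\{\Delta_{m_k}\le\varepsilon\}$, the function $\frac{1}{m_k}\psurem[m_k](\cdot,y)$ differs from $f_{m_k}$ by at most $\varepsilon$ uniformly. Combined with the uniform convergence $f_{m_k}\to f$ on $[C_1,C_2]$, this forces any (approximate) minimizer of $\psurem[m_k]$ in $[C_1,C_2]$ to accumulate at minimizers of $f$, and a standard argmin-continuity argument yields $\hat\alpha_{\psure,m_k}\to\bar\alpha$ in probability, provided $\hat\alpha_{\psure,m_k}$ itself stays in some compact interval with probability tending to one.

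The main obstacle is precisely this last point: Lemma \ref{alphasurelemma} only bounds the deterministic minimizer $\hat\alpha_{\psure}^*$, not the random one $\hat\alpha_{\psure}$. To handle this I would show that, outside a small-probability event, $\psurem(\cdot,y)/m$ is so close to $f_m$ (by Theorem \ref{PSURER}) that its minimum must still be attained in a slightly enlarged interval $[C_1/2, 2C_2]$: any minimizer outside this interval would force $f_m$ to take a value at least $\eta>0$ below its minimum on $[C_1,C_2]$, contradicting $\Delta_{m_k}\to 0$, once one uses growth of $f_m$ away from its minimizer (which follows from the explicit form \eqref{e2} since $\mspe(\alpha)/m\to\infty$ as $\alpha\to\infty$ and $\mspe(\alpha)/m$ is bounded away from the minimum near $0$ by the same arguments used for Lemma \ref{alphasurelemma}). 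Once the random minimizers are confined to a compact set with probability $\to 1$, the uniform-convergence argument above closes the proof.
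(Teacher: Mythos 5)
Your overall route is exactly the paper's: the paper's own proof is a two-line sketch that combines Lemma \ref{alphasurelemma} (to place the deterministic minimizers in a fixed interval $[C_1,C_2]$ under the two hypotheses), Proposition \ref{fmproposition} (Arzel\`a--Ascoli, uniform convergence of $f_{m_k}$ on that interval, hence convergence of a further subsequence of the minimizers), and Theorem \ref{PSURER} to transfer the statement to the random minimizers $\hat\alpha_{\psure,m_k}$. Your write-up fills in the details of that sketch, and in particular you correctly identify the point the paper passes over silently: Lemma \ref{alphasurelemma} only controls the minimizer of $\mspem$, not the random minimizer of $\psurem(\cdot,y)$, so one must first confine $\hat\alpha_{\psure,m_k}$ to a compact set with probability tending to one before the uniform-convergence/argmin argument applies.

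One justification in that confinement step is, however, wrong as stated: it is not true that $\mspem(\alpha)/m\to\infty$ as $\alpha\to\infty$. From \eqref{e2}, as $\alpha\to\infty$ the filter factors $\alpha^2/(\gamma_i^2+\alpha)^2$ tend to $1$ and the last sum vanishes, so $\mspem(\alpha)/m\to\|A_mx^*_m\|_2^2/m$, a bounded quantity (indeed $\hat x_\alpha\to 0$, so the prediction risk saturates at $\|Ax^*\|_2^2$). The separation you need for large $\alpha$ therefore cannot come from divergence; it comes instead from the quantitative monotonicity already contained in the proof of Lemma \ref{alphasurelemma}: for $\alpha\geq\max\{1,\,16\sigma^2\sum\gamma_{i,m}^4/\sum\gamma_{i,m}^4(x^*_{i,m})^2\}$ one has $\mspem'(\alpha)\geq \tfrac1{8\alpha^2}\sum\gamma_{i,m}^4(x^*_{i,m})^2\geq \tfrac{cm}{8\alpha^2}$ by the assumed lower bound $c$ on $\tfrac1m\sum\gamma_{i,m}^4(x^*_{i,m})^2$, and integrating from $C_2$ to $2C_2$ gives $f_m(\alpha)\geq\min f_m+\eta$ for all $\alpha\geq 2C_2$ with $\eta>0$ uniform in $m$; the analogous derivative bound below $C_1$ (which you do invoke correctly) gives the separation near zero. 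With this repair your confinement argument, and hence the whole proof, goes through. A last small caveat, shared with the paper's formulation: the final convergence in probability is to the particular limit $\bar\alpha$ only if the minimizer of $f$ is unique (or well separated); otherwise the argument yields convergence to the set of minimizers of $f$ along the subsequence, which is the honest reading of the statement.
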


\subsection{Discrepancy Principle}

We now turn our attention to the discrepancy principle, which we can formulate in a similar setting as the \reply{PSURE} approach above. With a slight abuse of notation, in analogy to the other methods, we denote the expectation of $ \DP(\alpha,y)$ by $\EDP(\alpha)$ and define $\alphaDP$ as the solution of the  equation
\begin{equation*}
	  \EDP(\alpha)=\sum_{i=1}^m\frac{\alpha^2}{(\gamma_i^2+\alpha)^2}\mathbb{E}[y_i^2]-m\sigma^2=0.
	\end{equation*}
Figure \ref{subfig:L2DPrisk} illustrates the typical shape of $\EDP(\alpha)$ and its DP estimates. Observing that
\begin{equation*}
\DP(\alpha,y)-\EDP(\alpha)= \psure(\alpha,y)-\mspe(\beta)
\end{equation*}
we immediately obtain the following result:
\begin{Theorem}\label{DR} 
If Assumption \ref{Assumption} holds, \reply{ we have for any sequence of vectors  $(x^*_m)_{m\in\mathbb{N}}$, $x^*_m\in\R^m,$ such  that $\|x^*_m\|_2^2=O(m)$} 
\begin{equation*}
\sup_{\alpha\in[0,\infty)}\Bigl|\frac{1}{m}\bigl(\DPm(\alpha,y)-\EDPm(\alpha)\bigr)\Bigr|=O_\mathbb{P}\Bigl(\frac{1}{\sqrt{m}}\Bigr)
\end{equation*}
and
\begin{equation*}
\mathbb{E}\biggl(\sup_{\alpha\in[0,\infty)}\Bigl|\frac{1}{m}\bigl(\DPm(\alpha,y)-\EDPm(\alpha)\bigr)\Bigr|\biggr)^2=O\Bigl(\frac{1}{m}\Bigr).
\end{equation*}
\end{Theorem}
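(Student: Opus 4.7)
\textbf{Proof plan for Theorem \ref{DR}.} The plan is to observe that the claimed bounds are in fact \emph{identical} to those already established for PSURE in Theorem \ref{PSURER}, because the two centered processes coincide pointwise. Concretely, comparing the spectral formulas \eqref{DPspectral} and \eqref{PSUREspectral}, one sees that
\begin{equation*}
\psure(\alpha,y)-\DP(\alpha,y)=2\sigma^2\sum_{i=1}^m\frac{\gamma_i^2}{\gamma_i^2+\alpha},
\end{equation*}
which is a deterministic function of $\alpha$ (it does not depend on the data $y$). Taking expectations, the same deterministic quantity appears in $\mspe(\alpha)-\EDP(\alpha)$, so it cancels upon subtraction: for every $\alpha\geq 0$,
\begin{equation*}
\DP(\alpha,y)-\EDP(\alpha)=\psure(\alpha,y)-\mspe(\alpha).
\end{equation*}
This is the identity already displayed in the excerpt (with the typographical substitution of $\beta$ for $\alpha$).

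Once this identity is noted, both assertions of the theorem follow immediately. Dividing by $m$ and taking the supremum over $\alpha\in[0,\infty)$ on both sides gives
\begin{equation*}
\sup_{\alpha\in[0,\infty)}\Bigl|\tfrac{1}{m}\bigl(\DPm(\alpha,y)-\EDPm(\alpha)\bigr)\Bigr|
=\sup_{\alpha\in[0,\infty)}\Bigl|\tfrac{1}{m}\bigl(\psurem(\alpha,y)-\mspem(\alpha)\bigr)\Bigr|,
\end{equation*}
so the stochastic-order bound $O_{\mathbb{P}}(m^{-1/2})$ is inherited directly from the first statement of Theorem \ref{PSURER}, and the $L^2$-bound $O(m^{-1})$ is inherited from \eqref{ExpSupPSURE}. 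No new probabilistic estimate is required.

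\textbf{Where the real work sits.} Since the theorem reduces to Theorem \ref{PSURER} via an exact pointwise identity, there is no genuine obstacle at this stage; the technical content (moment bounds on quadratic forms in the Gaussian noise $\tilde\varepsilon_i$, together with a uniform control in $\alpha$ exploiting monotonicity of the multipliers $\alpha^2/(\gamma_i^2+\alpha)^2$) has already been absorbed into the proof of Theorem \ref{PSURER}. The only thing worth double-checking in writing up is that the bounds in Theorem \ref{PSURER} were indeed derived for $\psurem(\alpha,y)-\mspem(\alpha)$ rather than $\psurem(\alpha,y)-\mathcal{L}_m(\alpha)$ (the latter being the content of Theorem \ref{PSUREl}); with that distinction clear, Theorem \ref{DR} follows in one line.
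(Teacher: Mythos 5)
Your proposal is correct and matches the paper's own argument exactly: the paper likewise observes that $\DP(\alpha,y)-\EDP(\alpha)=\psure(\alpha,y)-\mspe(\alpha)$ pointwise (the $\beta$ in the displayed identity is indeed a typo for $\alpha$) because the two estimators differ only by the deterministic term $2\sigma^2\sum_{i=1}^m\gamma_i^2/(\gamma_i^2+\alpha)$, and then both bounds are inherited directly from Theorem \ref{PSURER}. No further comment is needed.
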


\subsection{SURE-Risk}

 Now we consider the SURE-risk estimation procedure. Figure \ref{subfig:L2GSURErisk} illustrates the typical shape of $\msee(\alpha)$ and SURE estimates thereof. 
{Based on \eqref{GSUREspectral}, if $\gamma_m>0$ for all $m$, the risk can be written as
 \begin{align*}
	  \msee(\alpha,y) = \sum_{i=1}^m \left( \frac{1}{\gamma_i} - \frac{\gamma_i}{(\gamma_i^2 + \alpha)} \right)^2 \bigl(\gamma_i^2(x_i^*)^2+\sigma^2\bigr) - \sigma^2 \sum_{i=1}^m \frac{1}{\gamma_i^2}  + 2 \sigma^2 \sum_{i=1}^m \frac{1}{\gamma_i^2 + \alpha}.
	\end{align*}
}\\
For the PSURE criterion we showed in Theorem \ref{PSUREl}  that $\psure(\alpha,y)$ is close to the loss $\mathcal{L}$ in an asymptotic sense with the standard $\sqrt{m}$-rate of convergence. An analogous result can be shown for SURE and the associated loss  $\tilde{\mathcal{L}}(\alpha):=c_m\|\Pi (x^*-\hat x_{\alpha})\|_2^2$ but with different associated rates of convergence $c_m$, dependent on the singular values.

\begin{Theorem}\label{GSUREl}
Let Assumption \ref{Assumption}  be satisfied and in addition to \eqref{CondGamma}, let $\gamma_{m,m}>0$ for all $m$ and $m=n=r$. Then 
\reply{ we have for any sequence of vectors  $(x^*_m)_{m\in\mathbb{N}}$, $x^*_m\in\R^m,$ such  that $\max_{i=1}^m \abs{x_{i,m}^*}$ is uniformly bounded} 
as $m\rightarrow\infty,$ 
$$
  \sup_{\alpha\in[0,\infty)}\Bigl|c_m\gsurem(\alpha,y)-\tilde{\mathcal{L}}_m(\alpha)\Bigr|=
  O_\mathbb{P}\left(d_m\right),
$$
where 
$$
  c_m:=\left(\sum_{i=1}^m\frac{1}{\gamma_{i,m}^2}\right)^{-1}\quad\text{and}\qquad d_m:=c_m\cdot\sqrt{\sum_{i=1}^m\frac{1}{\gamma_{i,m}^4}}\,.
$$
\end{Theorem}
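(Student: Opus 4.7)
}

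The strategy is to decompose $c_m\gsurem(\alpha,y)-\tilde{\mathcal{L}}_m(\alpha)$ explicitly into a sum of two mean-zero stochastic processes in $\alpha$, bound their pointwise variances by $O(d_m^2)$, and then upgrade pointwise control to uniform control over $\alpha\in[0,\infty)$ by a covering/monotonicity argument on a compactification of the parameter range.

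First I would substitute $y_i=\gamma_ix_i^*+\tilde\varepsilon_i$ into the spectral formula for $\gsure$ in \eqref{GSUREspectral} and into $\|x^*-\hat x_\alpha\|_2^2$, using the identities
\begin{equation*}
\frac{1}{\gamma_i}-\frac{\gamma_i}{\gamma_i^2+\alpha}=\frac{\alpha}{\gamma_i(\gamma_i^2+\alpha)},\qquad x_i^*-\frac{\gamma_iy_i}{\gamma_i^2+\alpha}=\frac{\alpha x_i^*-\gamma_i\tilde\varepsilon_i}{\gamma_i^2+\alpha}.
\end{equation*}
The deterministic $(x_i^*)^2$-terms cancel identically, and the deterministic $\sigma^2$-corrections in $\gsure$ precisely offset the mean of the $\tilde\varepsilon_i^2$-terms (this is exactly Stein's unbiasedness). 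A short computation yields the clean decomposition
\begin{equation*}
c_m\gsurem(\alpha,y)-\tilde{\mathcal L}_m(\alpha)=T_1(\alpha)+T_2(\alpha),
\end{equation*}
with
\begin{equation*}
T_1(\alpha)=2c_m\sum_{i=1}^m\frac{\alpha\,x_{i,m}^*}{\gamma_{i,m}(\gamma_{i,m}^2+\alpha)}\tilde\varepsilon_{i,m},\qquad T_2(\alpha)=c_m\sum_{i=1}^m\frac{\alpha-\gamma_{i,m}^2}{\gamma_{i,m}^2(\gamma_{i,m}^2+\alpha)}(\tilde\varepsilon_{i,m}^2-\sigma^2).
\end{equation*}
Thus $T_1$ is a Gaussian linear form and $T_2$ is a centred weighted sum of i.i.d.\ $\chi_1^2-1$ random variables.

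Next I would establish the pointwise variance bounds, which is the step where the rate $d_m$ emerges naturally. Using $0\le\alpha/(\gamma_i^2+\alpha)\le1$ and $\max_i|x_{i,m}^*|\le C$, one gets
\begin{equation*}
\Var T_1(\alpha)\le 4C^2\sigma^2\,c_m^2\sum_{i=1}^m\frac{1}{\gamma_{i,m}^2}=4C^2\sigma^2\,c_m,
\end{equation*}
and since $\gamma_1=1$ forces $1/\gamma_i^4\ge 1/\gamma_i^2$ for every $i$, so that $c_m\le d_m^2$, this is $O(d_m^2)$ uniformly in $\alpha$. For $T_2$, using $|(\alpha-\gamma_i^2)/(\gamma_i^2+\alpha)|\le1$ gives
\begin{equation*}
\Var T_2(\alpha)\le 2\sigma^4\,c_m^2\sum_{i=1}^m\frac{1}{\gamma_{i,m}^4}=2\sigma^4\,d_m^2.
\end{equation*}
Hence $T_1(\alpha)+T_2(\alpha)=O_{\mathbb P}(d_m)$ for every fixed $\alpha$.

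Finally, to pass from pointwise to uniform control I would reparametrize by $s=\alpha/(1+\alpha)\in[0,1]$ and work on the compact interval $[0,1]$. Each filter $\alpha\mapsto\alpha/(\gamma_i^2+\alpha)$ and $\alpha\mapsto(\alpha-\gamma_i^2)/(\gamma_i^2+\alpha)$ is monotone in $\alpha$ with values in $[0,1]$ and $[-1,1]$ respectively; this monotonicity gives deterministic bounds on the increments of $T_1$ and $T_2$ between two grid points in terms of the corresponding variances at these points. Taking a polynomial grid $s_1,\dots,s_{N_m}$ with $N_m$ polynomial in $m$, one controls $\sup_k|T_j(s_k)|$ by Gaussian tail bounds for $T_1$ and Hanson--Wright/Laurent--Massart inequalities for $T_2$, losing only a $\sqrt{\log N_m}$ factor which can be absorbed into any constant $T$ in Definition \ref{def:OrderSymbols}. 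The remainder between consecutive grid points is handled by the monotonicity/Lipschitz estimates, which decay fast enough (essentially $1/N_m$) to be negligible. Combining the two yields $\sup_\alpha|T_1(\alpha)+T_2(\alpha)|=O_{\mathbb P}(d_m)$.

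The main obstacle I anticipate is the uniform control in the last step: the pointwise variance of $T_2$ is already of exact order $d_m^2$, so any chaining or metric-entropy argument must be tight enough not to produce an extra multiplicative $\log m$ factor that would spoil the stated rate. Leveraging the monotonicity of the individual filter functions (rather than a generic Dudley chaining) is what makes this possible.
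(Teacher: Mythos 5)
Your decomposition is exactly the one the paper uses (your $T_1,T_2$ coincide with the terms $GSl_1,GSl_2$ after simplifying $\frac{\alpha^2-\gamma_i^4}{\gamma_i^2(\gamma_i^2+\alpha)^2}=\frac{\alpha-\gamma_i^2}{\gamma_i^2(\gamma_i^2+\alpha)}$), and your pointwise variance bounds, including the observation $c_m\le d_m^2$, are correct. The gap is in the uniformity step. A union bound over a grid of $N_m$ points with Gaussian/Hanson--Wright tails costs at least a factor $\sqrt{\log N_m}$ in the \emph{rate}, and a growing factor cannot be ``absorbed into the constant $T$'' of Definition \ref{def:OrderSymbols}: $O_{\mathbb{P}}(d_m)$ requires $\lim_{T\to\infty}\limsup_m\mathbb{P}(\sup_\alpha|\cdot|>Td_m)=0$ with $T$ fixed as $m\to\infty$, whereas your argument only yields $O_{\mathbb{P}}(d_m\sqrt{\log N_m})$. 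Worse, the increment control you invoke is monotonicity in $\alpha$, which only gives a Lipschitz-type bound with constant of order $c_m\sum_i\gamma_i^{-4}\,|\tilde\varepsilon_i^2-\sigma^2|\lesssim \gamma_{m,m}^{-2}$ in probability; to make the between-grid remainder $o(d_m)$ you then need $N_m\gtrsim \gamma_{m,m}^{-2}/d_m$, which in the severely ill-posed regime of interest (exponentially decaying singular values) is exponential in $m$, so the union-bound loss becomes polynomial in $m$ and the stated rate is lost even for moderate ill-posedness (polynomial decay already leaves a $\sqrt{\log m}$ deficit).

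The paper avoids discretization entirely by exploiting a different monotonicity: for every \emph{fixed} $\alpha$ the weight sequences $\bigl(\alpha/(\gamma_i^2+\alpha)\bigr)_i$ and $\bigl(\gamma_i^4/(\gamma_i^2+\alpha)^2\bigr)_i$ are monotone in the index $i$ and take values in $[0,1]$. Hence the supremum over all $\alpha$ of each of $GSl_1,GSl_2$ is dominated by a supremum over monotone coefficient sequences, which by Lemma 7.2 of \cite{Li1985} equals a maximum of partial sums of the (weighted) centred variables $x_i^*\tilde\varepsilon_i/\gamma_i$ and $(\tilde\varepsilon_i^2-\sigma^2)/\gamma_i^2$; Kolmogorov's maximal inequality then bounds this maximum at exactly the pointwise-variance rate, giving $O_{\mathbb{P}}\bigl(\sqrt{\sum_i(x_i^*)^2/\gamma_i^2}\bigr)$ and $O_{\mathbb{P}}\bigl(\sqrt{\sum_i\gamma_i^{-4}}\bigr)$ with no logarithmic loss, after which multiplying by $c_m$ and using $\sqrt{c_m}\le 1/\sqrt m\le d_m$ yields the claim. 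To repair your proof you would need to replace the grid-plus-union-bound step by such a maximal-inequality argument over monotone weight sequences (or an equivalently lossless device); as written, the uniform step fails to deliver $O_{\mathbb{P}}(d_m)$.
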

\reply{In the same manner as for PSURE, we may use the latter convergence result to show that, in an asymptotic sense, if the loss $\tilde{\mathcal{L}}$ is considered, the estimator $\hat{\alpha}_{\gsure}$ does not have a larger risk than any other choice of regularization parameter.  We stress again that this optimality property depends on the loss considered, as it is the case in Corollary \ref{CorPSUREl}.
\begin{Corollary}\label{CorGSUREl}
Let $(\delta_m)_{m\in\mathbb{N}}$ be  a sequence of positive reals   such that $d_m=o(\delta_m)$.
If the assumptions of Theorem \ref{GSUREl} hold, we have  for any sequence of positive real numbers $(\alpha_m)_{m\in\mathbb{N}}$:
 \begin{align*}
 \mathbb{P}\big(\tilde{\mathcal{L}}_m(\hat\alpha_{\gsure,m}\big) \geq \tilde{\mathcal{L}}_m(\alpha_m)+\delta_m)\rightarrow0.
 \end{align*}	
\end{Corollary}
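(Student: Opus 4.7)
The plan is to mimic the argument that establishes Corollary \ref{CorPSUREl} from Theorem \ref{PSUREl}, replacing the PSURE uniform approximation by the SURE uniform approximation from Theorem \ref{GSUREl}. Note that $c_m>0$ so multiplication by $c_m$ does not change the minimizer, i.e.\ $\hat\alpha_{\gsure,m}\in\argmin_\alpha c_m\gsure_m(\alpha,y)$, and thus for any deterministic sequence $(\alpha_m)$ of positive reals we have the almost-sure inequality
\begin{equation*}
c_m\gsure_m(\hat\alpha_{\gsure,m},y)\;\leq\;c_m\gsure_m(\alpha_m,y).
\end{equation*}

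Using this together with Theorem \ref{GSUREl}, which asserts that $\sup_{\alpha\ge0}|c_m\gsure_m(\alpha,y)-\tilde{\mathcal{L}}_m(\alpha)|=O_{\mathbb{P}}(d_m)$, I would bound
\begin{align*}
\tilde{\mathcal{L}}_m(\hat\alpha_{\gsure,m})
&\leq c_m\gsure_m(\hat\alpha_{\gsure,m},y)+\sup_{\alpha\ge0}\bigl|c_m\gsure_m(\alpha,y)-\tilde{\mathcal{L}}_m(\alpha)\bigr|\\
&\leq c_m\gsure_m(\alpha_m,y)+\sup_{\alpha\ge0}\bigl|c_m\gsure_m(\alpha,y)-\tilde{\mathcal{L}}_m(\alpha)\bigr|\\
&\leq \tilde{\mathcal{L}}_m(\alpha_m)+2\sup_{\alpha\ge0}\bigl|c_m\gsure_m(\alpha,y)-\tilde{\mathcal{L}}_m(\alpha)\bigr|.
\end{align*}
In particular, $\tilde{\mathcal{L}}_m(\hat\alpha_{\gsure,m})-\tilde{\mathcal{L}}_m(\alpha_m)\leq 2\,S_m$ where $S_m:=\sup_{\alpha\ge0}|c_m\gsure_m(\alpha,y)-\tilde{\mathcal{L}}_m(\alpha)|=O_{\mathbb{P}}(d_m)$.

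To conclude, fix $\nu>0$. The assumption $d_m=o(\delta_m)$ means $d_m/\delta_m\to0$, so for any $T>0$ one has $\{2S_m\ge\delta_m\}\subseteq\{S_m/d_m>T\}$ for $m$ large enough (namely as soon as $2Td_m<\delta_m$). Therefore, by Definition \ref{def:OrderSymbols},
\begin{equation*}
\limsup_{m\to\infty}\mathbb{P}\bigl(\tilde{\mathcal{L}}_m(\hat\alpha_{\gsure,m})\geq\tilde{\mathcal{L}}_m(\alpha_m)+\delta_m\bigr)\;\leq\;\limsup_{m\to\infty}\mathbb{P}(S_m>Td_m),
\end{equation*}
and sending $T\to\infty$ the right-hand side vanishes by the very definition of $S_m=O_{\mathbb{P}}(d_m)$. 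This yields the claim.

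I do not expect any genuine obstacle here: the whole work has already been done in Theorem \ref{GSUREl}, and the present corollary is a purely soft deduction from a uniform stochastic bound combined with the defining minimization property of $\hat\alpha_{\gsure,m}$. The only subtlety worth flagging explicitly (as the authors already do after the statement) is that the "optimality" of $\hat\alpha_{\gsure,m}$ here is optimality with respect to the loss $\tilde{\mathcal{L}}_m$, not with respect to $\mathcal{L}_m$, and the rate $\delta_m$ one may achieve depends on $d_m=c_m\sqrt{\sum_i\gamma_{i,m}^{-4}}$, which can be considerably worse than $m^{-1/2}$ in the ill-posed regime where the smallest singular values decay.
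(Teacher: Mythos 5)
Your proof is correct and follows essentially the same route as the paper, which proves this corollary exactly as Corollary \ref{CorPSUREl}: exploit the minimization property of $\hat\alpha_{\gsure,m}$, sandwich $\tilde{\mathcal{L}}_m(\hat\alpha_{\gsure,m})-\tilde{\mathcal{L}}_m(\alpha_m)$ by twice the uniform deviation $\sup_{\alpha}|c_m\gsurem(\alpha,y)-\tilde{\mathcal{L}}_m(\alpha)|$, and use Theorem \ref{GSUREl} together with $d_m=o(\delta_m)$ to conclude. Your handling of the $O_{\mathbb{P}}(d_m)$ bound via the $T\to\infty$ limit is just a slightly more explicit version of the paper's appeal to $o_{\mathbb{P}}(\delta_m)$, so there is nothing to add.
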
	}
\reply{Note that $1/\sqrt{m}\leq d_m\leq1$, depending on the behaviour of the singular values. If $\inf_m d_m>0$, $O_{\mathbb{P}}(d_m)=O_{\mathbb{P}}(1)$ in Theorem \ref{GSUREl} and only sequences $\delta_m$ such that  $\inf_m \delta_m>0$ are permissible in Corollary \ref{CorGSUREl}. }

We can now proceed to an estimate between $\gsure$ and $\msee$ similar to the ones for the \reply{PSURE} risk, however we observe a main difference due to the appearance of the condition number of the forward matrix $A$:
\begin{Theorem}\label{GSURER}
Let $A_{m}\in\mathbb{R}^{m\times m}$ be a full rank matrix. In addition to Assumption \ref{Assumption}, let $\gamma_{m,m}>0$ for all $m$ and $\gamma_{m,m}\rightarrow 0$. Then, 
\reply{ we have for any sequence of vectors  $(x^*_m)_{m\in\mathbb{N}}$, $x^*_m\in\R^m,$ such  that $\|x^*_m\|_2^2=O(m)$}  as $m\rightarrow\infty,$ 

\begin{equation*}
  \sup_{\alpha\in[0,\infty)}\Bigl|\frac{1}{m \, \cond(A_m)^2}\bigr(\gsurem(\alpha,y)-\mseem(\alpha)\bigr)\Bigr|=O_\mathbb{P}\Bigl(\frac{1}{\sqrt{m}}\Bigr)
\end{equation*}
{ and 
\begin{equation} \label{ExpSupGSURE}
  \mathbb{E}\biggl(\sup_{\alpha\in[0,\infty)}\Bigl|\frac{1}{m \, \cond(A_m)^2}\bigr(\gsurem(\alpha,y)-\mseem(\alpha)\bigr)\Bigr|\biggr)^2=O_\mathbb{P}\Bigl(\frac{1}{m}\Bigr).
\end{equation}
}
\end{Theorem}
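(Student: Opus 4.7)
The plan is to mirror the proof of Theorem \ref{PSURER} and let the extra factor $\cond(A_m)^2$ emerge naturally from the enhanced noise amplification intrinsic to the generalized degrees of freedom. First I would write out the stochastic deviation explicitly. Using the spectral representation \eqref{GSUREspectral}, the unbiasedness of $\gsure$ for $\msee$, and the identity $\tfrac{1}{\gamma_i}-\tfrac{\gamma_i}{\gamma_i^2+\alpha}=\tfrac{\alpha}{\gamma_i(\gamma_i^2+\alpha)}$, the $-\sigma^2/\gamma_i^2$ and $2\sigma^2/(\gamma_i^2+\alpha)$ terms cancel against their expectations to give
\begin{equation*}
\gsurem(\alpha,y)-\mseem(\alpha)=\sum_{i=1}^{m}\frac{\alpha^{2}}{\gamma_{i,m}^{2}(\gamma_{i,m}^{2}+\alpha)^{2}}\,Z_{i,m},\qquad Z_{i,m}:=y_{i,m}^{2}-\mathbb{E}[y_{i,m}^{2}].
\end{equation*}
Compared with the analogous representation for $\psure_m-\mspe_m$ used in Theorem \ref{PSURER}, the only new ingredient is the prefactor $1/\gamma_{i,m}^{2}$, which is bounded above by $1/\gamma_{m,m}^{2}=\cond(A_m)^{2}$ because $\gamma_{1,m}=1$ under Assumption \ref{Assumption}.

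Next I would collect the second-moment information. Writing $y_i=\gamma_i x_i^{*}+\tilde\varepsilon_i$ gives $Z_i=2\gamma_i x_i^{*}\tilde\varepsilon_i+(\tilde\varepsilon_i^{2}-\sigma^{2})$, so the $Z_i$ are independent with $\mathbb{E}[Z_i^{2}]=2\sigma^{4}+4\sigma^{2}\gamma_i^{2}(x_i^{*})^{2}$. Combining $\gamma_i\leq 1$ with $\|x^{*}_{m}\|_{2}^{2}=O(m)$ yields $\sum_{i=1}^{m}\mathbb{E}[Z_{i,m}^{2}]=O(m)$, and for any fixed $\alpha$
\begin{equation*}
\mathbb{E}\bigl[(\gsurem(\alpha,y)-\mseem(\alpha))^{2}\bigr]=\sum_{i=1}^{m}\frac{\alpha^{4}}{\gamma_{i,m}^{4}(\gamma_{i,m}^{2}+\alpha)^{4}}\mathbb{E}[Z_{i,m}^{2}]\leq\cond(A_m)^{4}\cdot O(m),
\end{equation*}
which via Chebyshev yields the pointwise version of both claims.

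To upgrade to the uniform statement I would rerun the uniformity argument from the proof of Theorem \ref{PSURER} on the rescaled noise. A clean way to make this reduction is to set $\tilde Z_{i,m}:=Z_{i,m}/\gamma_{i,m}^{2}$; the difference then takes exactly the PSURE form $\sum_{i}\tfrac{\alpha^{2}}{(\gamma_{i,m}^{2}+\alpha)^{2}}\tilde Z_{i,m}$, with new independent noise coordinates satisfying $\sum_{i}\mathbb{E}[\tilde Z_{i,m}^{2}]=O(m\,\cond(A_m)^{4})$. The PSURER machinery then applies verbatim and the additional factor $\cond(A_m)^{2}$ is exactly what gets absorbed into the normalization. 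The $L^{2}$-estimate \eqref{ExpSupGSURE} is obtained along the same lines, tracking $\mathbb{E}[\sup_{\alpha}(\cdot)^{2}]$ instead of stochastic orders.

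The main obstacle is precisely the supremum over $\alpha\in[0,\infty)$: a naive bound $|\gsurem-\mseem|\leq\cond(A_m)^{2}\sum_{i}|Z_{i,m}|$ is off by a factor $\sqrt{m}$, and the monotonicity of each coefficient $\alpha\mapsto\alpha^{2}/(\gamma_i^{2}(\gamma_i^{2}+\alpha)^{2})$ is destroyed by the signs of the $Z_{i}$. I would therefore compactify $[0,\infty)$ via $t=\alpha/(1+\alpha)\in[0,1]$, discretize $t$ on a grid of polynomial size, use Bernstein-type tail bounds (the $Z_i$ are sub-exponential as second-order polynomials in Gaussians) to control the discrete maximum by a union bound, and use the uniform total-variation bound on the coefficient functions (each has total variation $1/\gamma_i^{2}$ on $[0,\infty)$) to absorb the between-grid oscillations. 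Tracking how the bounds depend on $\gamma_{m,m}$ throughout produces exactly the factor $\cond(A_m)^{2}$ in the denominator.
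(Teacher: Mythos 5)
Your main line of argument is essentially the paper's: the spectral representation $\gsurem(\alpha,y)-\mseem(\alpha)=\sum_i\frac{\alpha^2}{\gamma_{i,m}^2(\gamma_{i,m}^2+\alpha)^2}(y_i^2-\mathbb{E}[y_i^2])$, followed by transferring the PSURE machinery to the rescaled noise, is exactly what the paper does (it merely rescales the coefficients by $\gamma_{m,m}^2$ instead of rescaling the noise by $\gamma_{i,m}^{-2}$), and Kolmogorov's and Doob's maximal inequalities then give both claims with the variance bound $\sum_i(4\gamma_{i,m}^2(x_{i,m}^*)^2\sigma^2+2\sigma^4)=O(m)$ producing the factor $\sqrt{m}\,\cond(A_m)^2$.

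Your final paragraph, however, retracts this for no good reason and would get you into trouble. The supremum over $\alpha$ is \emph{not} an obstacle here, and the signs of the $Z_{i,m}$ are irrelevant: for each fixed $\alpha$ the coefficient vector $\bigl(\frac{\alpha^2}{\gamma_{i,m}^2(\gamma_{i,m}^2+\alpha)^2}\bigr)_{i=1}^m$ is monotone in $i$ and bounded by $\gamma_{m,m}^{-2}$, so the supremum over $\alpha\in[0,\infty)$ is dominated by the supremum over \emph{all} monotone coefficient vectors in $[0,\gamma_{m,m}^{-2}]$, which by the rearrangement identity of Li (1985, Lemma 7.2) -- already invoked in the proofs of Theorems \ref{PSUREl} and \ref{PSURER} -- equals $\gamma_{m,m}^{-2}\max_{1\le j\le m}\bigl|\sum_{i=j}^m Z_{i,m}\bigr|$, a finite maximum of partial sums to which Kolmogorov's maximal inequality applies directly; no discretization, chaining or monotonicity in $\alpha$ is needed. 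By contrast, the discretization-plus-Bernstein fallback you sketch would not deliver the stated rate as written: the coefficient functions vary on the scale $\gamma_{i,m}^4$ in $\alpha$ (their derivatives are of order $\gamma_{i,m}^{-4}$ near $\alpha\sim\gamma_{i,m}^2$), and since $\gamma_{m,m}$ may decay exponentially in $m$ (cf. Figure \ref{fig:SingularValues}), a grid of polynomial size cannot absorb the between-grid oscillations, while a grid fine enough to do so makes the union bound cost an additional factor of order $\sqrt{\log(\text{grid size})}$, spoiling the exact $O_\mathbb{P}(1/\sqrt{m})$ rate. Stick with your fourth paragraph and drop the last one.
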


We finally note that in the best case the convergence of SURE is slower than that of PSURE. However, since for ill-posed problems the condition number of $A$ will grow with $m$ the typical case is rather divergence of $\frac{\cond(A)^2}{\sqrt{m}}$, hence the empirical estimates of the regularization parameters might have a large variation, which will be confirmed by the numerical results below.

\section{Numerical Studies for Quadratic Regularization} \label{sec:NumStudiesL2}

\subsection{Setup}

As in the illustrative example in Section \ref{subsec:SinSysQuadReg}, we computed the empirical distributions of the different parameter choice rules for the same scenario (cf. Section \ref{subsec:Setting}) for each combination of $m=n=16, 32, 64, 128, 256, 512, 1024, 2048$, $l = 0.01, 0.02, 0.03, 0.04, 0.06, 0.08, 0,1$ and $\sigma = 0.1$. For $m = 16,\ldots,512$, $N_\varepsilon = 10^6$ and for $m=1024, 2048$, $N_\varepsilon = 10^5$ noise realizations were sampled. The computation was, again, based on a logarithmical $\alpha$-grid, i.e., $\log_{10} \alpha$ is increased linearly in between -40 and 40 with a step size of $0.01$. In addition to the distributions of $\alpha$, the expressions
\begin{equation} \label{supExpNum}
\sup_{\alpha}\, \Bigl|\psure(\alpha,y)-\mspe(\alpha,y)\Bigr|, \quad \text{and} \quad  \sup_{\alpha}\, \Bigl|\gsure(\alpha,y)-\msee(\alpha,y)\Bigr|
\end{equation}
were computed over the $\alpha$-grid. As in some cases, the supremum is obtained in the limit $\alpha \rightarrow \infty$, and hence, on the boundary of our computational grid, we also evaluated \eqref{supExpNum} for $\alpha = \infty$ in these cases. 
  
\subsection{Illustration of Theorems}

We first illustrate Theorems \ref{PSURER} and \ref{GSURER} by computing \eqref{ExpSupPSURE} and \eqref{ExpSupGSURE} based on our samples. The results are plotted in Figure \ref{fig:IlluTheoL2} and show that the asymptotic rates hold. For SURE, the comparison between Figures \ref{subfig:TheoIlluL2GSUREnoCond} and \ref{subfig:TheoIlluL2GSURE} also shows that the dependence on $\cond(A)$ is crucial. 

\begin{figure}[tb]
   \centering
\subfigure[][PSURE]{\includegraphics[width= 0.32\textwidth]{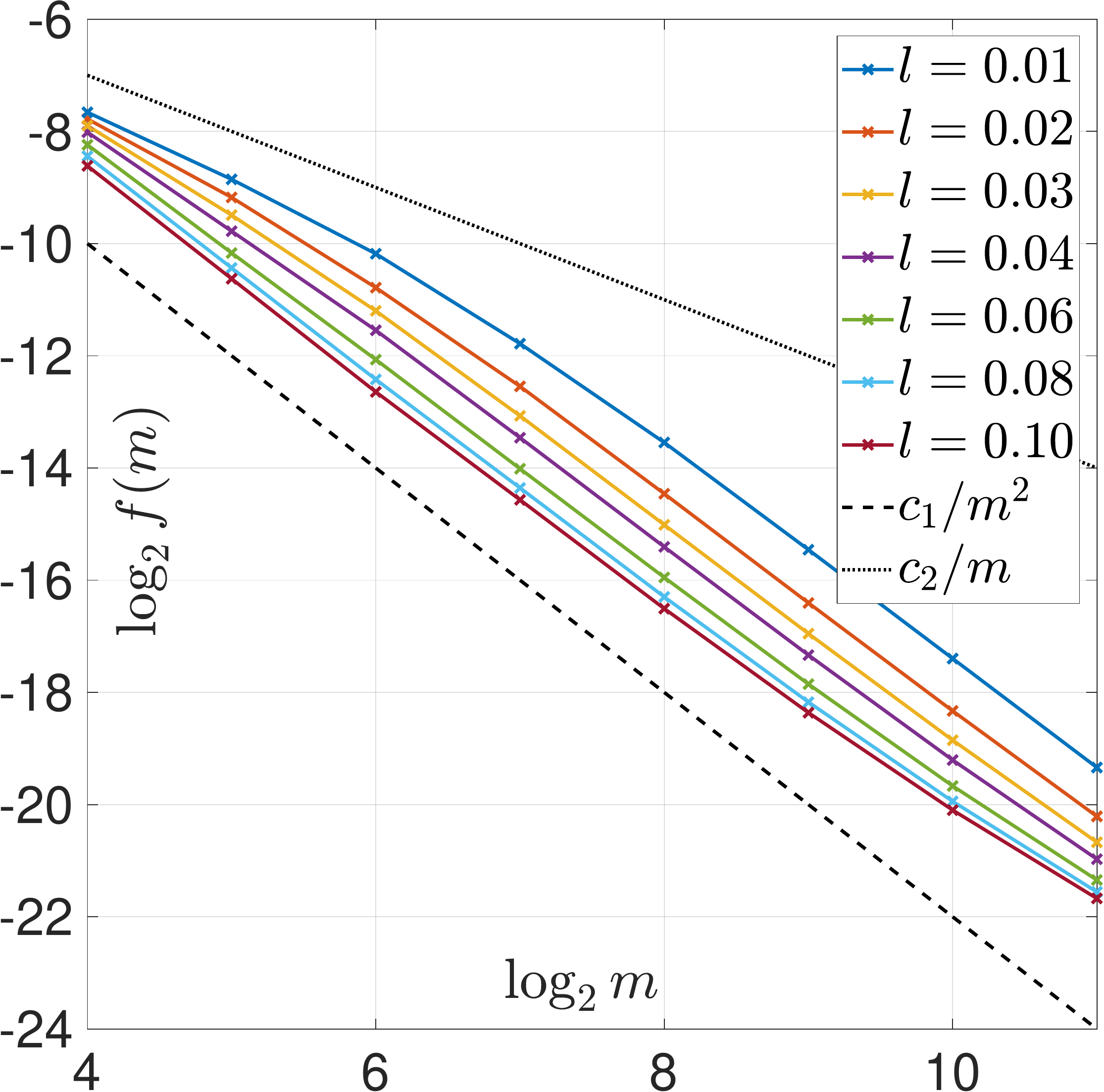}\label{subfig:TheoIlluL2PSURE}}
\subfigure[][SURE, without $\cond(A)$]{\includegraphics[width= 0.32\textwidth]{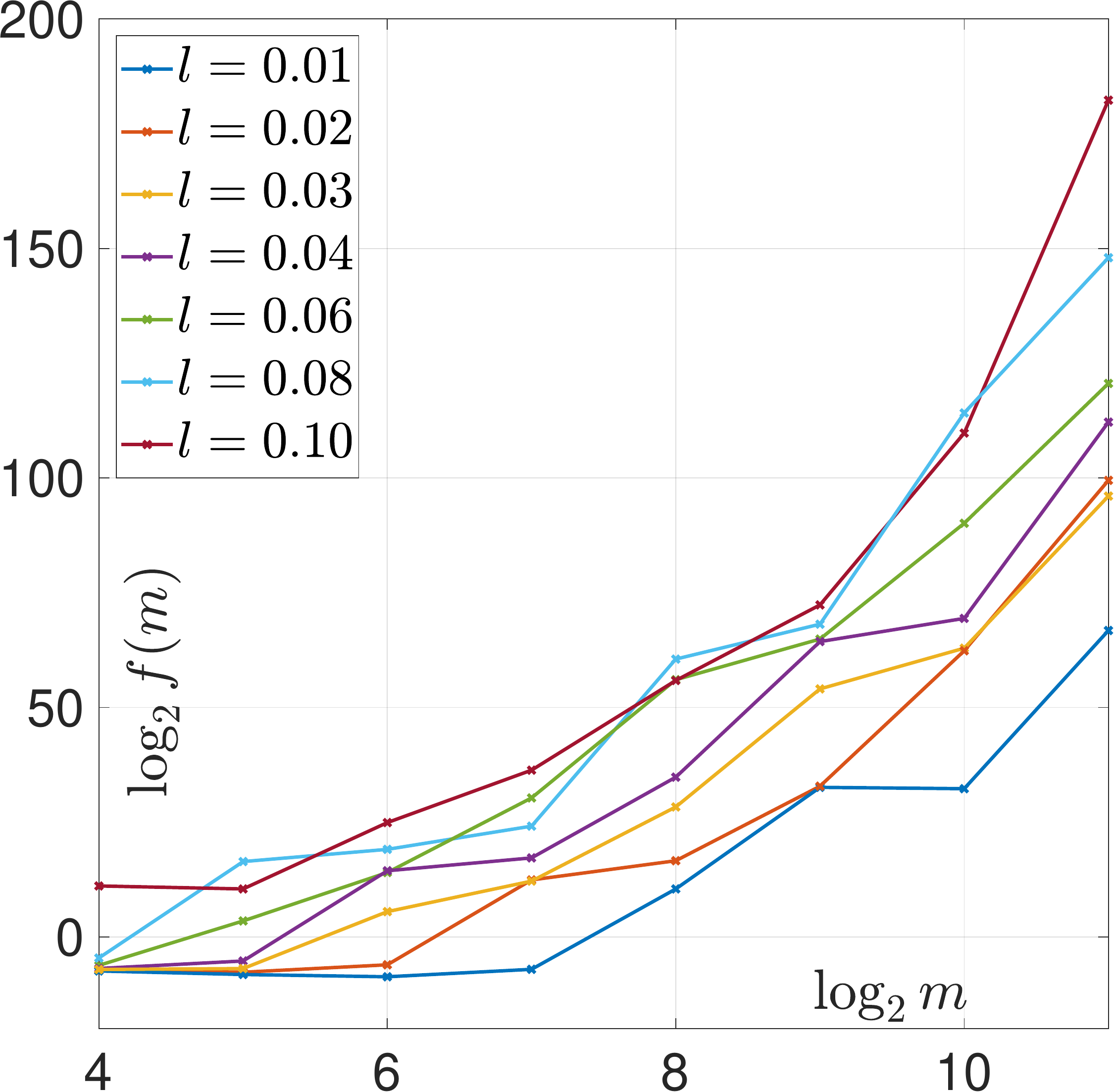}\label{subfig:TheoIlluL2GSUREnoCond}}
\subfigure[][SURE, with $\cond(A)$]{\includegraphics[width= 0.32\textwidth]{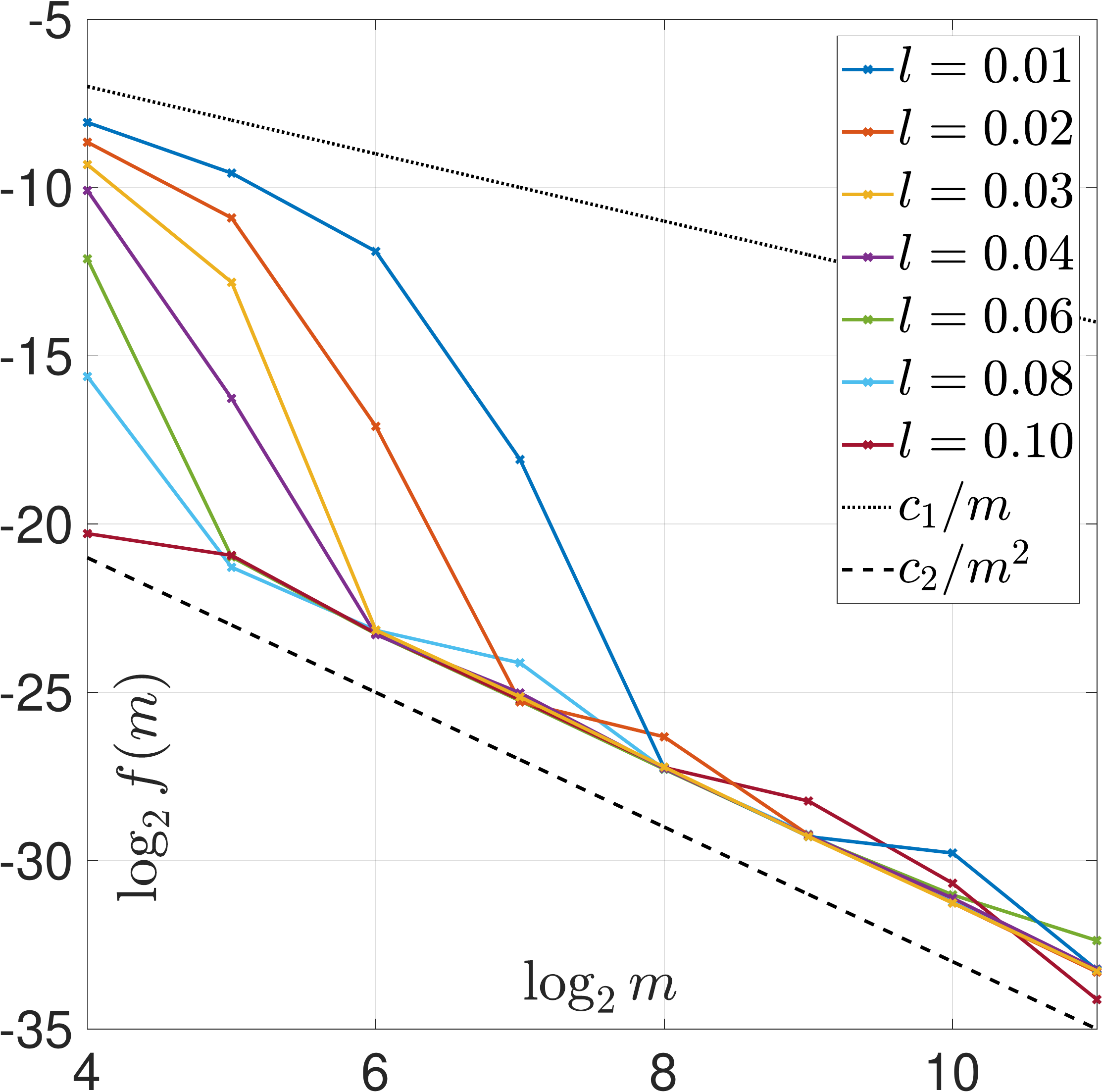}\label{subfig:TheoIlluL2GSURE}}
\caption{Illustration of Theorems \ref{PSURER} and \ref{GSURER} for $\ell_2$-regularization: The left hand side of \eqref{ExpSupPSURE}/\eqref{ExpSupGSURE} was estimated by the sample mean and plotted vs. $m$. For \eqref{ExpSupGSURE}, the normalization with $\cond(A)$ was omitted in \subref{subfig:TheoIlluL2GSUREnoCond} and included in \subref{subfig:TheoIlluL2GSURE}. The black dotted lines were added to compare the order of convergence.} \label{fig:IlluTheoL2}
\end{figure}

\subsection{Dependence on the Ill-Posedness}

We then demonstrate how the empirical distributions of $\hat \alpha$ and the corresponding $\ell_2$-error, $\|x^*-x_{\hat \alpha}\|_2^2$, such as those plotted in Figure \ref{fig:Hist_n64l06N6}, depend on the ill-posedness of the inverse problem. 

\paragraph{Dependence on $m$} 
In Figures \ref{fig:L2AlphaHist_nVarl06N6} and \ref{fig:L2ErrHist_nVarl06N6}, $m$ is increased while the width of the convolution kernel is kept fix. The impact of this on the singular value spectrum is illustrated in Figure \ref{fig:SingularValues}. Most notably, smaller singular values are added and the condition of $A$ increases (cf. Table \ref{tbl:CondA}). Figures \ref{subfig:L2Alpha_nVarOpt} and \ref{subfig:L2Err_nVarOpt} suggest that the distribution of the optimal $\alpha^*$ is Gaussian and converges to a limit for increasing $m$. The distribution of the corresponding $\ell_2$-error looks Gaussian as well and seems to concentrate while shifting to larger mean values. For the discrepancy principle, Figures \ref{subfig:L2Alpha_nVarDP} and \ref{subfig:L2Err_nVarDP} show that the distribution of $\alphaHatDP$ widens for increasing $m$, and the distribution of the corresponding $\ell_2$-error develops a tail while shifting to larger mean values. Figures \ref{subfig:L2Alpha_nVarPSURE} and \ref{subfig:L2Err_nVarPSURE} show that the distribution of $\alphaHatPSURE$ seems to converge to a limit for increasing $m$. The distribution of the corresponding $\ell_2$-error also develops a tail while shifting to larger mean values. For SURE, Figures \ref{subfig:L2Alpha_nVarGSURE} and \ref{subfig:L2Err_nVarGSURE} reveal that increasing $m$ leads to erratic, multimodal distributions: Compared to the other $\alpha$-distributions, the distribution of $\alphaHatGSURE$ includes a significant amount of very small values, and the corresponding $\ell_2$-error distributions range over very large values.

\begin{figure}[tb]
   \centering
\subfigure[][$\alpha^*$]{\includegraphics[width= 0.49\textwidth]{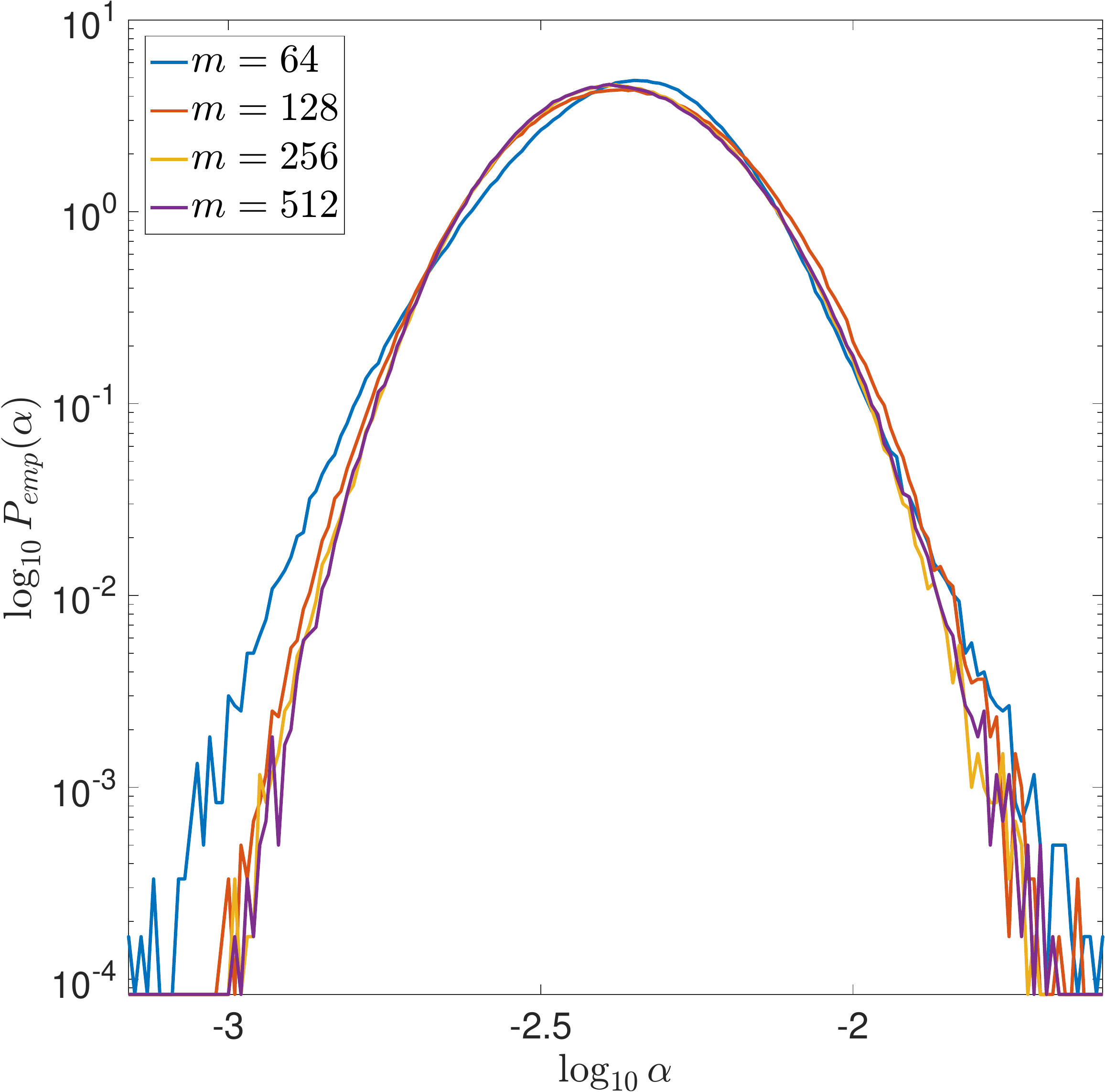}\label{subfig:L2Alpha_nVarOpt}}
\subfigure[][$\alphaHatDP$]{\includegraphics[width= 0.49\textwidth]{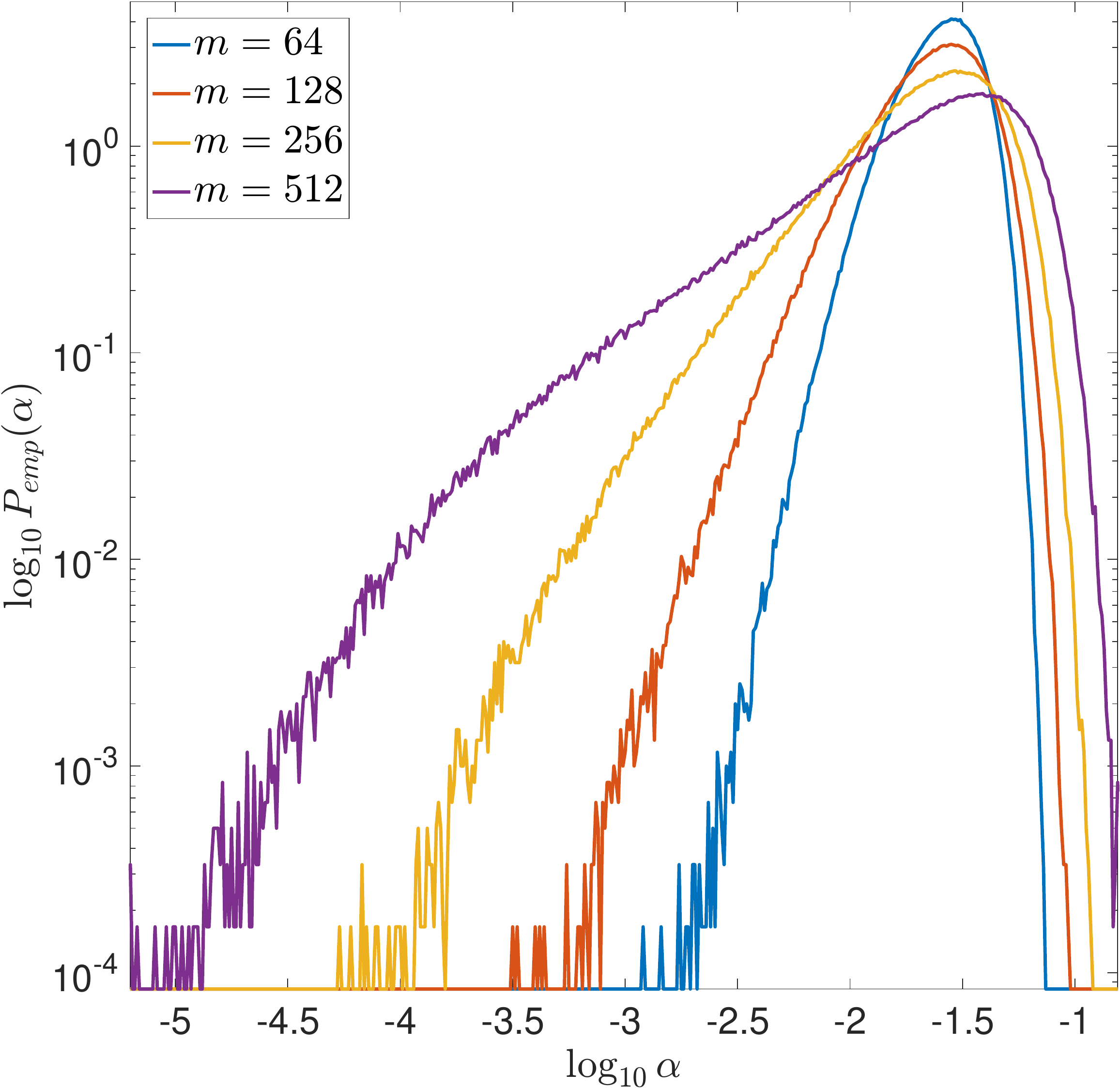}\label{subfig:L2Alpha_nVarDP}}\\
\subfigure[][$\alphaHatPSURE$]{\includegraphics[width= 0.49\textwidth]{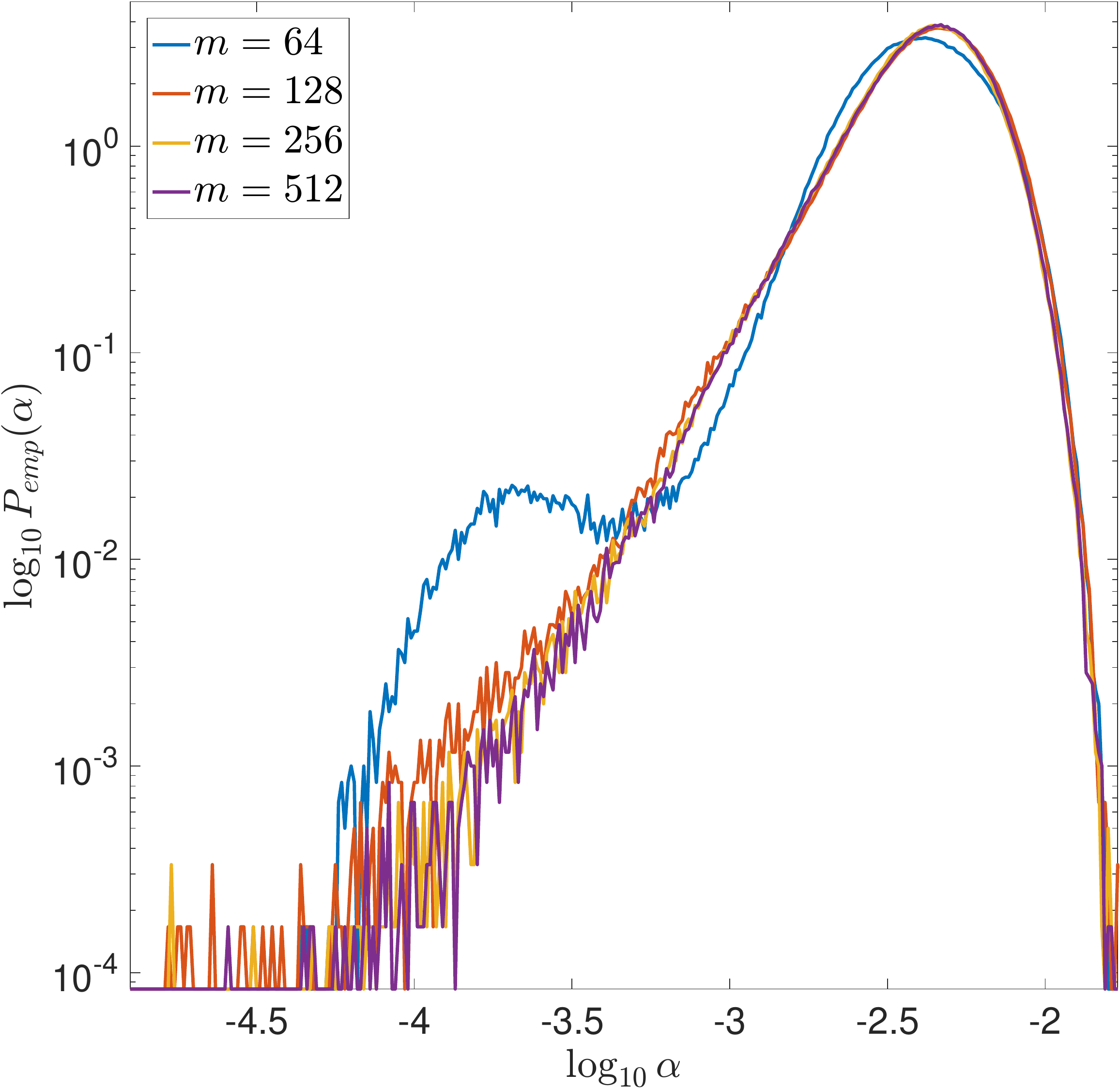}\label{subfig:L2Alpha_nVarPSURE}}
\subfigure[][$\alphaHatGSURE$]{\includegraphics[width= 0.49\textwidth]{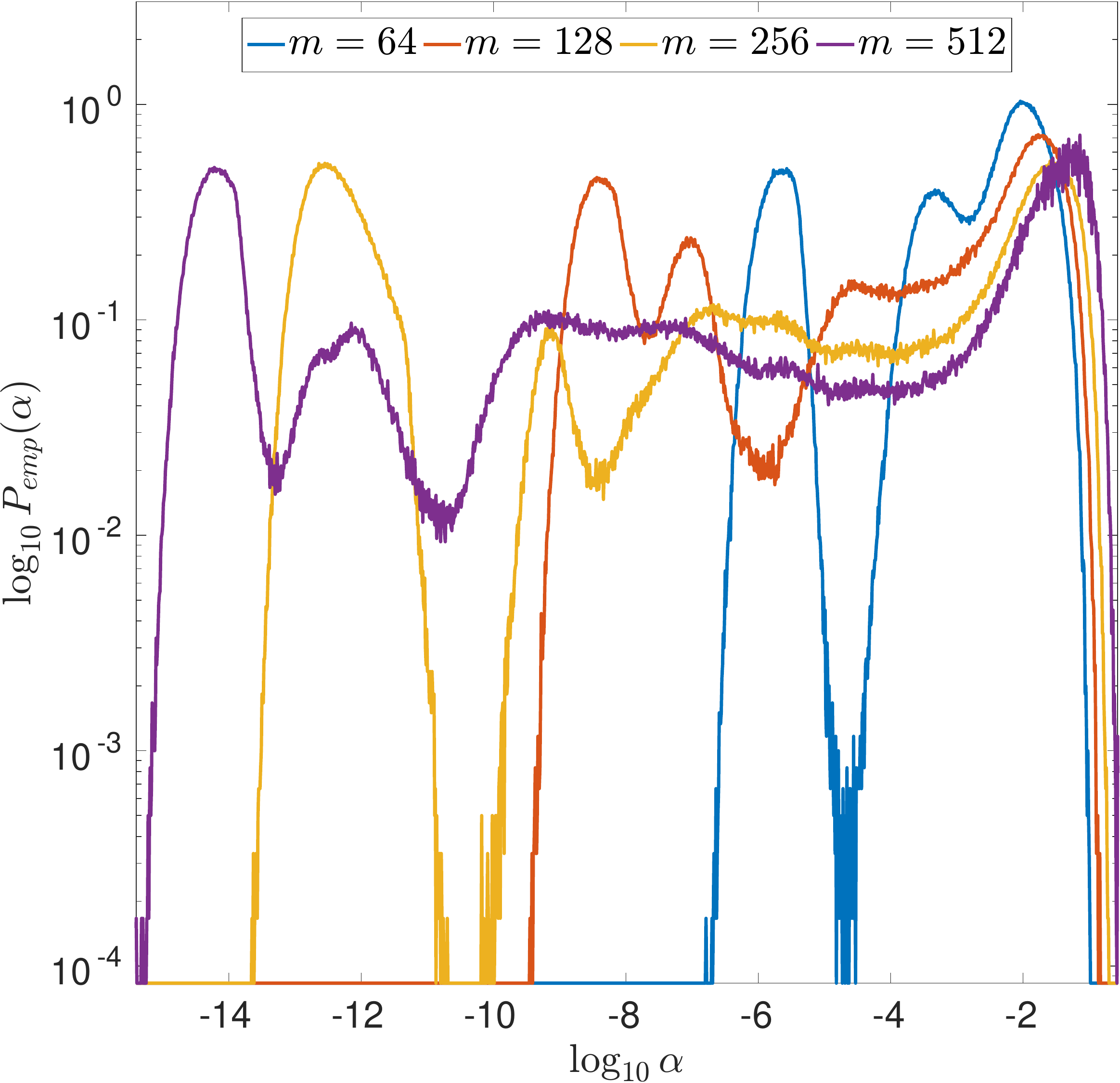}\label{subfig:L2Alpha_nVarGSURE}}
\caption{Empirical probabilities of $\alpha$ for $\ell_2$-regularization and different parameter choice rules for $l = 0.06$ and varying $m$. \label{fig:L2AlphaHist_nVarl06N6}}
\end{figure}

\begin{figure}[tb]
   \centering
\subfigure[][optimal]{\includegraphics[width= 0.49\textwidth]{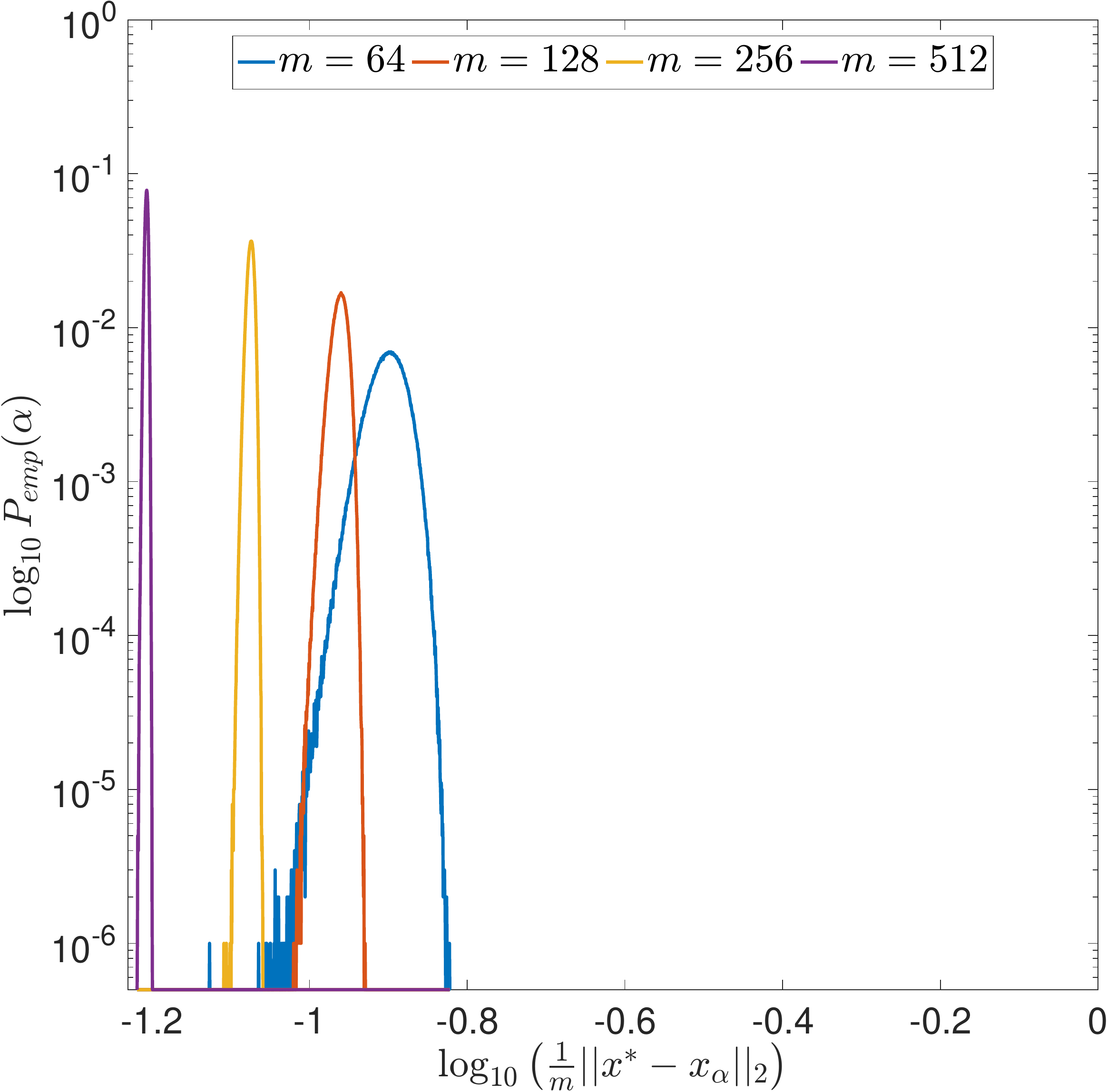}\label{subfig:L2Err_nVarOpt}}
\subfigure[][discrepancy principle]{\includegraphics[width= 0.49\textwidth]{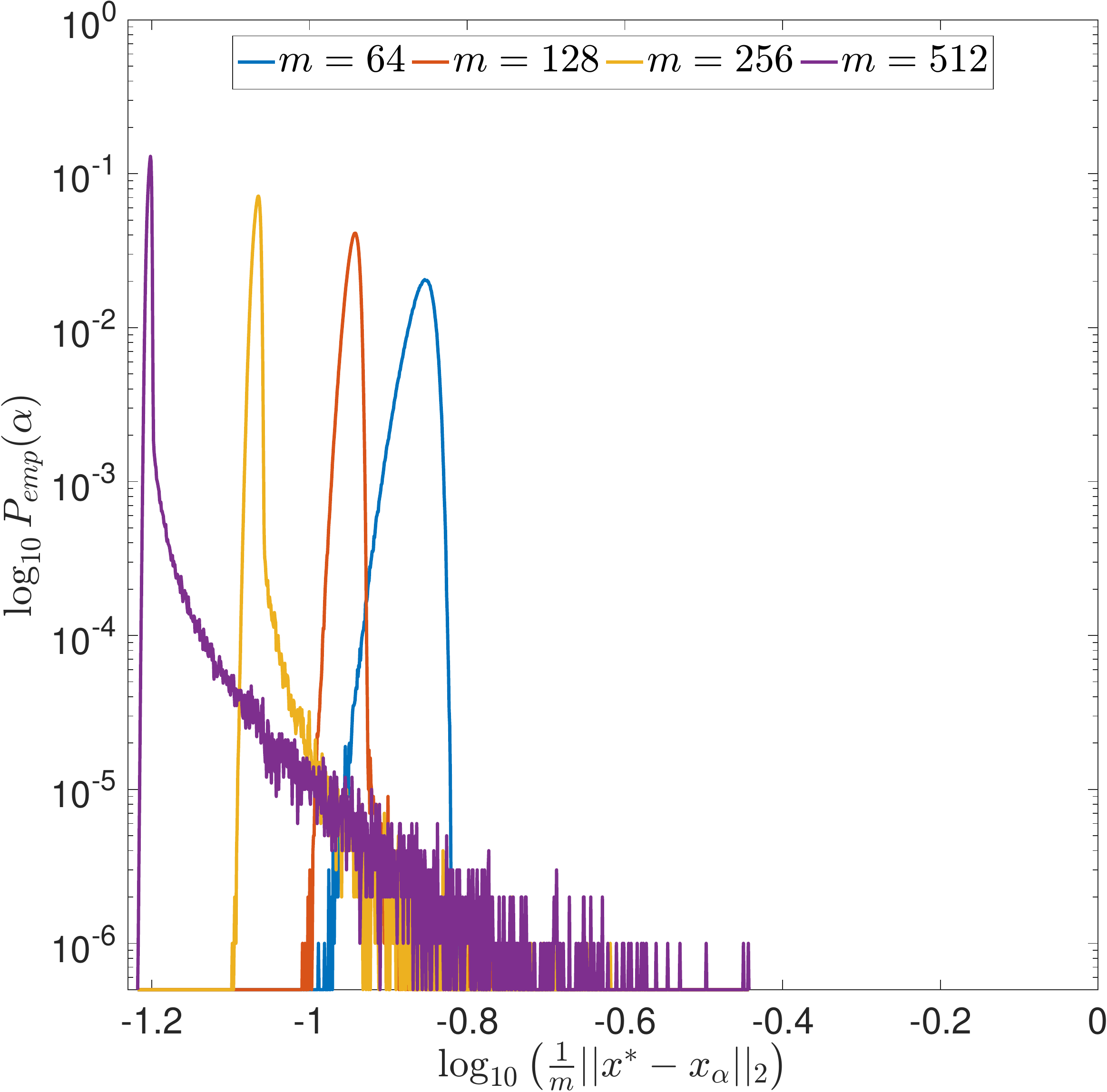}\label{subfig:L2Err_nVarDP}}\\
\subfigure[][PSURE]{\includegraphics[width= 0.49\textwidth]{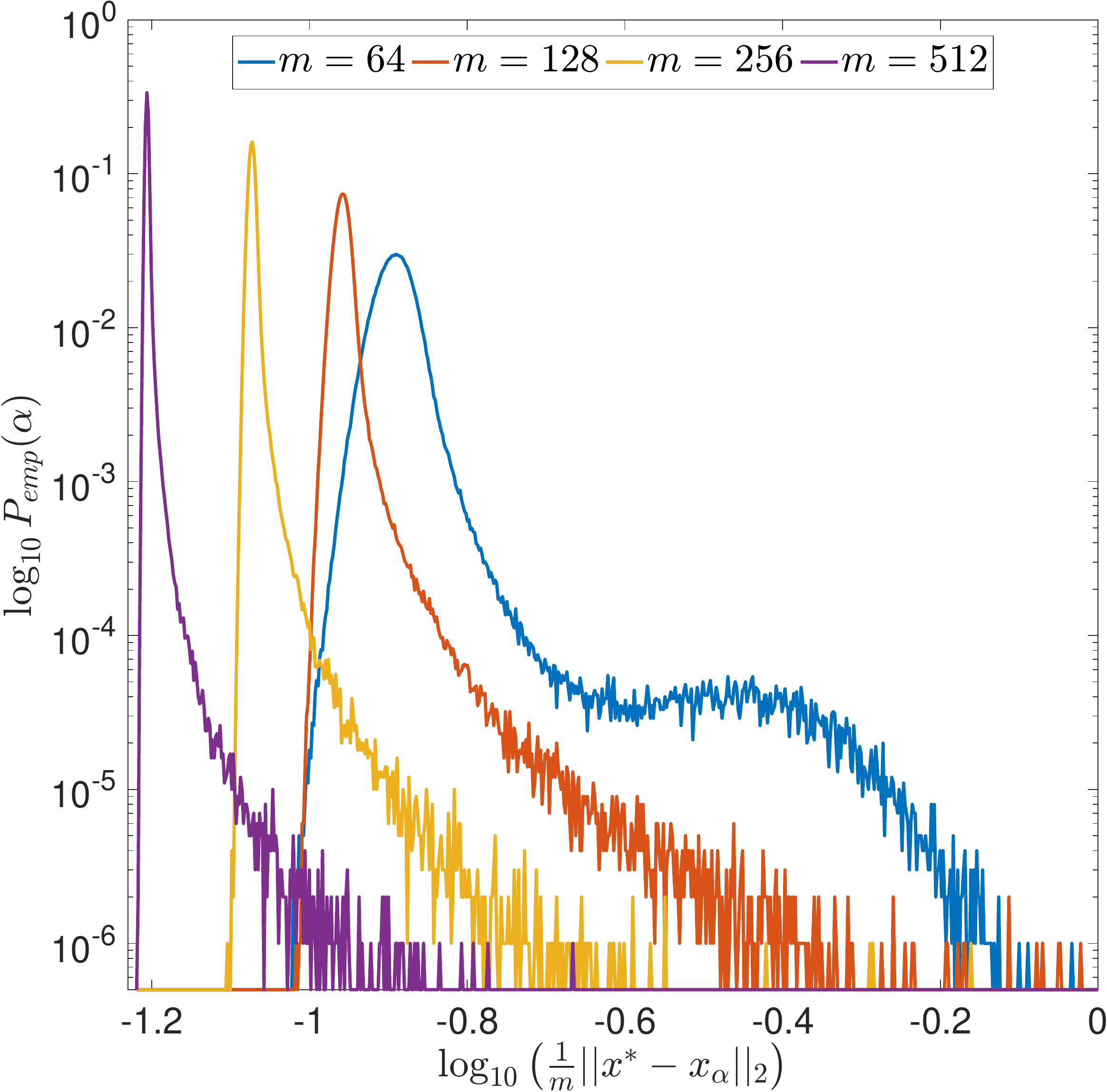}\label{subfig:L2Err_nVarPSURE}}
\subfigure[][SURE]{\includegraphics[width= 0.49\textwidth]{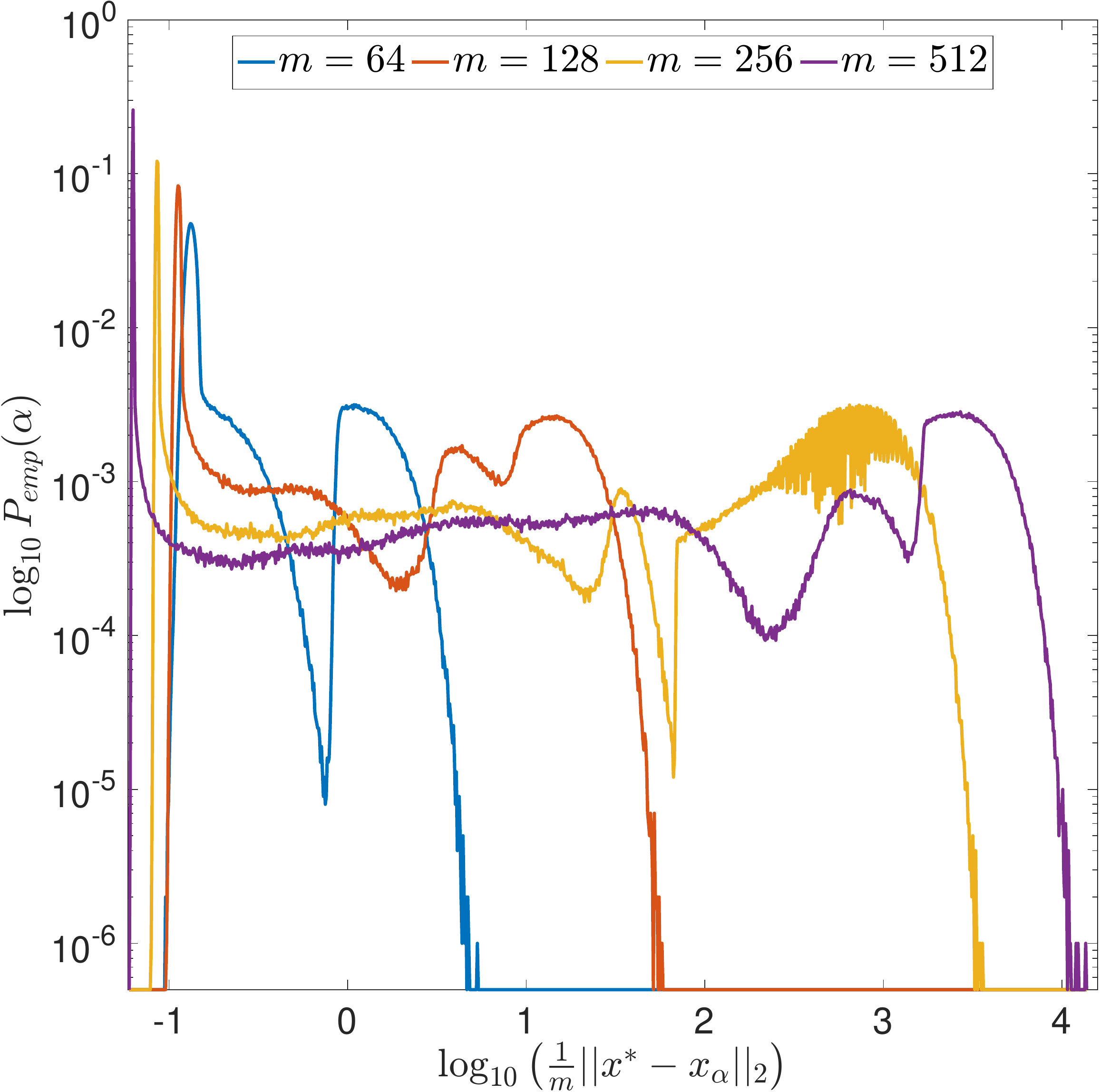}\label{subfig:L2Err_nVarGSURE}}
\caption{Empirical probabilities of $\log_{10}\left(\tfrac{1}{m} \| x^* - x_{\alpha} \|_2^2\right)$ for $\ell_2$-regularization and different parameter choice rules for $l = 0.06$ and varying $m$. \label{fig:L2ErrHist_nVarl06N6}}
\end{figure}

\paragraph{Dependence on $l$} 
In Figures \ref{fig:L2AlphaHist_n64Varl06N6} and \ref{fig:L2ErrHist_n64Varl06N6}, the width of the convolution kernel, $l$, is increased while $m=64$ is kept fix (cf. Figure \ref{fig:SingularValues} and Table \ref{tbl:CondA}). It is worth noticing that as $l=0.02$ corresponds to a very well-posed problem, the optimal $\alpha^*$ is often extremely small or even $0$, as can be seen from Figure \ref{subfig:L2Alpha_lVarOpt}. The general tendencies are similar to those observed when increasing $m$. For SURE, Figures \ref{subfig:L2Alpha_lVarGSURE} and \ref{subfig:L2Err_lVarGSURE} illustrate how the multiple modes of the distributions slowly evolve and shift to smaller vales of $\alpha$ (and larger corresponding $\ell_2$-errors).

\begin{figure}[tb]
   \centering
\subfigure[][$\alpha^*$]{\includegraphics[width= 0.49\textwidth]{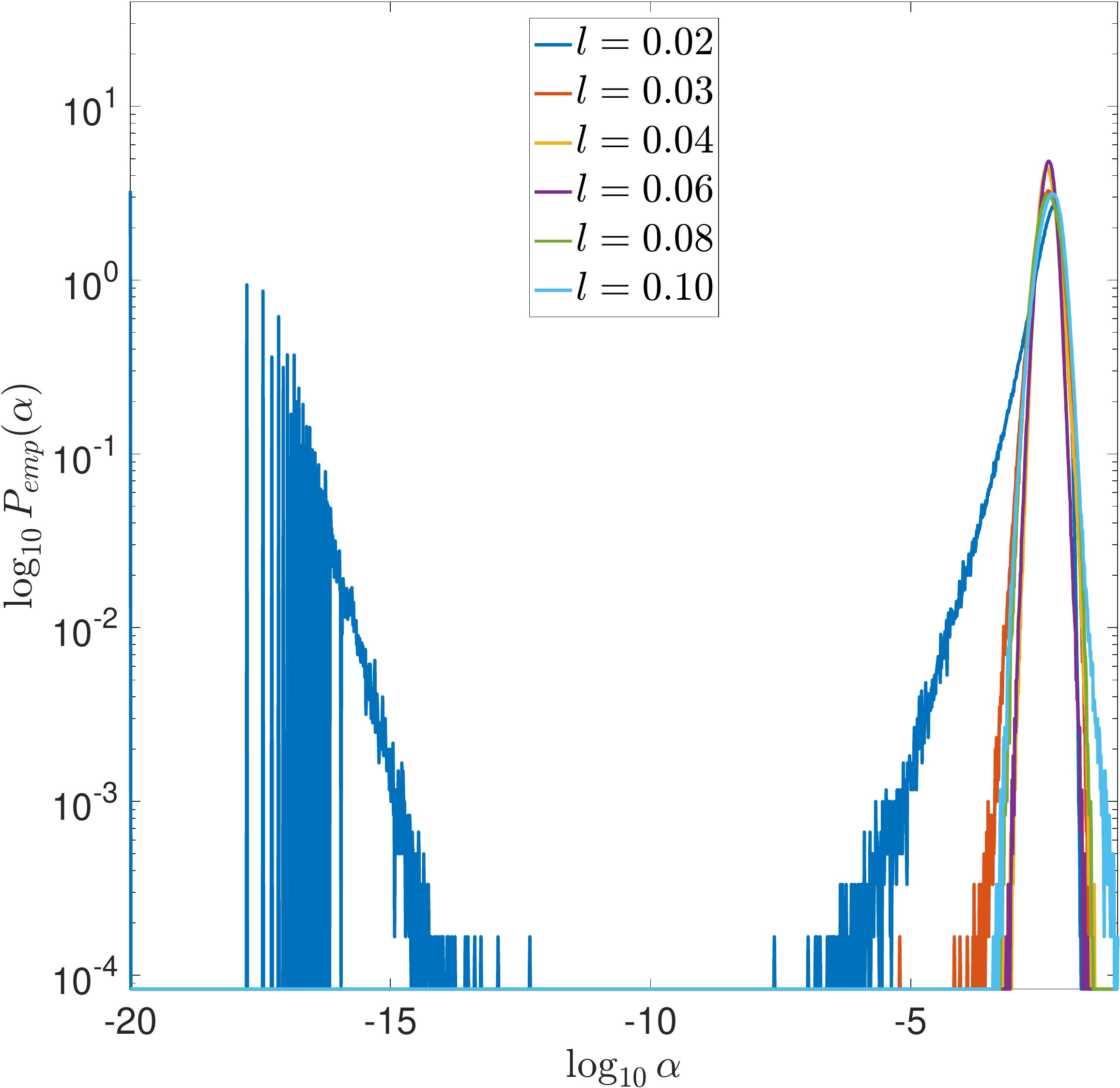}\label{subfig:L2Alpha_lVarOpt}}
\subfigure[][$\alphaHatDP$]{\includegraphics[width= 0.49\textwidth]{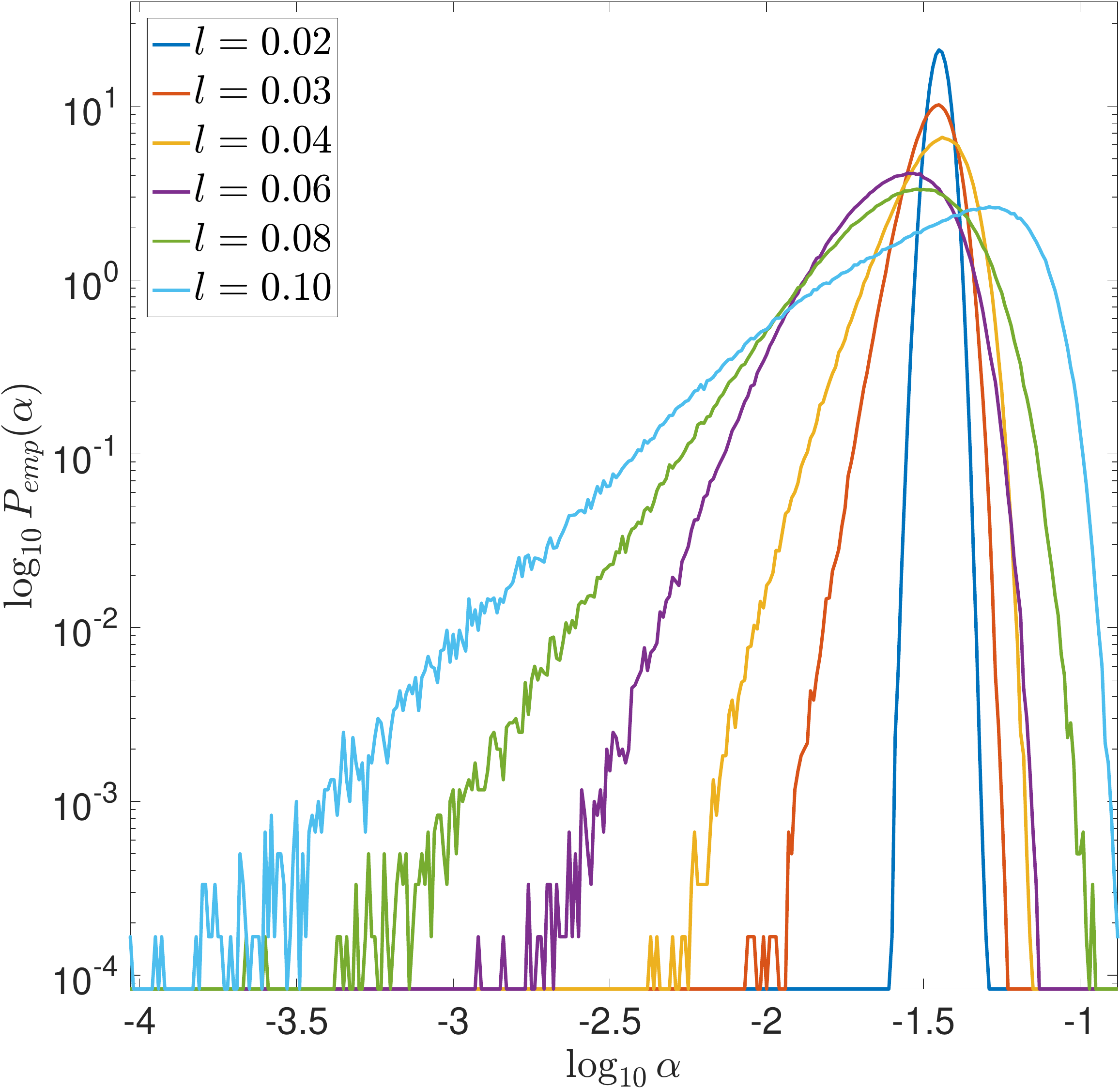}\label{subfig:L2Alpha_lVarDP}}\\
\subfigure[][$\alphaHatPSURE$]{\includegraphics[width= 0.49\textwidth]{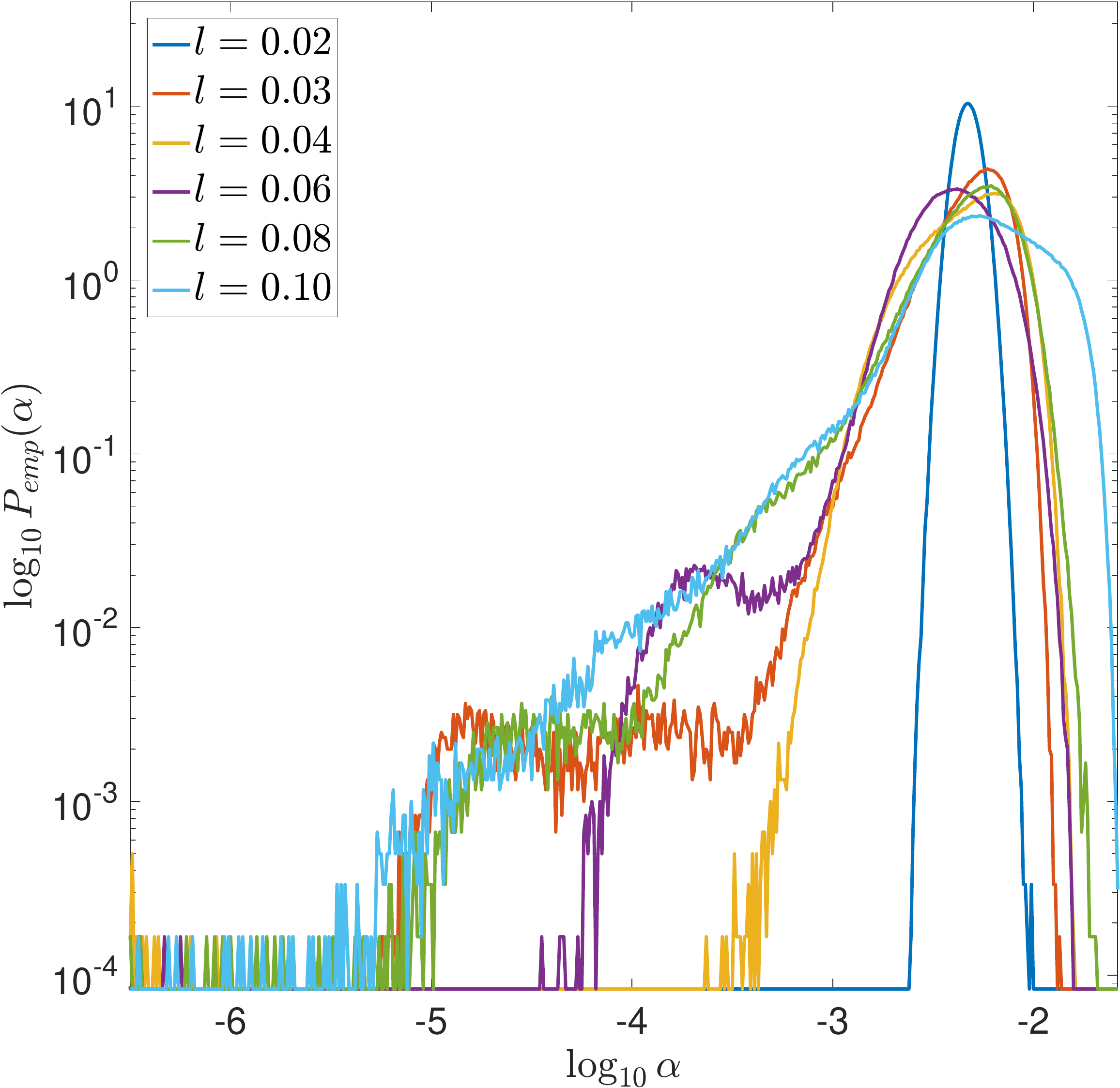}\label{subfig:L2Alpha_lVarPSURE}}
\subfigure[][$\alphaHatGSURE$]{\includegraphics[width= 0.49\textwidth]{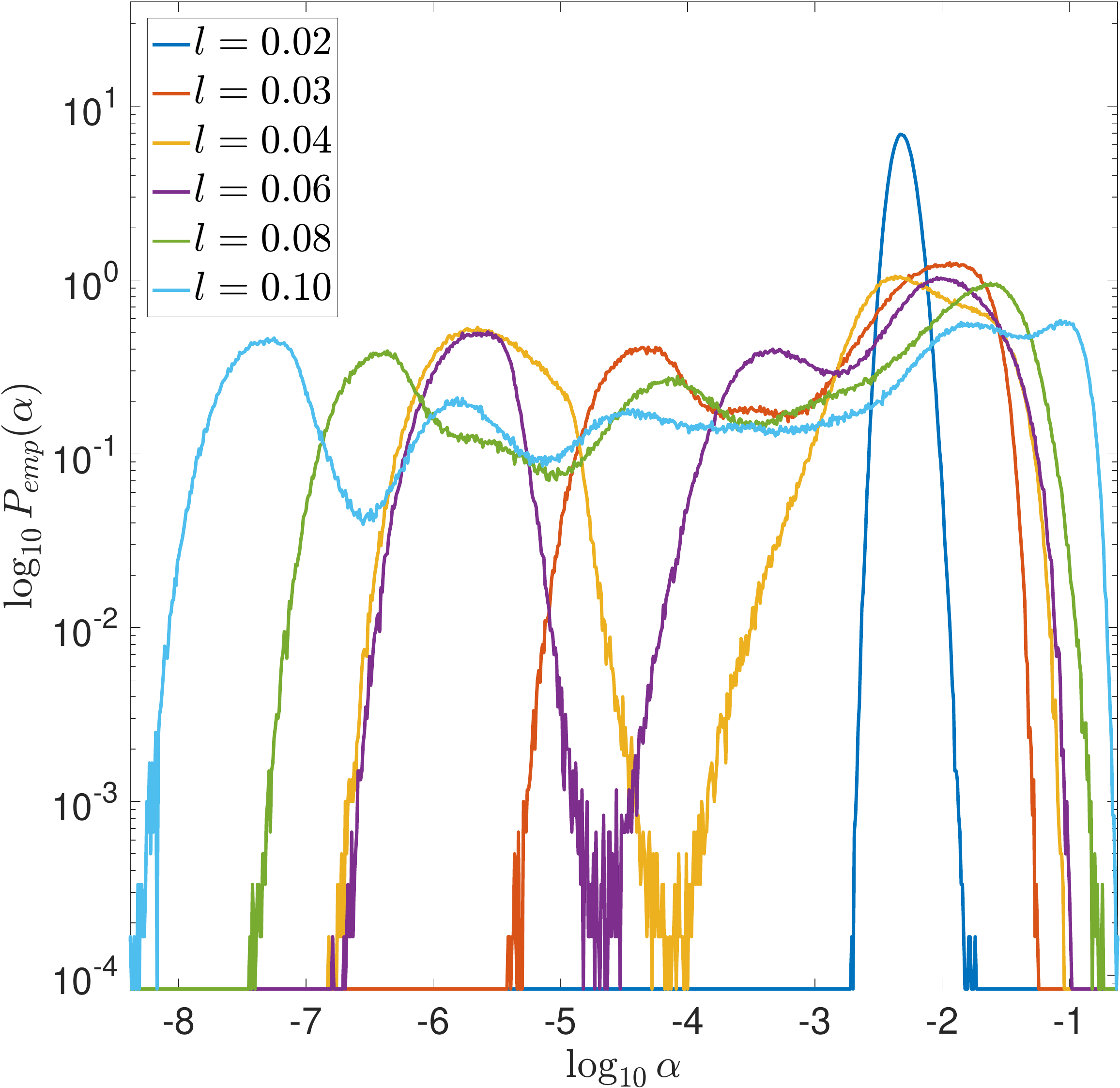}\label{subfig:L2Alpha_lVarGSURE}}
\caption{Empirical probabilities of $\alpha$ for $\ell_2$-regularization and different parameter choice rules for $m=64$ and varying $l$. \label{fig:L2AlphaHist_n64Varl06N6}}
\end{figure}

\begin{figure}[tb]
   \centering
\subfigure[][optimal]{\includegraphics[width= 0.49\textwidth]{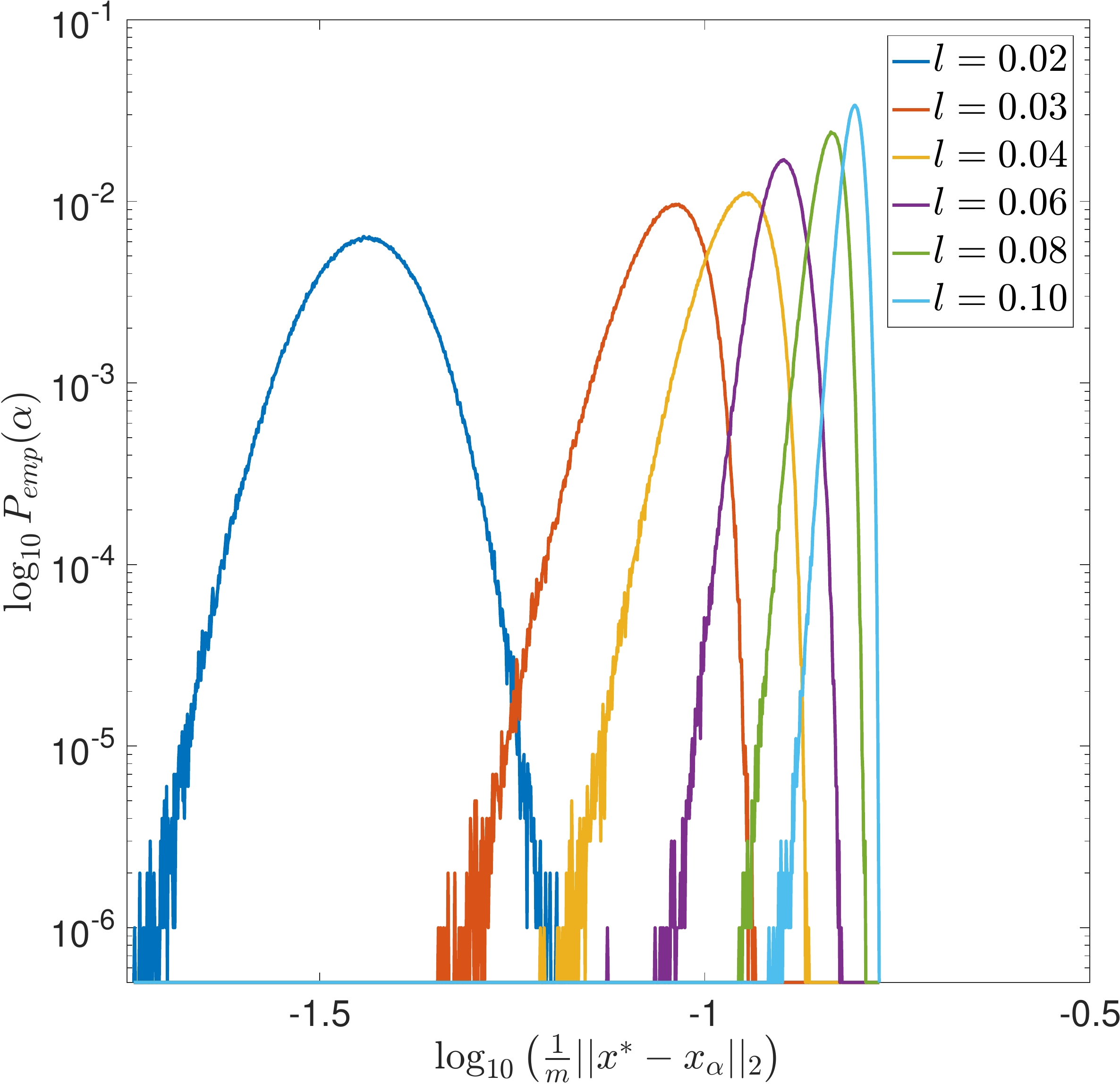}\label{subfig:L2Err_lVarOpt}}
\subfigure[][discrepancy principle]{\includegraphics[width= 0.49\textwidth]{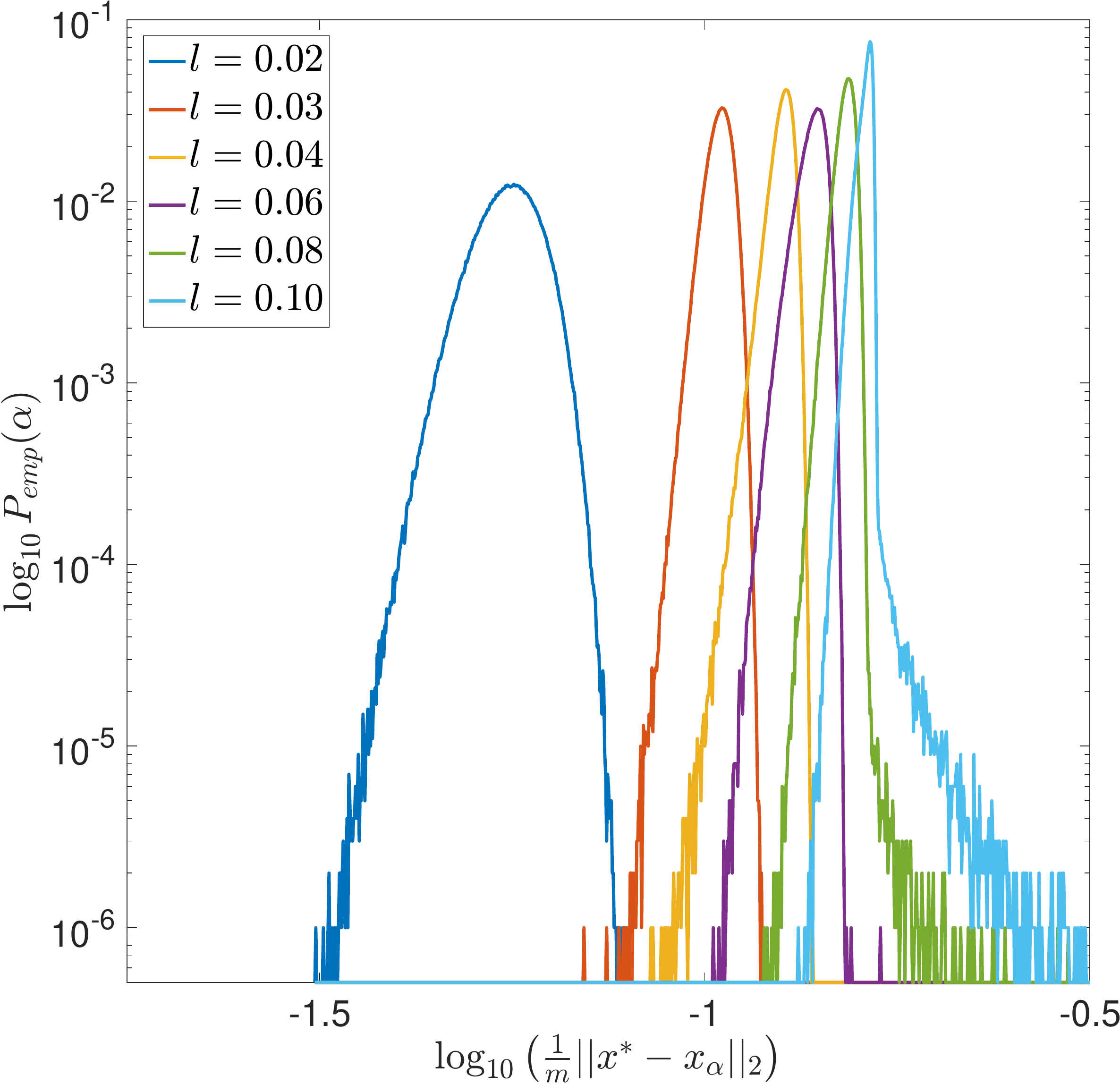}\label{subfig:L2_lVarDP}}\\
\subfigure[][PSURE]{\includegraphics[width= 0.49\textwidth]{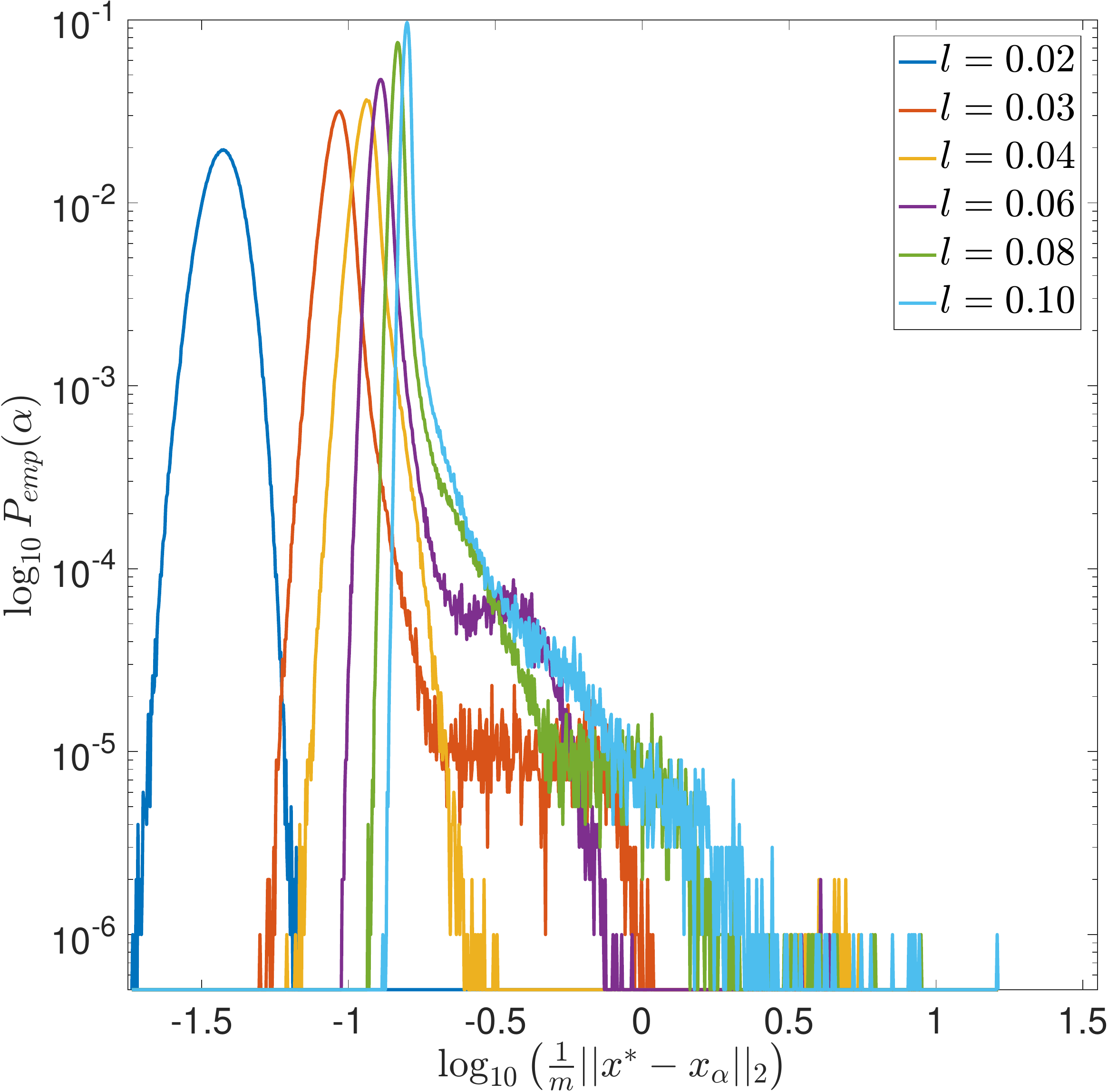}\label{subfig:L2Err_lVarPSURE}}
\subfigure[][SURE]{\includegraphics[width= 0.49\textwidth]{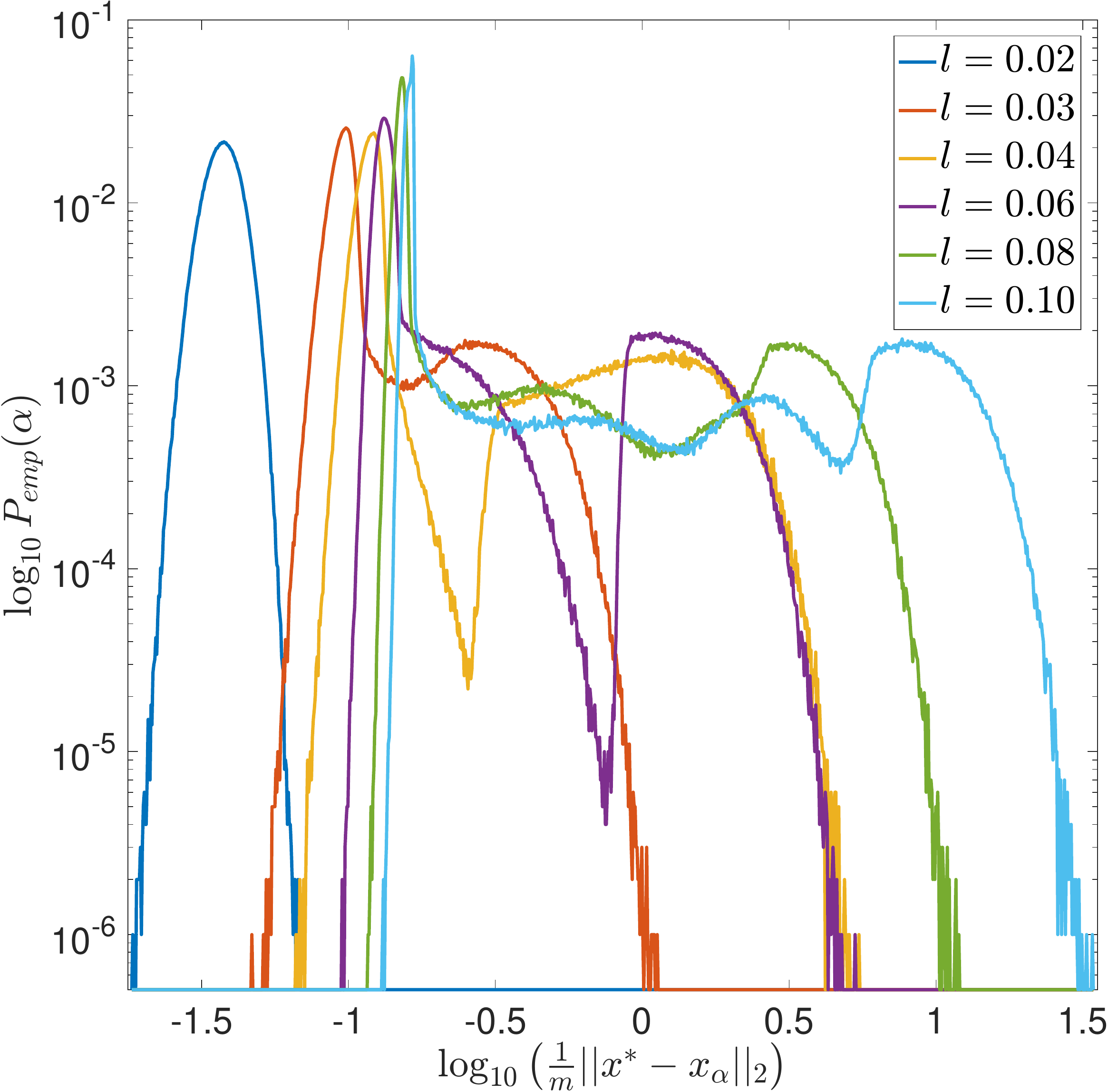}\label{subfig:L2Err_lVarGSURE}}
\caption{Empirical probabilities of $\log_{10}\left(\tfrac{1}{m} \| x^* - x_{\alpha} \|_2^2\right)$ for $\ell_2$-regularization and different parameter choice rules for $m=64$ and varying $l$. \label{fig:L2ErrHist_n64Varl06N6}}
\end{figure}

\subsection{Linear vs Logarithmical Grids} \label{subsec:LinVsLog}

One reason why the properties of SURE exposed in this work have not been noticed so far is that they only become apparent in very ill-conditioned problems (cf. Section \ref{sec:Intro}). Another reason is the way the risk estimators are typically computed: Firstly, for high dimensional problems, \eqref{eq:ExplTikhonov} often needs to be solved by an iterative method. For very small $\alpha$, the condition of $(A^* A + \alpha  I)$ is very large and the solver will need a lot of iterations to reach a given tolerance. If, instead, a fixed number of iterations is used, an additional regularization of the solution to \eqref{model} is introduced which alters the risk function. Secondly, again due to the computational effort, a coarse, linear $\alpha$-grid excluding $\alpha = 0$ instead of a fine, logarithmic one is often used for evaluating the risk estimators. For two of the risk estimations plotted in Figure \ref{subfig:L2GSURErisk}, Figure \ref{fig:L2LinVsLog} demonstrates that this insufficient coverage of small $\alpha$ values by the grid can lead to missing the global minimum and other misinterpretations.

\begin{figure}[tb]
   \centering
\subfigure[][]{\includegraphics[height= 0.47\textwidth]{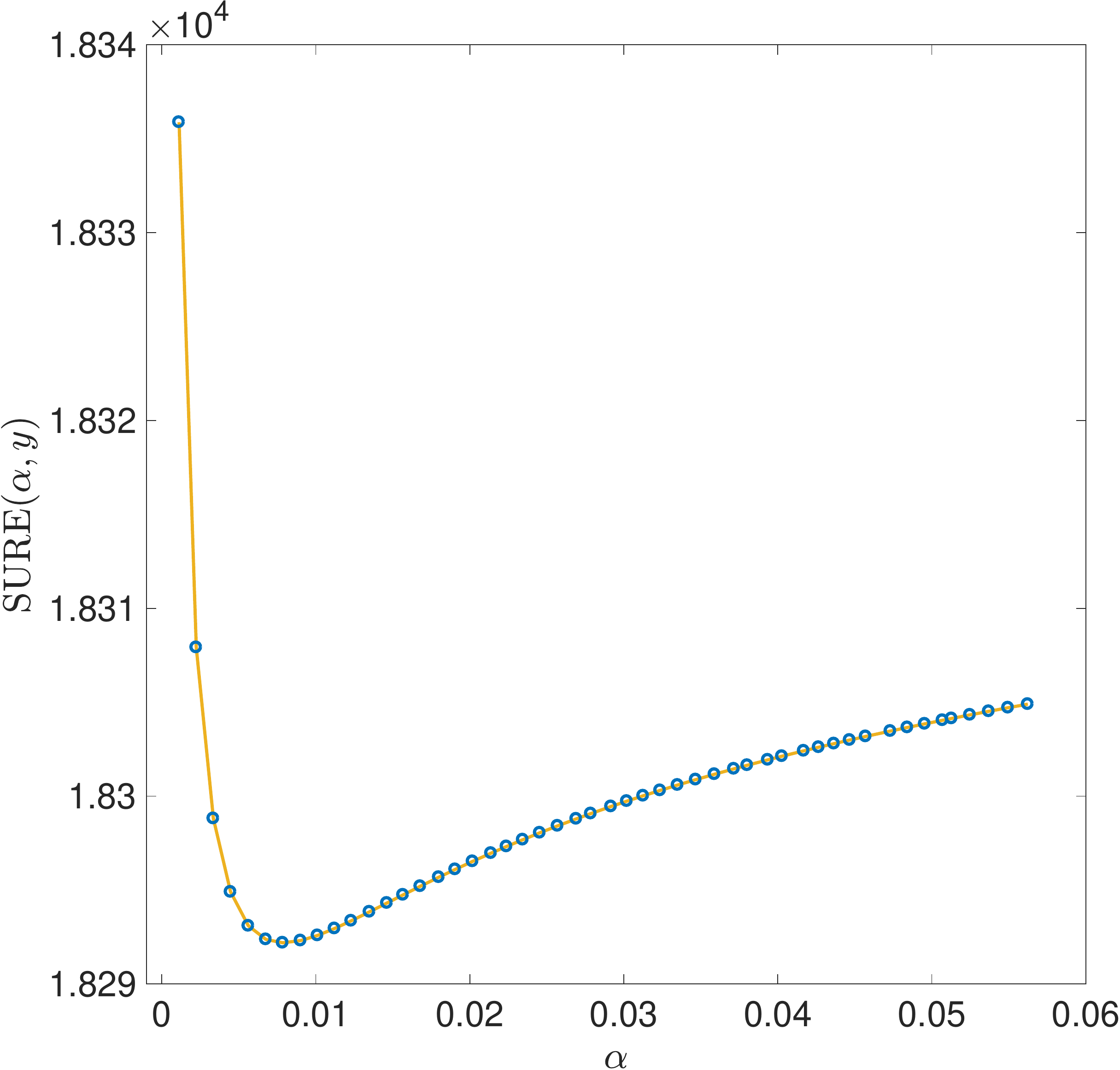}\label{subfig:L2LinVsLog2lin}}
\subfigure[][]{\includegraphics[height= 0.475\textwidth]{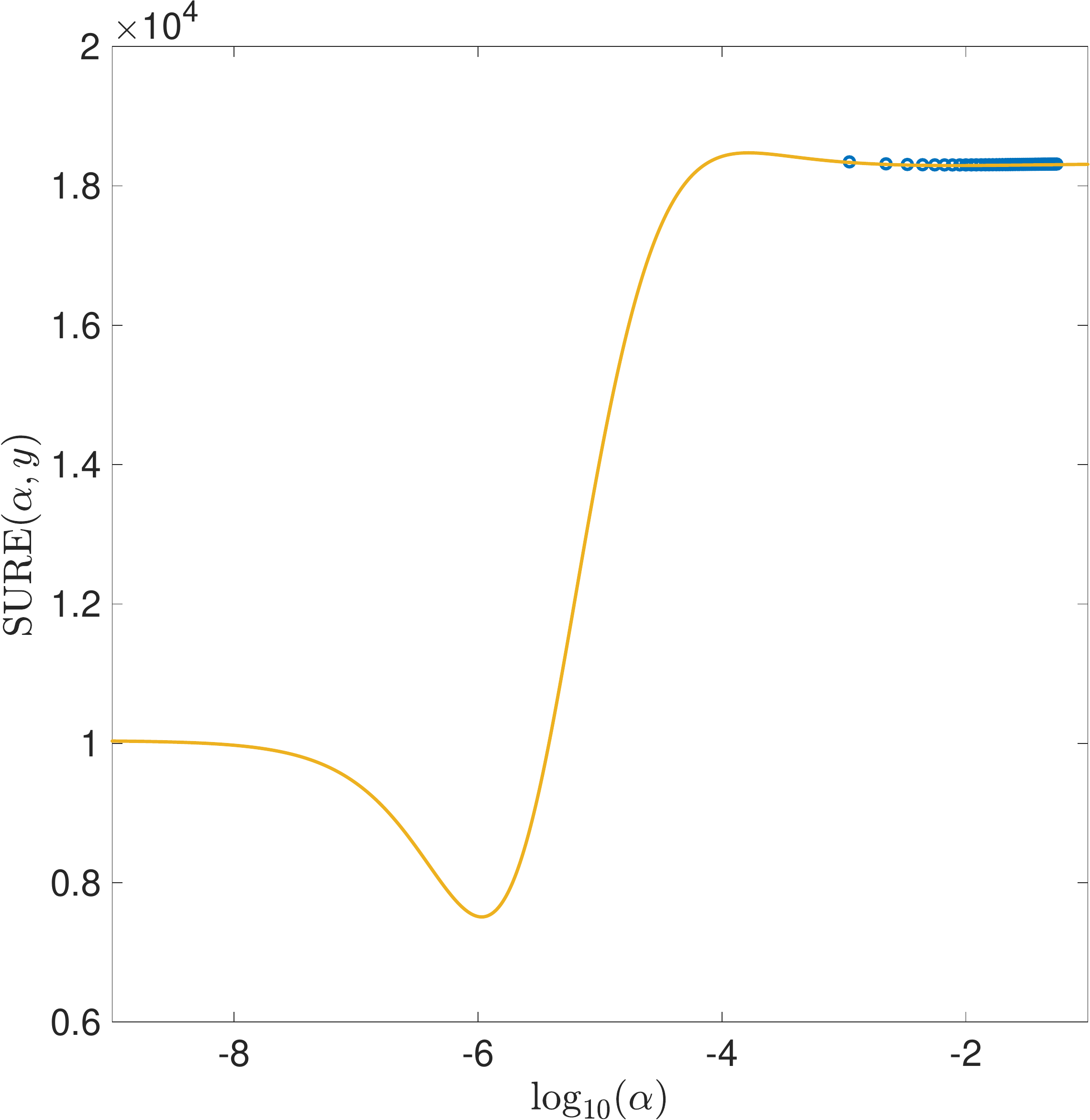}\label{subfig:L2LinVsLog2log}}\\
\subfigure[][]{\includegraphics[height= 0.455\textwidth]{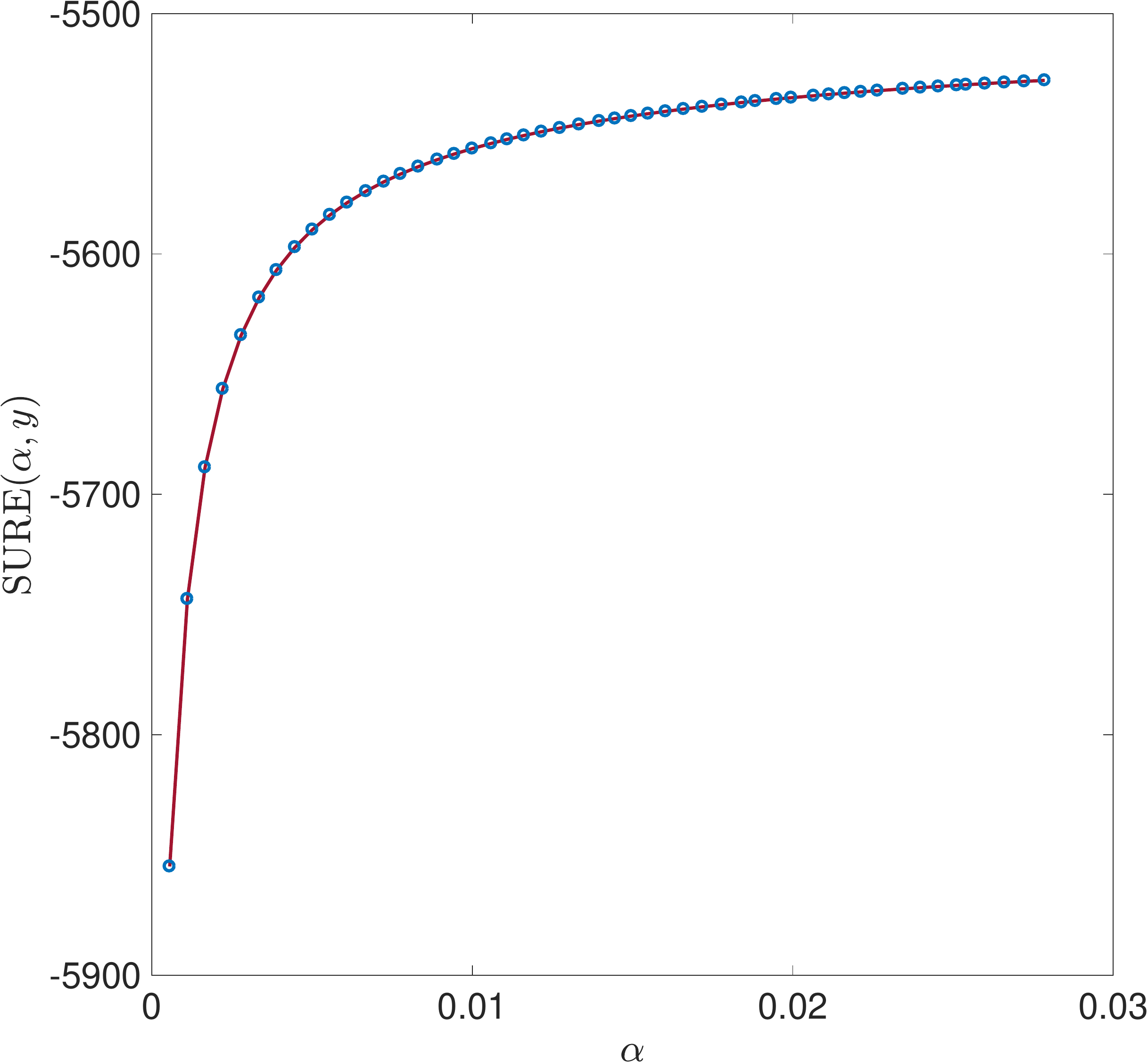}\label{subfig:L2LinVsLog7lin}}
\subfigure[][]{\includegraphics[height= 0.45\textwidth]{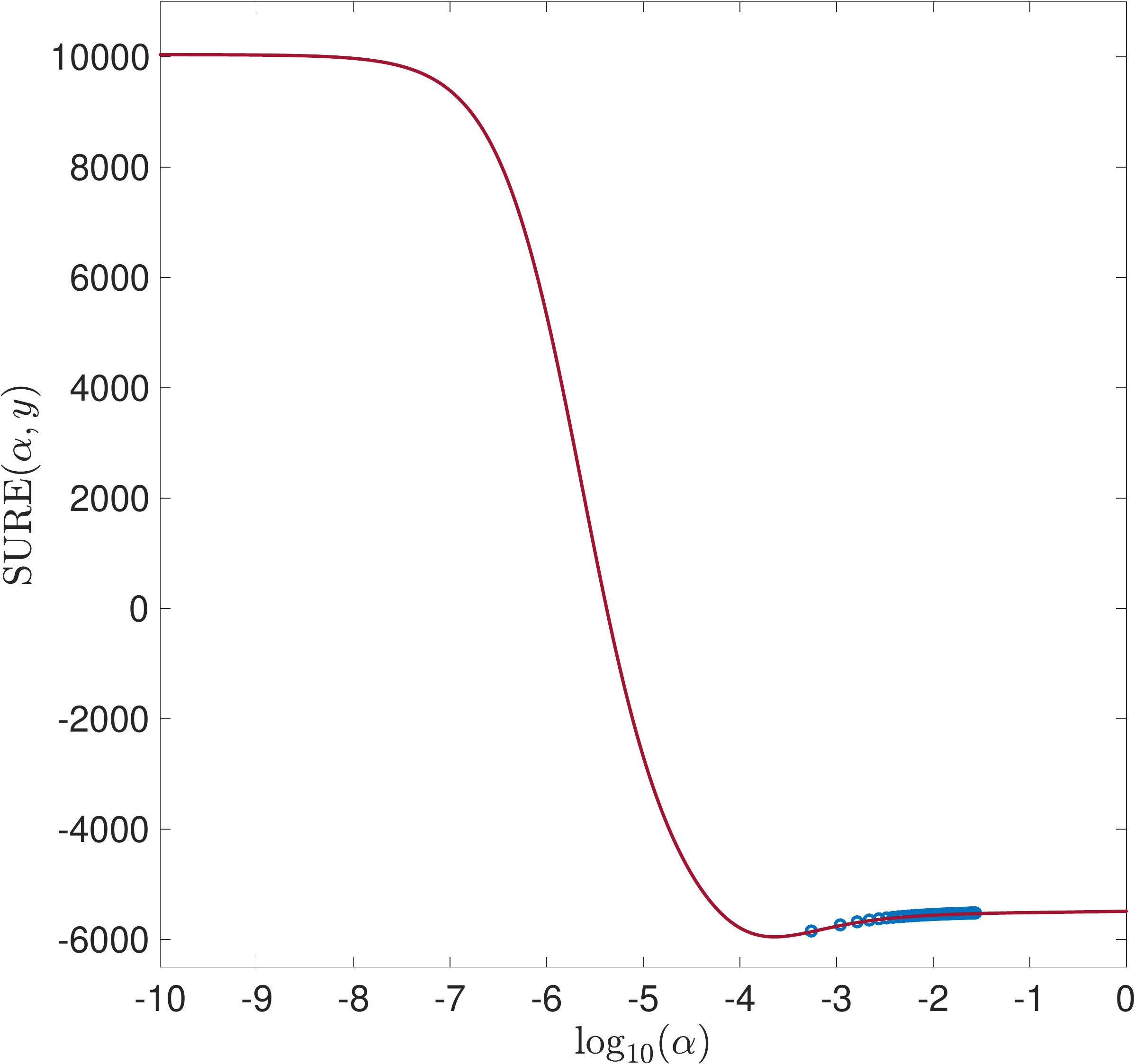}\label{subfig:L2LinVsLog7log}}
\caption{Illustration of the difference between evaluating the SURE risk on a coarse, linear grid for $\alpha$ as opposed to a fine, logarithmic one: In \subref{subfig:L2LinVsLog2lin}, a linear grid is constructed around $\alphaHatDP$ as $\alpha = \Delta_\alpha,2 \Delta_\alpha,\ldots,50\Delta_\alpha$ with $\Delta_\alpha = 2 \alphaHatDP/50$. While the plot suggests a clear minimum, \subref{subfig:L2LinVsLog2log} reveals that it is only a sub-optimal local minimum and that the linear grid did not cover the essential parts of $\gsure(\alpha,y)$. \subref{subfig:L2LinVsLog7lin} and \subref{subfig:L2LinVsLog7log} show the same plots for a different noise realization. Here, a linear grid will not even find a clear minimum. Both risk estimators are the same as those plotted in Figure \ref{subfig:L2GSURErisk} with the same colors. \label{fig:L2LinVsLog}}
\end{figure}

\section{Numerical Studies for Non-Quadratic Regularization} \label{sec:NumStudiesL1}

In this section, we consider the popular sparsity-inducing $R(x) = \norm{ x }_1$ as a regularization functional (LASSO penalty) to examine whether our results also apply to non-quadratic regularization functionals. For this, let $I$ be the support of $\hat{x}_\alpha(y)$ and $J$ its complement. Let further $|I| = k$ and $P_I \in \R^{k \times n}$ be a projector onto $I$ and $A_I$ the restriction of $A$ to $I$. We have that   
\begin{equation*}
\text{df}_\alpha = \|\hat{x}_\alpha(y)\|_0 = k \qquad \quad \text{and} \qquad \quad \text{gdf}_\alpha = \trace(\Pi B^{[J]}), \quad B^{[J]} := P_I (A_I^* A_I)^{-1} P_I^*,
\end{equation*}
as shown, e.g., in \cite{Vaiter2014,Dossal2013,Deledalle2012}, which allows us to compute PSURE \eqref{PSURE} and SURE \eqref{defGSURE}. Notice that while $\hat x_\alpha(y)$ is a continuous function of $\alpha$ \cite{bringmann2016homotopy}, PSURE and SURE are discontinuous at all $\alpha$ where the support $I$ changes.\\
To carry out similar numerical studies as those presented the last section, we have to overcome several non-trivial difficulties: While there exist various iterative optimization techniques to solve \eqref{eq:VarReg} nowadays (see, e.g., \cite{BuSaSt14}), each method typically only works well for certain ranges of $\alpha$, $\cond(A)$ and tolerance levels to which the problem should be solved. In addition, each method comes with internal parameters that have to be tuned for each problem separately to obtain fast convergence. As a result, it is difficult to compute a consistent series of $\hat x_\alpha(y)$ for a given logarithmical $\alpha$-grid, i.e., that accurately reproduces all the change-points in the support and has a uniform accuracy over the grid. Our solution to this problem is to use an all-at-once implementation of ADMM \cite{BoPaChPeEc11} that solves \eqref{eq:VarReg} for the whole $\alpha$-grid simultaneously, i.e., using exactly the same initialization, number of iterations and step sizes. See Appendix \ref{sec:ADMM} for details. In addition, an extremely small tolerance level ($tol = 10^{-14}$) and $10^4$ maximal iterations were used to ensure a high accuracy of the solutions. \\
Another problem for computing quantities like \eqref{supExpNum} is that we cannot compute the expectations defining the real risks $\mspe$ \eqref{PSURE} and $\msee$ \eqref{defGSURE} anymore: We have to estimate them as the sample mean over PSURE and SURE in a first run of the studies, before we can compute \eqref{supExpNum} in a second run (wherein $\mspe$ and $\msee$ are replaced by the estimates from the first run).\\
We considered scenarios with each combination of $m=n=16, 32, 64, 128, 256, 512$, $l = 0.02, 0.04, 0.06$ and $\sigma = 0.1$. Depending on $m$, $N_\varepsilon = 10^5,10^4,10^4,10^4,10^3,10^3$ noise realizations were examined. The computation was based on a logarithmical $\alpha$-grid where $\log_{10} \alpha$ is increased linearly in between -10 and 10 with a step size of $0.01$.

\paragraph{Risk Plots:}

Figure \ref{fig:L1RiskPlots} shows the different risk functions and estimates thereof. The jagged form of the PSURE and SURE plots evaluated on this fine $\alpha$-grid indicates that the underlying functions are discontinuous. Also note that while PSURE and SURE for each individual noise realization are discontinuous, $\mspe$ and $\msee$ are smooth and continuous, as can be seen already from the empirical means over $N_\varepsilon = 10^4$.

\begin{figure}[tb]
   \centering
\subfigure[][DP]{\includegraphics[width= 0.32 \textwidth]{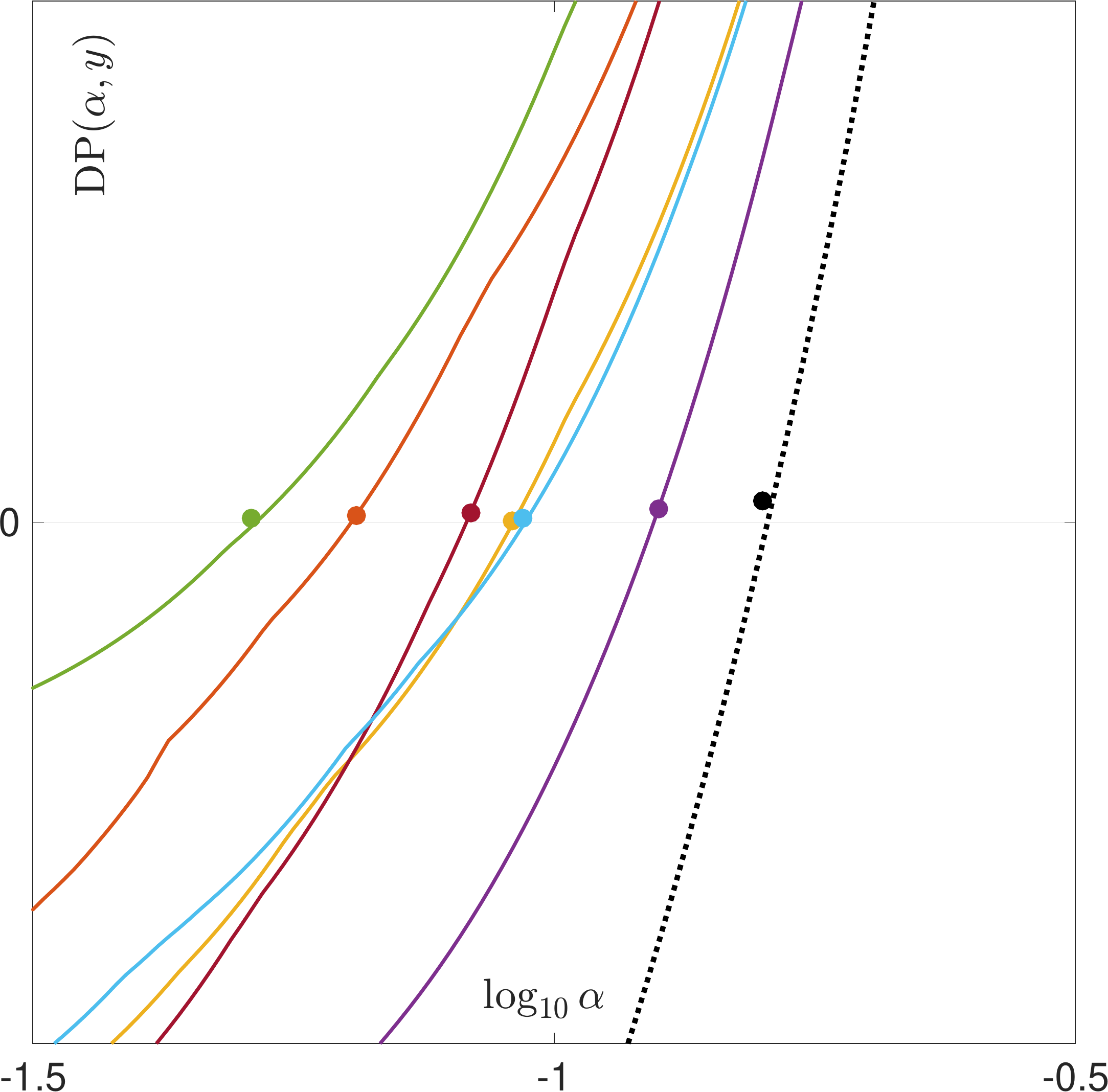}\label{subfig:L1DPrisk}}
\subfigure[][PSURE]{\includegraphics[width= 0.32\textwidth]{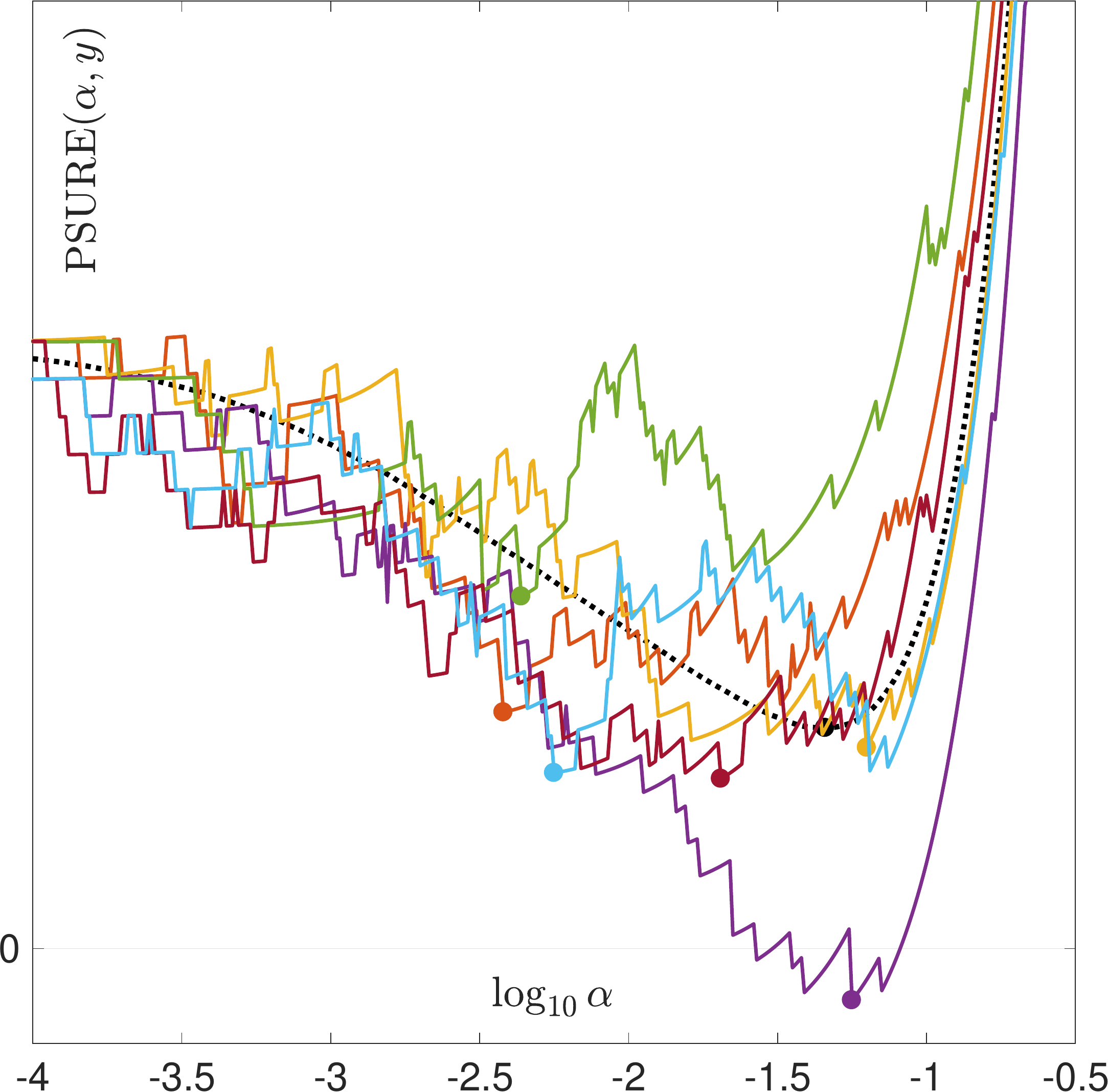}\label{subfig:L1PSURErisk}}
\subfigure[][SURE]{\includegraphics[width= 0.32\textwidth]{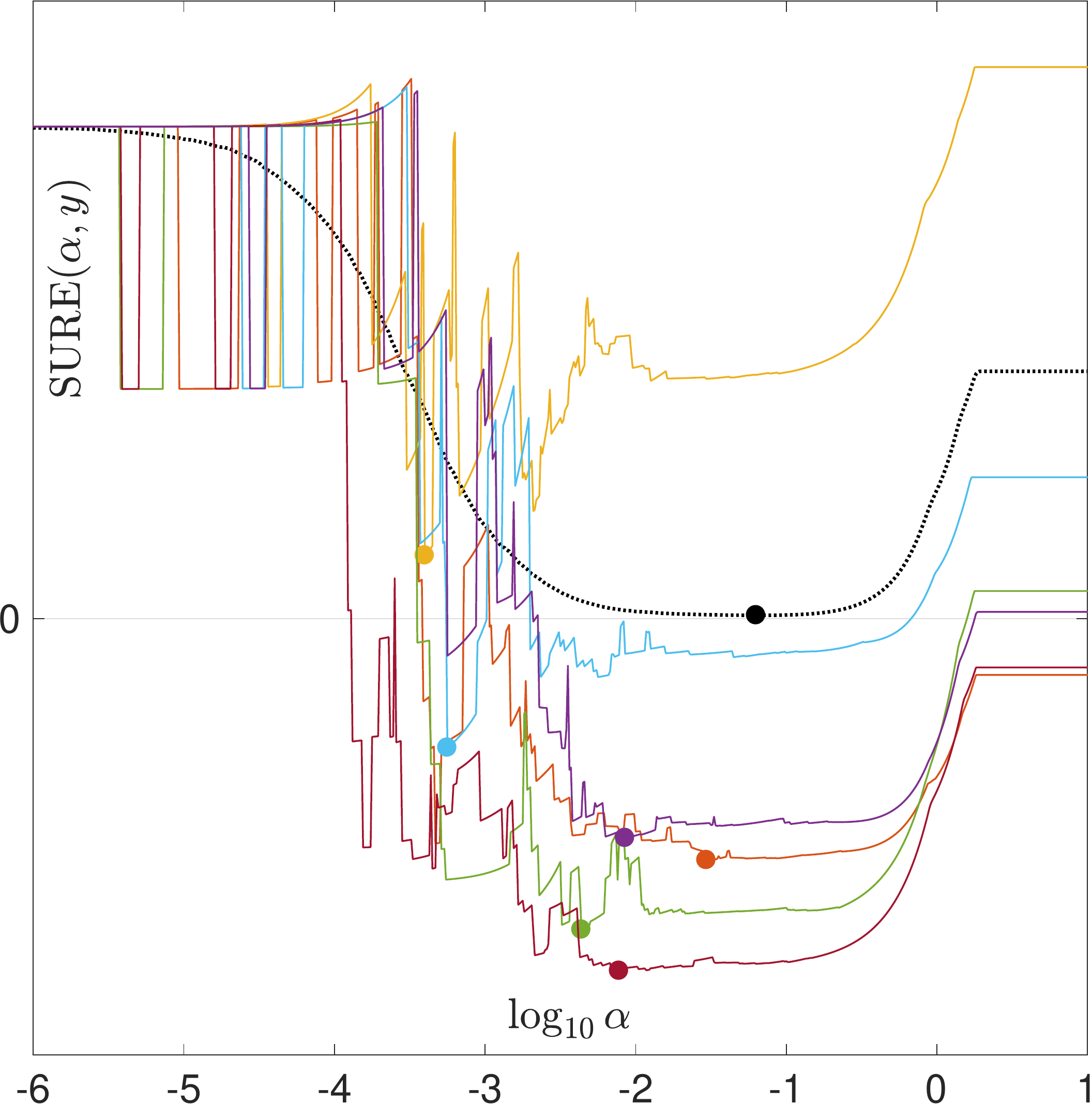}\label{subfig:L1GSURErisk}}
\caption{Risk functions (black dotted line), $k = 1,\ldots,6$ estimates thereof (solid lines) and their corresponding minima/roots (dots on the lines) in the setting described in Figure \ref{fig:Setting} using $\ell_1$-regularization: \subref{subfig:L1DPrisk} $\DP(\alpha,Ax^*)$ and $\DP(\alpha,y^k)$. \subref{subfig:L1PSURErisk} $\mspe(\alpha)$ (empirical mean over $N_\varepsilon = 10^4$) and $\psure(\alpha,y^k)$. \subref{subfig:L1GSURErisk} $\msee(\alpha)$ (empirical mean over $N_\varepsilon = 10^4$) and $\gsure(\alpha,y^k)$. \label{fig:L1RiskPlots}}
\end{figure}

\paragraph{Empirical Distributions:}

Figure \ref{fig:L1Hist_n64l06N6} shows the empirical distributions of the different parameter choice rules for $\alpha$.  Here, the optimal ${\alpha}^*$ is chosen as the one minimizing the $\ell_1$-error $\|x^* - x_{\hat \alpha} \|_1$ to the true solution $x^*$. We can observe similar phenomena as for $\ell_2$-regularization. In particular, the distributions for SURE, also have multiple modes at small values of $\alpha$ and at large values of $\ell_1$-error.

\begin{figure}[tb]
   \centering
\subfigure[][]{\includegraphics[height= 0.48\textwidth]{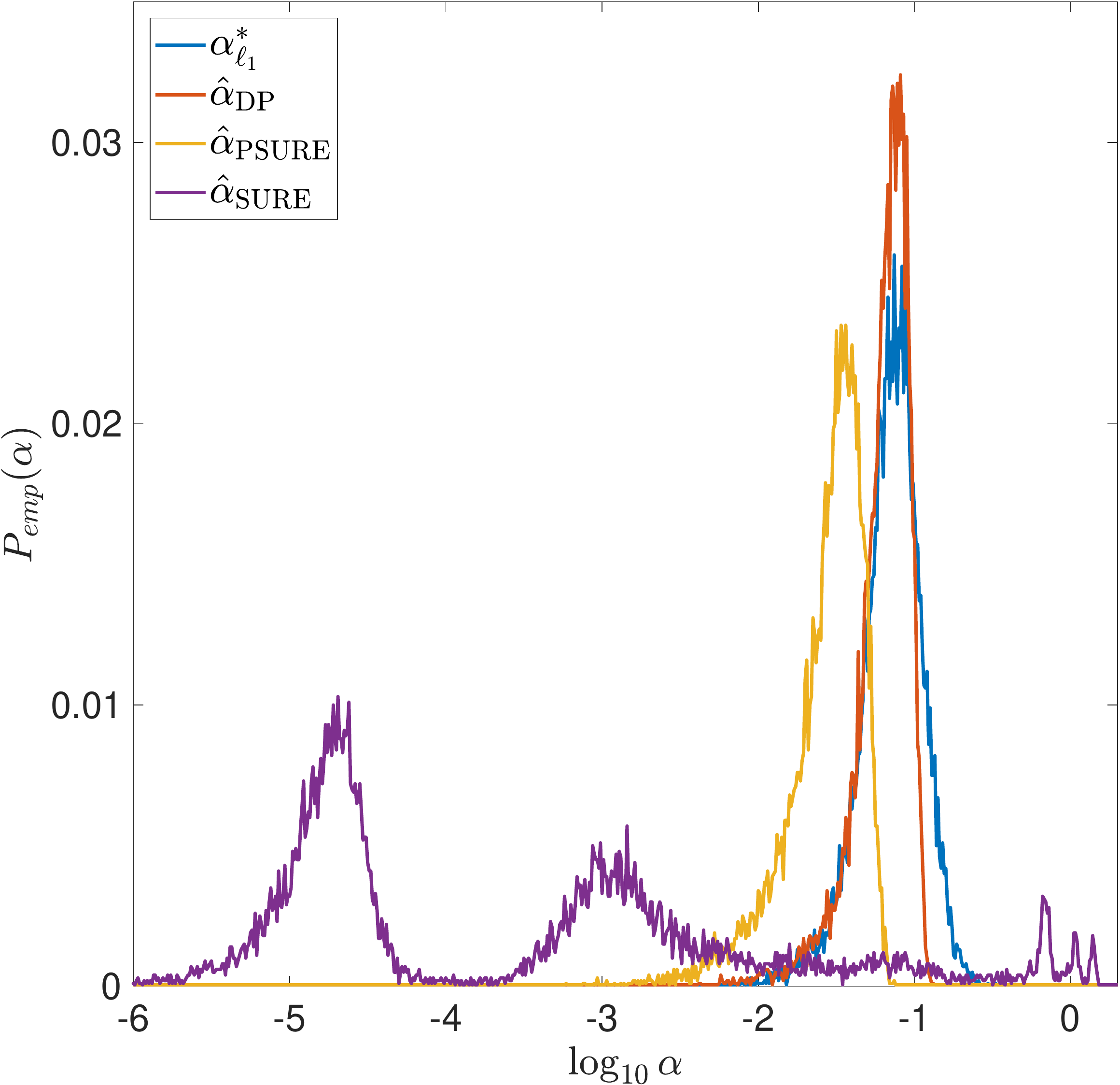}\label{subfig:L1n64AlphaHist}}
\subfigure[][]{\includegraphics[height= 0.48\textwidth]{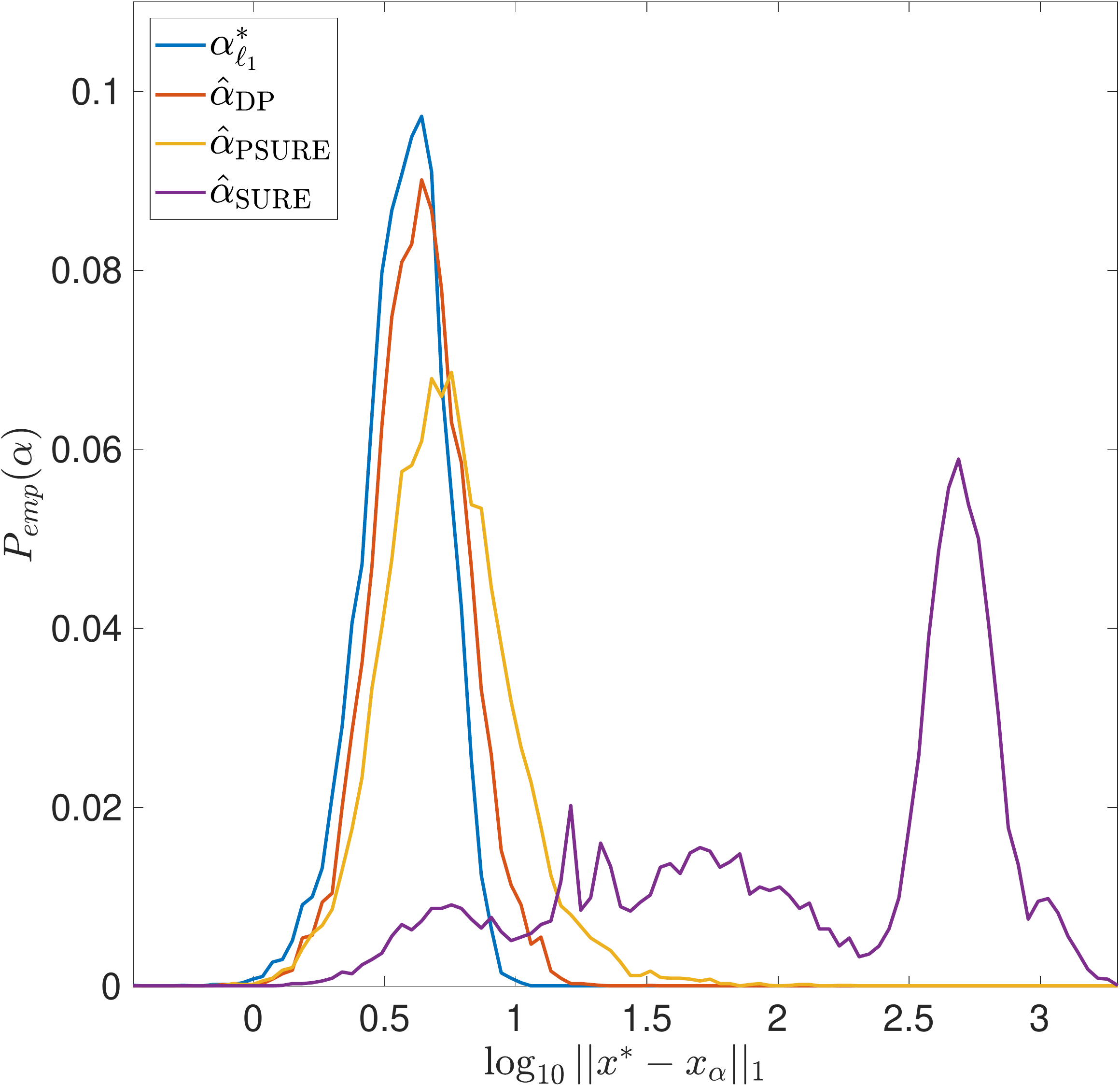}\label{subfig:L1n64ErrHistL1}}
\caption{Empirical probabilities of \subref{subfig:L1n64AlphaHist} $\alpha$ and \subref{subfig:L1n64ErrHistL1} the corresponding $\ell_1$-error for different parameter choice rules using $\ell_1$-regularization, $m=n=64$, $l = 0.06$, $\sigma = 0.1$ and $N = 10^4$ samples of $\varepsilon$. \label{fig:L1Hist_n64l06N6}}
\end{figure}

\paragraph{Sup-Theorems:}

Due to the lack of explicit formulas for the $\ell_1$-regularized solution $x_\alpha(y)$, carrying out similar analysis as in Section \ref{sec:Theory} to derive theorems such as  Theorems \ref{PSURER} and \ref{GSURER} is very challenging. In this work, we only illustrate that similar results may hold for the case of $\ell_1$-regularization by computing the left hand side of \eqref{ExpSupPSURE} and \eqref{ExpSupGSURE} based on our samples. The results are shown in Figure \ref{fig:IlluTheoL1} and are remarkably similar to those shown in Figure \ref{fig:IlluTheoL2}.

\begin{figure}[tb]
   \centering
\subfigure[][PSURE]{\includegraphics[width= 0.48\textwidth]{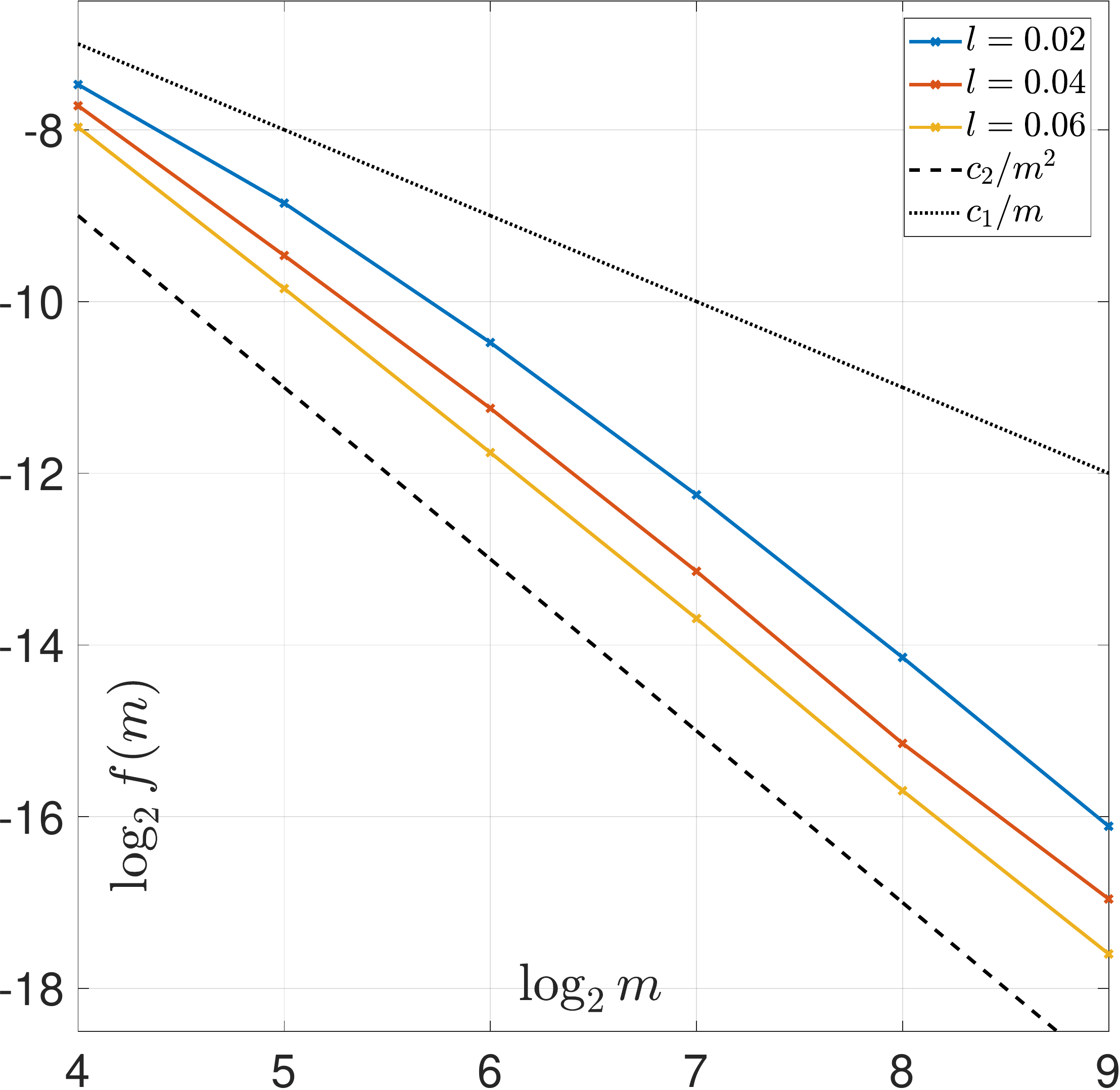}\label{subfig:TheoIlluL1PSURE}}
\subfigure[][SURE]{\includegraphics[width= 0.48\textwidth]{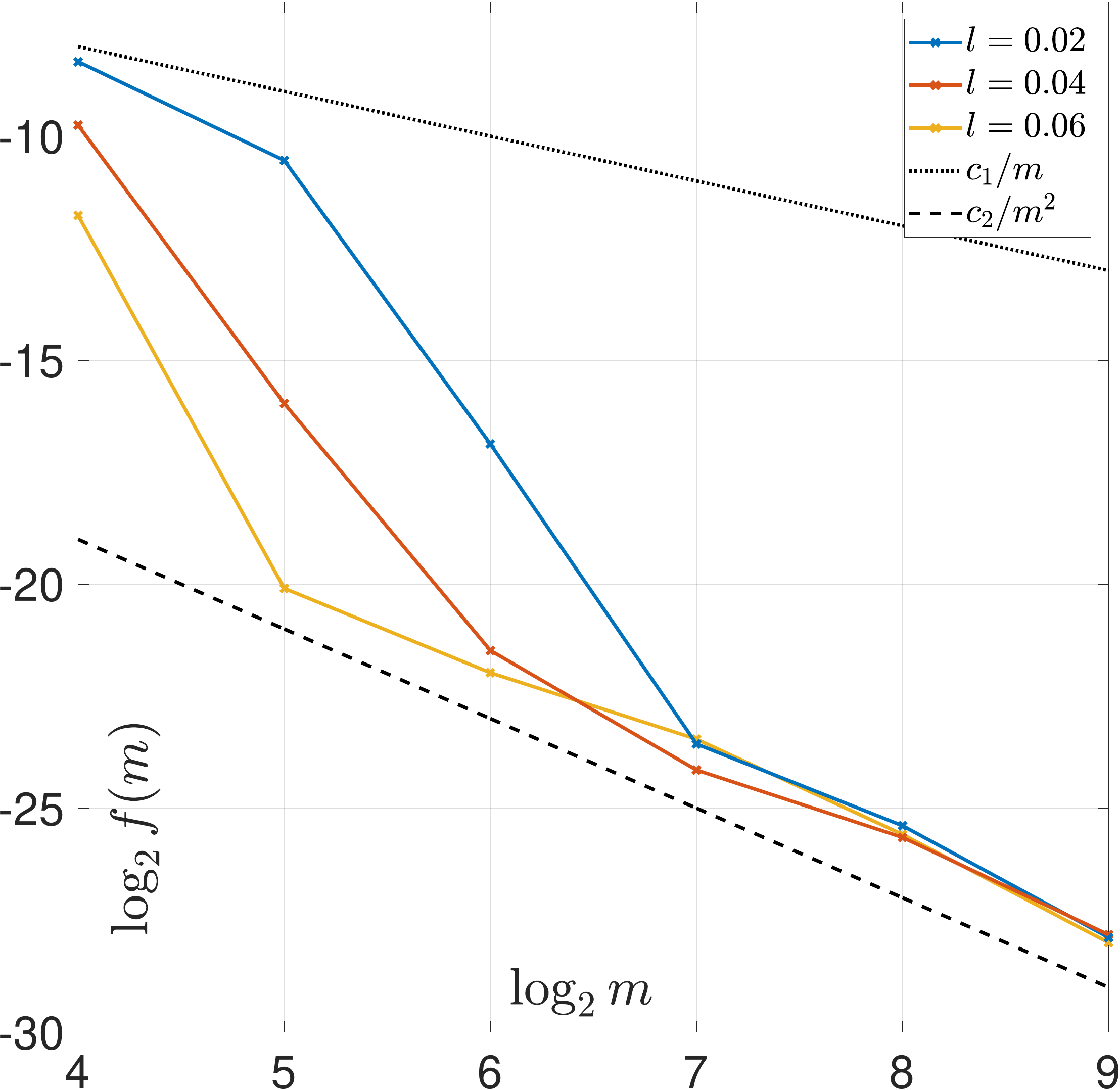}\label{subfig:TheoIlluL1GSURE}}
\caption{Illustration that Theorems \ref{PSURER} and \ref{GSURER} might also hold for $\ell_1$-regularization: The left hand side of \eqref{ExpSupPSURE}/\eqref{ExpSupGSURE} is estimated by the sample mean and plotted vs. $m$. The black dotted lines were added to compare the order of convergence.} \label{fig:IlluTheoL1}
\end{figure}

\paragraph{Linear Grids and Accurate Optimization}

All the issues raised in Section \ref{subsec:LinVsLog} about why the properties of SURE revealed in this work are likely to be overlooked when working on high dimensional problems are even more crucial for the case of $\ell_1$-regularization: For computational reasons, the risk estimators are often evaluated on a coarse, linear $\alpha$-grid using a small, fixed number of iterations of an iterative method such as ADMM. Figure \ref{fig:L1LinVsLog} illustrates that this may obscure important  features of the real SURE function, such as the strong discontinuities for small $\alpha$, or even change it significantly.  

\begin{figure}[tb]
   \centering
\subfigure[][]{\includegraphics[height= 0.47\textwidth]{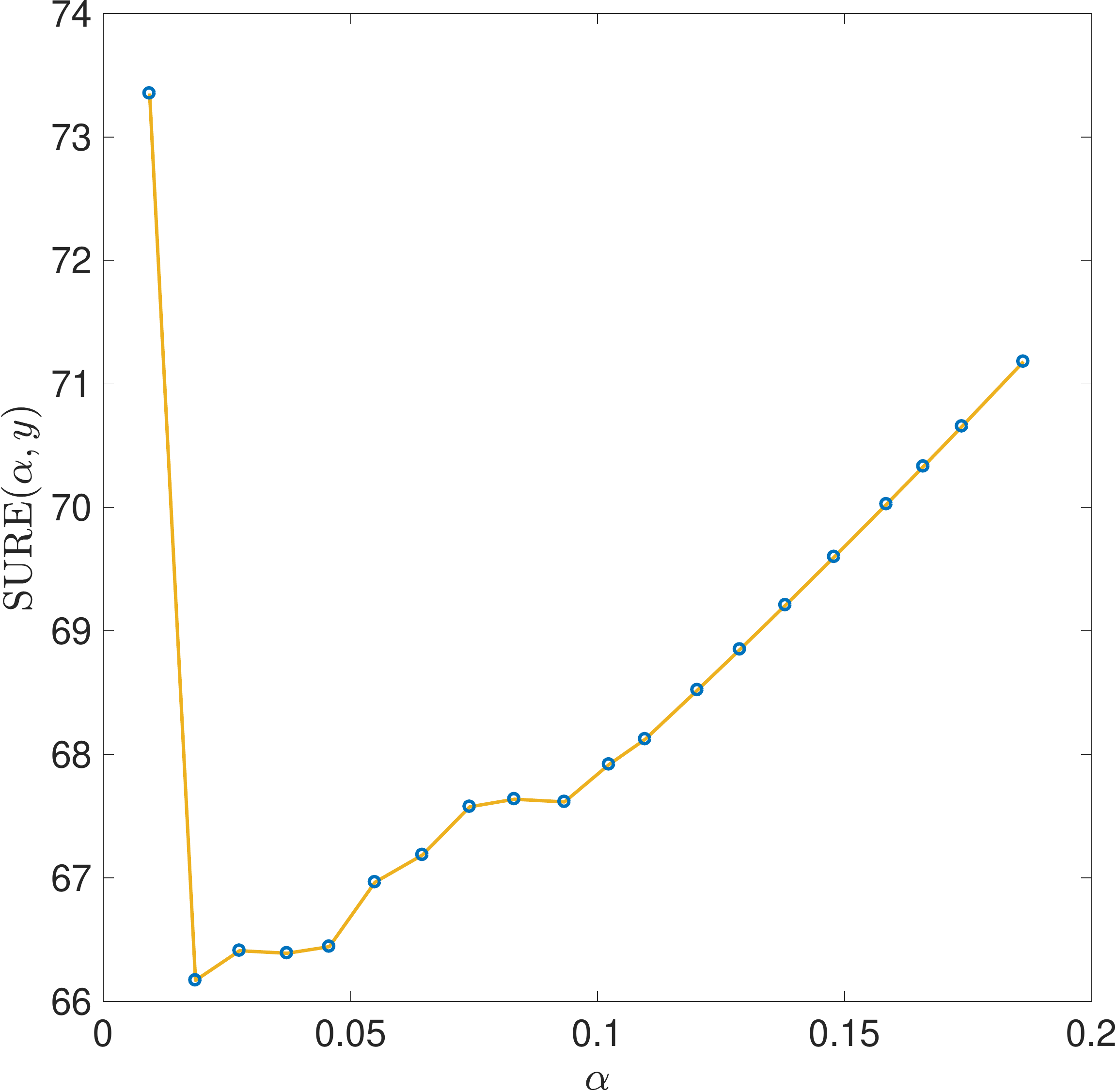}\label{subfig:L1LinVsLog2lin}}
\subfigure[][]{\includegraphics[height= 0.466\textwidth]{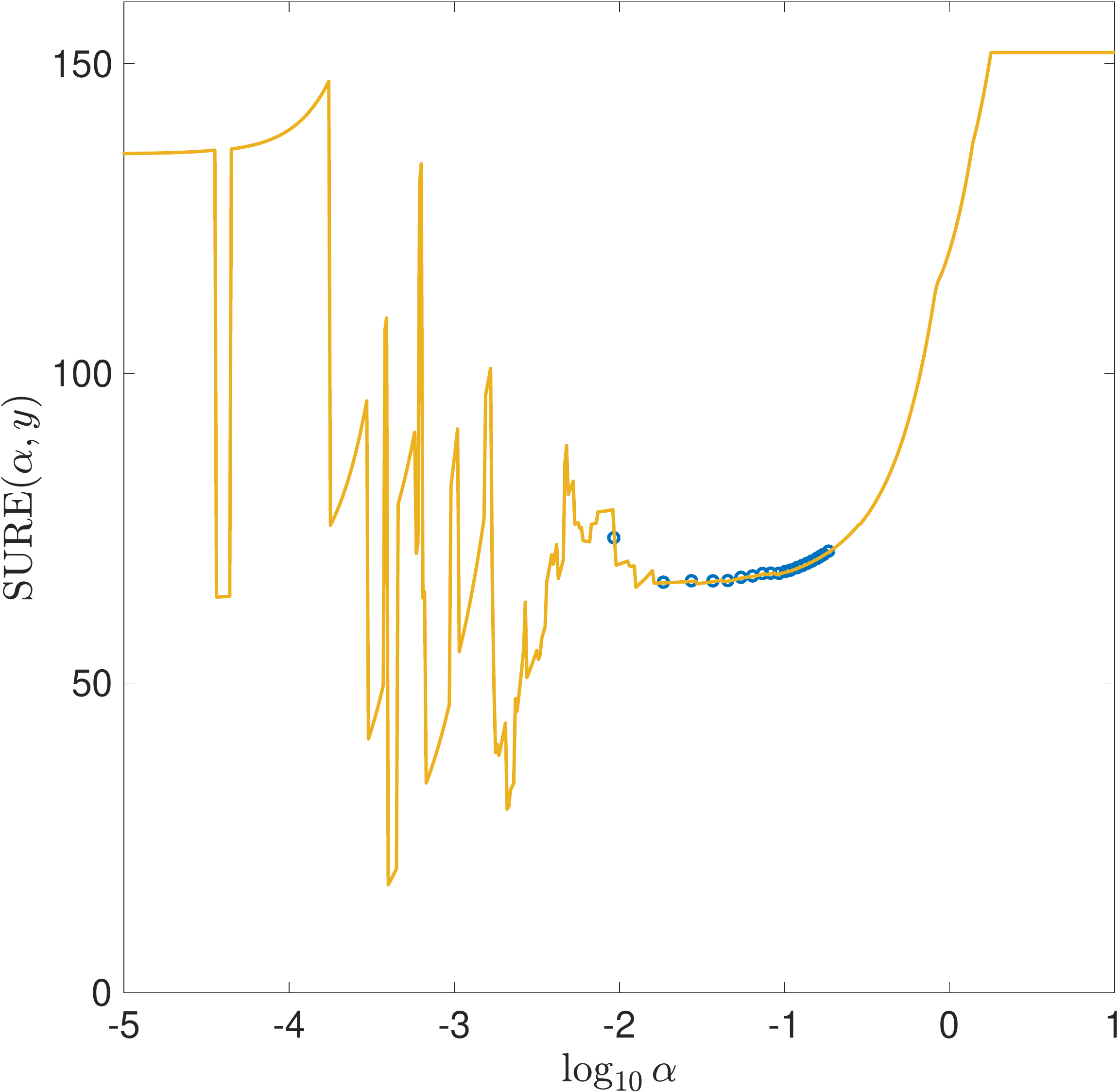}\label{subfig:L1LinVsLog2log}}\\
\subfigure[][]{\includegraphics[height= 0.47\textwidth]{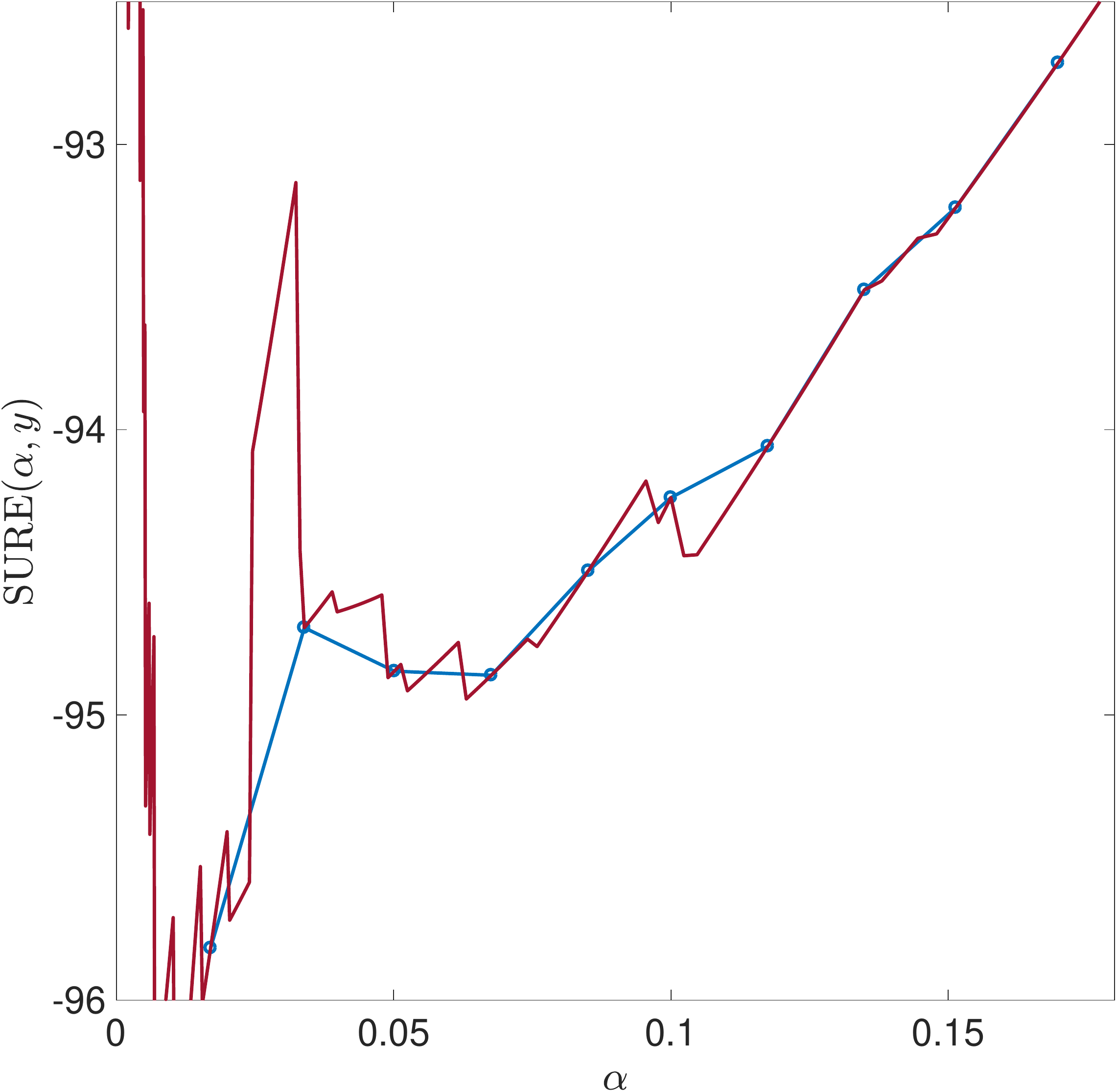}\label{subfig:L1CoarseGrid}}
\subfigure[][]{\includegraphics[height= 0.47\textwidth]{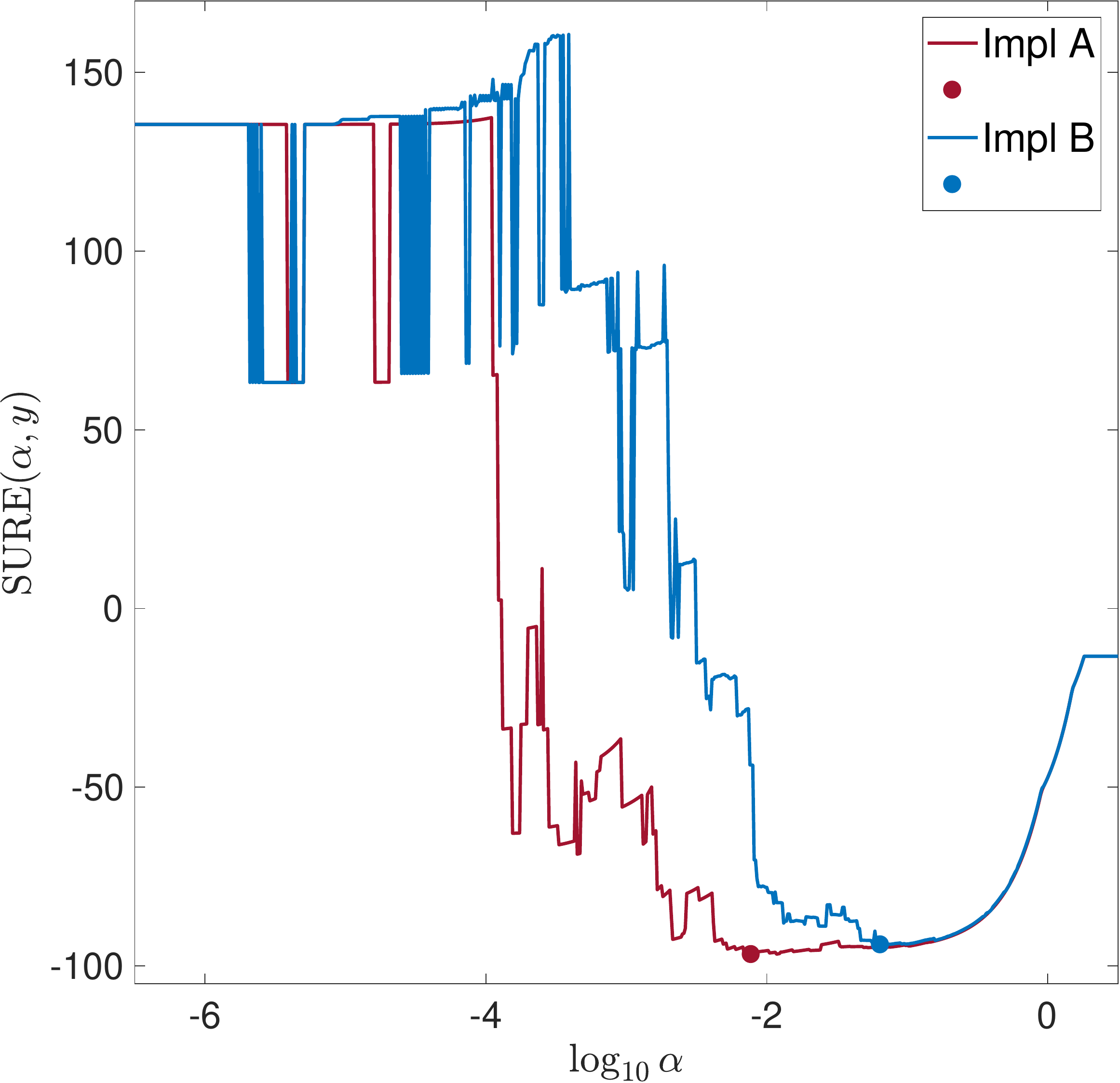}\label{subfig:L1iterOpt}}
\caption{Illustration of the difficulties of evaluating the SURE risk in the case of $\ell_1$-regularization: In \subref{subfig:L1LinVsLog2lin}, a coarse linear grid is constructed around $\alphaHatDP$ as $\alpha = \Delta_\alpha,2 \Delta_\alpha,\ldots,20\Delta_\alpha$ with $\Delta_\alpha =  \alphaHatDP/10$. Similar to Figure \ref{subfig:L2LinVsLog2lin} the plot suggests a clear minimum. However, using a fine, logarithmic grid, \subref{subfig:L1LinVsLog2log} reveals that it is only a sub-optimal local minimum before a very erratic part of $\gsure(\alpha,y)$ starts. \subref{subfig:L1CoarseGrid} shows how a coarse $\alpha$-grid  can lead to an arbitrary projection of $\gsure(\alpha,y)$ that is likely to miss important features. Both risk estimators are the same as those plotted in Figure \ref{subfig:L1GSURErisk} with the same colors. In \subref{subfig:L1iterOpt}, the difference between computing $\gsure(\alpha,y)$ with the consistent and highly accurate version of ADMM (Impl A) and with a standard ADMM version using only 20 iterations (Impl B) is illustrated.\label{fig:L1LinVsLog}}
\end{figure}

\section{Conclusion} \label{sec:Conclusion}

\reply{We examined variational regularization methods for ill-posed inverse problems and conducted extensive numerical studies that assessed the statistical properties different parameter choice rules. In particular, we were interested in the influence of the degree of ill-posedness of the problem (measured in terms of the condition of the forward operator) on the probability distributions of the selected regularization parameters and of the corresponding induced errors. This perspective revealed important features that were not discussed or noticed before but are essential to know for practical applications, namely that unbiased risk estimators encounter enormous difficulties: While the discrepancy principle yields a rather unimodal distribution of regularization parameters resembling the optimal one with slightly increased mean value, the PSURE estimates start to develop multimodality, and the additional modes consist of underestimated regularization parameters, which may lead to significant errors in the reconstruction.} For the case of SURE, which is based on a presumably more reliable risk, the estimates produce quite wide distributions (at least in logarithmic scaling) for increasing ill-posedness, in particular there are many highly underestimated parameters, which clearly yield bad reconstructions. We expect that this behaviour is rather due to the bad quality of the risk estimators than the quality of the risk. These findings may be explained by Theorem \ref{GSURER}, which indicates that the estimated SURE risk might deviate strongly from the true risk function $\msee$ when the condition number of $A$ is large, i.e. the problem is asymptotically ill-posed as $m \rightarrow 0$. Consequently one might expect a strong variation in the minimizers of $\gsure$ with varying $y$ compared to the ones of  $\msee$. A potential way to cure those issues is to develop novel risk estimates for $\msee$ that are not based on Stein's method, possibly it might even be useful not to insist on the unbiasedness of the estimators.

We finally mention that for problems like sparsity-promoting regularization, the SURE risk leads to additional issues, since it is based on a Euclidean norm. While the discrepancy principle and the PSURE risk only use the norms appearing naturally in the output space of the inverse problem (or in a more general setting the log-likelihood of the noise), the Euclidean norm in the space of the unknown is rather arbitrary. In particular, it may deviate strongly from the Banach space geometry in $\ell^1$ or similar spaces in high dimensions. Thus, different constructions of SURE risks are to be considered in such a setting, e.g. based on Bregman distances.

\appendix

\section{Proofs} \label{sec:Proofs}

\begin{proof}[Proof of Theorem \ref{PSUREl}]
We find
		\begin{align*}
		  \mathcal{L}&=\frac{1}{m}\|A\hat x-y+\varepsilon\|_2^2=\frac{1}{m}\|A\hat x-y\|_2^2+\frac{1}{m}\|\varepsilon\|_2^2+\frac{2}{m}\langle\varepsilon,A
			\hat x-y\rangle\\
			&=\frac{1}{m}\sum_{i=1}^{m}\frac{\alpha^2}{(\gamma_i^2+\alpha)^2}y_i^2-\frac{1}{m}\|U^*\varepsilon\|_2^2+\frac{2}{m}\langle\varepsilon,A
			\hat x-Ax^*\rangle\\
			&=\frac{1}{m}\sum_{i=1}^{m}\frac{\alpha^2}{(\gamma_i^2+\alpha)^2}y_i^2-\frac{1}{m}\|\tilde\varepsilon\|_2^2+\frac{2}{m}\langle\varepsilon,A
			\hat x-Ax^*\rangle ~,
		\end{align*}
 where $\tilde \varepsilon=U^*\varepsilon$, that is, $\tilde\varepsilon_i=\langle u_i,\varepsilon\rangle$.
Note that
  \begin{align*}
    A\hat x-A x^*&=U\Sigma\Sigma_{\alpha}^{-1}U^*(Ax^*+\varepsilon)-U\Sigma V^*x^*\\
				&=U\{\Sigma\Sigma_{\alpha}^{-1}-I\}\Sigma V^*x^*+U\Sigma\Sigma_{\alpha}^{-1}U^*\varepsilon~,
		\end{align*}
and recall from \eqref{spectral} that $x_i^*=\langle v_i,x^*\rangle.$ Since $U^*U=UU^*=I$, Var$[\tilde\varepsilon_i]=\sigma^2$. This yields
  \begin{align*}
	  \frac{2}{m}\langle\varepsilon\,,\, A\hat x-A x^*\rangle=\frac{2}{m}\sum_{i=1}^m \frac{\tilde\varepsilon_i^2\gamma_i^2}{\gamma_i^2+\alpha}-
		\frac{2}{m}\sum_{i=1}^m\frac{\alpha\tilde\varepsilon_i\gamma_ix_i^*}{\gamma_i^2+\alpha}.
	\end{align*}
We obtain the representation
  \begin{align*}
	  \frac{1}{m}\psure(\alpha,y)-\mathcal{L}&=-\sigma^2+\frac{2\sigma^2}{m}\sum_{i=1}^m\frac{\gamma_i^2}{\gamma_i^2+\alpha}+\frac{1}{m}\sum_{i=1}^m\tilde\varepsilon_i^2-\frac{2}{m}\sum_{i=1}^m\frac{\tilde\varepsilon_i^2\gamma_i^2}{\gamma_i^2+\alpha}+
		\frac{2}{m}\sum_{i=1}^m\frac{\alpha\tilde\varepsilon_i\gamma_ix_i^*}{\gamma_i^2+\alpha}\\
		&=\frac{1}{m}\sum_{i=1}^m(\tilde\varepsilon_i^2-\sigma^2)-\frac{2}{m}\sum_{i=1}^m\frac{\gamma_i^2}{\gamma_i^2+\alpha}(\tilde\varepsilon_i^2-\sigma^2)+\frac{2}{m}\sum_{i=1}^m\frac{\alpha\gamma_i}{\alpha+\gamma_i^2}x_i^*\tilde\varepsilon_i\\
		&=:Sl_1(\alpha)+Sl_2(\alpha)+Sl_3(\alpha),
	\end{align*}	
	where the terms $Sl_j(\alpha),\,j\in\{1,2,3\}$ are defined in an obvious manner.
Since $\tilde\varepsilon_1^2,\ldots,\tilde\varepsilon_n^2$ are independent and identically distributed with expectation $\sigma^2$ we immediately obtain that
\begin{align*}
  \sqrt{m}Sl_1(\alpha)=O_\mathbb{P}(\sigma^2).
\end{align*}
Note that $Sl_1(\alpha)$ is independent of $\alpha.$ 
\medskip
Next, we consider the term $Sl_2(\alpha).$

Due to \eqref{CondGamma} the  values  $\gamma_i^2/(\gamma_i^2+\alpha ) \in (0,1]$ for $\alpha\in[0,\infty)$, are monotonically decreasing
(with respect to $i$). Thus,  we find
\begin{align*}
	 \sup_{\alpha\in[0,\infty)}|Sl_2(\alpha)|&= \sup_{\alpha\in[0,\infty)}\frac{1}{m}\biggl|\sum_{i=1}^m\frac{\gamma_i^2}{\gamma_i^2+\alpha}(\tilde\varepsilon_i^2-\sigma^2)\biggr|
		\leq\sup_{1\geq c_1\geq\ldots\geq c_m\geq0}\frac{1}{m}\biggl|\sum_{i=1}^m c_i(\tilde\varepsilon_i^2-\sigma^2)\biggr|.
\end{align*}
It follows from  \cite{Li1985}, Lemma 7.2: 		
\begin{align*}
		\sup_{1\geq c_1\geq\ldots\geq c_m\geq0}\frac{1}{m}\biggl|\sum_{i=1}^m c_i(\tilde\varepsilon_i^2-\sigma^2)\biggr|
		=\sup_{1\leq j\leq m}\frac{1}{m}\biggl| \sum_{i=1}^j(\tilde\varepsilon_i^2-\sigma^2)\biggr|,
\end{align*}
and an application of  Kolmogorov's maximal inequality yields
\reply{
\begin{align}\label{eq:Kolmogorov}
\mathbb{P}\bigg(\sup_{\alpha\in[0,\infty)}|Sl_2(\alpha)|>\frac{\sigma^2T}{\sqrt{m}}\bigg)\leq\frac{m}{\sigma^4T^2}\mathrm{Var}\biggl( \frac{1}{m} \sum_{i=1}^m(\tilde\varepsilon_i^2-\sigma^2)\biggr)=\frac{2}{T^2}.
\end{align}
Hence
\begin{align*}
\lim_{T\to\infty}\limsup_{m\to\infty}\mathbb{P}\bigg(\sup_{\alpha\in[0,\infty)}|Sl_2(\alpha)|>\frac{\sigma^2T}{\sqrt{m}}\bigg)=0
\end{align*}
and therefore, by Definition \eqref{def:O},
}
\begin{align*}
	\sup_{\alpha\in[0,\infty)}|Sl_2(\alpha)|=O_\mathbb{P}\bigl(\sigma^2/\sqrt{m}\bigr),
\end{align*}
where	we also used that Var$(\tilde\varepsilon_i^2-\sigma^2)=2\sigma^4,$ which follows from $\tilde\varepsilon_i\sim\mathcal{N}(0,\sigma^2)$.\\
Finally, we estimate $Sl_3(\alpha).$
\reply{
Now, if $\alpha \geq 1$, then it follows from  condition \eqref{CondGamma} that  $0\leq\alpha\gamma_i/(\gamma_i^2+\alpha)\leq \alpha\gamma_i/\alpha=\gamma_i\leq 1$ and
$$
\frac{\alpha\gamma_i}{\gamma_i^2+\alpha} - \frac{\alpha\gamma_{i+1}}{\gamma_{i+1}^2+\alpha} = \frac{\alpha(\gamma_i-\gamma_{i+1})(\alpha -\gamma_i\gamma_{i+1})}{(\gamma_i^2+\alpha)(\gamma_{i+1}^2+\alpha)} ~\geq ~ 0, 
$$
and a  further application of  Kolmogorov's maximal inequality as in \eqref{eq:Kolmogorov}  yields
\begin{align*}
\sup_{\alpha\in[0,\infty)}|Sl_3(\alpha)|& 
\leq \sup_{1\geq c_1\geq\ldots\geq c_m\geq0}\frac{1}{m}\biggl|\sum_{i=1}^m c_ix_i^*\tilde\varepsilon_i\biggr|+\sup_{\alpha\in[0,1]}|Sl_3(\alpha)|\\
&=O_\mathbb{P}\Bigl(\sigma\|x^*\|_2/m\Bigr)+\sup_{\alpha\in[0,1]}|Sl_3(\alpha)|=O_\mathbb{P}\Bigl(\sigma/\sqrt{m}\Bigr)+\sup_{\alpha\in[0,1]}|Sl_3(\alpha)|.
\end{align*}
To determine its asymptotic order, we consider the term
$\bigl(Sl_3(\alpha),\alpha\in[0,1]\bigr)$  as a (Gaussian) stochastic process in $\alpha\in [0,1]$ for fixed $m$. Clearly, by the Cauchy-Schwarz inequality
\begin{align*}
Sl_3(\alpha)^2\leq\frac{1}{m}\sum_{i=1}^m \Bigl(\frac{\alpha\gamma_i}{\gamma_i+\alpha}x_i^{*}\Bigr)^2\cdot\frac{1}{m}\sum_{i=1}^m\tilde{\varepsilon}_i^2\leq \frac{1}{m}\sum_{i=1}^m(x_i^{*})^2\cdot\frac{1}{m}\sum_{i=1}^m\tilde{\varepsilon}_i^2.
\end{align*}
The first factor is bounded since,  by Assumption, $\|x^*\|_2^2=O(m)$ and
for any $m\in\mathbb{N}$, $\frac{1}{m}\sum_{i=1}^m\tilde{\varepsilon}_i^2$ is a random variable (independent of $\alpha$) and therefore almost surely bounded (w.r.t. $\alpha$). Hence, the process $\bigl(Sl_3(\alpha),\alpha\in[0,1]\bigr)$ is almost surely bounded (w.r.t. $\alpha\in[0,1]$).  
Recall that we need to show that $\sup_{\alpha\in[0,1]}|Sl_3(\alpha)|=O_{\mathbb{P}}(1/\sqrt{m})$, where the stochastic order symbol $O_{\mathbb{P}}(1/\sqrt{m})$ is defined in \eqref{def:O}. 
 Let $T>0.$ An application of the Markov inequality yields
\begin{align*}
\mathbb{P}\biggl(\sup_{\alpha\in[0,1]}|Sl_3(\alpha)|>\frac{\sigma T}{\sqrt{m}}\biggr)\leq \frac{2\sqrt{m}}{\sigma T}\mathbb{E}\biggl[\sup_{\alpha\in[0,1]}|Sl_3(\alpha)|\biggr].
\end{align*}
Since $\tilde\varepsilon$ and $-\tilde\varepsilon$ have the same distribution due to symmetry of the standard normal distribution,
\begin{align*}
\mathbb{E}\biggl[\sup_{\alpha\in[0,1]}|Sl_3(\alpha)|\biggr]\leq \mathbb{E}\biggl[\sup_{\alpha\in[0,1]}Sl_3(\alpha)\biggr]+\mathbb{E}\biggl[\sup_{\alpha\in[0,1]}(-Sl_3(\alpha))\biggr]=2\mathbb{E}\biggl[\sup_{\alpha\in[0,1]}Sl_3(\alpha)\biggr].
\end{align*}
Hence, the desired result follows if  we  show that $$
\mathbb{E}\biggl[\sup_{\alpha\in[0,1]}Sl_3(\alpha)\biggr]=O(\sigma/\sqrt{m}).
$$
 To do so, we apply the following Gaussian comparison inequality.
\begin{Theorem} [Sudakov-Fernique inequality (Theorem 2.2.3 in \cite{Adler2007})]\label{Thm:Sudakov}
Let $f$ and $g$ be a.s. bounded Gaussian processes on $T$. If 
\begin{align*}\mathbb{E}[f_t]=\mathbb{E}[g_t]\qquad\text{and}\qquad\mathbb{E}[(f_s-f_t)^2]\leq \mathbb{E}[(g_s-g_t)^2] 
\end{align*}
for all $s,t\in T,$ then
\begin{align*}
\mathbb{E}\bigg[\sup_{t\in T}f_t\bigg]\leq \mathbb{E}\bigg[\sup_{t\in T}g_t\bigg].
\end{align*}
\end{Theorem}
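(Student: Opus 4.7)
The Sudakov--Fernique inequality is a classical Gaussian comparison result, and the plan is to use the standard interpolation argument (Slepian--Fernique--Kahane) combined with a smooth approximation of the maximum. I would first reduce to the finite case: by separability of the index set and the almost-sure boundedness assumption, it suffices to prove the inequality with $T$ finite, the general case following by exhausting $T$ with increasing finite subsets $T_n\uparrow T$ and invoking monotone convergence for the suprema of integrable random variables. I would then centre both processes by subtracting their common mean $\mathbb{E}[f_t]=\mathbb{E}[g_t]$, which shifts both suprema by the same constant and preserves the incremental-variance hypothesis, and finally realise $f$ and $g$ on a common product space as independent copies, which preserves the law of each process and hence both sides of the inequality.

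With $T$ finite, $f$ and $g$ centred and independent, I would replace the maximum by the smooth log-sum-exp functional $M_\beta(\mathbf{x}):=\beta^{-1}\log\sum_{t\in T} e^{\beta x_t}$, which satisfies $\max_t x_t\leq M_\beta(\mathbf{x})\leq \max_t x_t+\beta^{-1}\log|T|$ and has the explicit Hessian $\partial_u\partial_v M_\beta(\mathbf{x})=\beta(p_u\delta_{uv}-p_up_v)$ with probability weights $p_u(\mathbf{x})=e^{\beta x_u}/\sum_v e^{\beta x_v}$. The linear interpolation $h_t(s):=\sqrt{1-s}\,f_t+\sqrt{s}\,g_t$ for $s\in[0,1]$ joins $h(0)=f$ to $h(1)=g$, and the task is to show that $\phi(s):=\mathbb{E}[M_\beta(h(s))]$ is non-decreasing in $s$.

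The core computation is to differentiate $\phi$ under the expectation and apply Gaussian integration by parts (Stein's identity) to each term $\mathbb{E}[h_t'(s)\,\partial_t M_\beta(h(s))]$. Because $f$ and $g$ are independent and centred, one obtains $\mathrm{Cov}(h_t'(s),h_u(s))=\tfrac{1}{2}\bigl(\mathbb{E}[g_tg_u]-\mathbb{E}[f_tf_u]\bigr)$. Expanding this covariance difference through the polarisation identity as $\tfrac12(b_t+b_u)+\tfrac12 e_{tu}$, with $b_t:=\mathbb{E}[g_t^2]-\mathbb{E}[f_t^2]$ and $e_{tu}:=\mathbb{E}[(f_t-f_u)^2]-\mathbb{E}[(g_t-g_u)^2]\leq 0$, the $b_t$-contributions to $\phi'(s)$ cancel thanks to $\sum_u p_u=1$, and the remainder collapses to $-\tfrac{\beta}{4}\sum_{t,u}\mathbb{E}[p_tp_u]\,e_{tu}$, which is non-negative since $p_tp_u\geq 0$ and $e_{tu}\leq 0$. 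Hence $\phi'(s)\geq 0$, yielding $\mathbb{E}[M_\beta(f)]\leq\mathbb{E}[M_\beta(g)]$; letting $\beta\to\infty$ via the uniform log-sum-exp bound completes the finite-$T$ case, and the first-paragraph reduction then gives the full statement.

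The main technical obstacle is the bookkeeping around Gaussian integration by parts and the exchange of limits: one must verify integrability of the first partials of $M_\beta$ (routine, since these equal the $p_u$ and are bounded by $1$, while $h(s)$ has a Gaussian density) and, when returning from finite to infinite $T$, must interchange $\beta\to\infty$ and $T_n\uparrow T$ carefully. The cleanest route is to complete the $\beta\to\infty$ step for each finite $T_n$ first, deducing $\mathbb{E}[\sup_{t\in T_n} f_t]\leq \mathbb{E}[\sup_{t\in T_n} g_t]$, and only then let $n\to\infty$ by monotone convergence under the almost-sure boundedness hypothesis that makes the suprema integrable.
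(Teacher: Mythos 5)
This statement is not proved in the paper at all: it is quoted verbatim as Theorem 2.2.3 of Adler and Taylor and used as a black box inside the proof of Theorem \ref{PSUREl}, so there is no internal argument to compare yours against. What you give is the standard interpolation proof of Sudakov--Fernique --- smooth log-sum-exp maximum plus Gaussian integration by parts along $h_t(s)=\sqrt{1-s}\,f_t+\sqrt{s}\,g_t$ --- i.e.\ essentially the argument of the cited source, and the core computation checks out: the Hessian identity $\partial_u\partial_v M_\beta=\beta(p_u\delta_{uv}-p_up_v)$, the covariance $\mathrm{Cov}(h_t'(s),h_u(s))=\tfrac12\bigl(\mathbb{E}[g_tg_u]-\mathbb{E}[f_tf_u]\bigr)$, the cancellation of the $b_t$-contributions against $\sum_u p_u=1$, the sign of the remaining term $-\tfrac{\beta}{4}\sum_{t,u}\mathbb{E}[p_tp_u]e_{tu}\geq 0$, and the $\beta\to\infty$ / finite-exhaustion limits are all correct.

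The one step that does not work as written is the centering reduction. The hypothesis allows a common mean $\mu(t)=\mathbb{E}[f_t]=\mathbb{E}[g_t]$ that varies with $t$; subtracting it does \emph{not} shift the suprema by a constant, and $\mathbb{E}\sup_t(f_t-\mu(t))\leq\mathbb{E}\sup_t(g_t-\mu(t))$ does not by itself give $\mathbb{E}\sup_t f_t\leq\mathbb{E}\sup_t g_t$. The standard repair is to keep the drift inside the interpolation: set $h_t(s)=\mu(t)+\sqrt{1-s}\,(f_t-\mu(t))+\sqrt{s}\,(g_t-\mu(t))$; then $h_t'(s)$ is centered, Stein's identity applies exactly as you wrote it, and the covariance difference --- hence the rest of your computation --- is unchanged, because with equal means the increment condition $\mathbb{E}[(f_s-f_t)^2]\leq\mathbb{E}[(g_s-g_t)^2]$ is equivalent to the same condition for the centered parts. (In the paper's application the slip is immaterial, since both $Sl_3$ and $\widetilde{Sl}_3$ are centered.) Two smaller housekeeping points: differentiate $\phi$ only on $(0,1)$, where $h_t'(s)$ exists, and conclude by continuity at the endpoints; and the finite-subset exhaustion uses separability together with integrability of the suprema, which a.s.\ boundedness supplies (e.g.\ via Borell--TIS, or by applying monotone convergence to $\sup_{T_n}(f_t-f_{t_0})\geq 0$).
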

Let $\alpha_1,\alpha_2\in[0,1].$ 
\begin{align*}
&\mathbb{E}\bigl[\bigl(Sl_3(\alpha_1)-Sl_3(\alpha_2)\bigr)^2\bigr]=\frac{4\sigma^2}{m^2}\sum_{i=1}^m\biggl(\frac{\alpha_1^2\gamma_i^2}{(\gamma_i^2+\alpha_1)^2}-\frac{\alpha_2^2\gamma_i^2}{(\gamma_i^2+\alpha_2)^2}\biggr)^2(x_i^*)^2\\
&~=4(\alpha_1-\alpha_2)^2\frac{\sigma^2}{m^2}\sum_{i=1}^m\frac{\gamma_i^6}{(\gamma_i^2+\alpha_1)^2(\gamma_i^2+\alpha_2)^2}(x_i^*)^2\leq (\sqrt{\alpha_1}-\sqrt{\alpha_2})^2\frac{16\sigma^2}{m^2}\sum_{i=1}^m (x_i^*)^2.
\end{align*}
Consider the process 
\begin{align*}
\widetilde{Sl}_3:=\biggl(\widetilde{Sl}_3(\alpha)=\frac{4\sqrt{\alpha}}{m}\sum_{i=1}^m x_i^{*}\tilde{\varepsilon_i},\,\alpha\in[0,1]\biggr).
\end{align*}
Obviously, $\widetilde{Sl}_3$ is almost surely bounded and 
\begin{align*}
&\mathbb{E}\bigl[\bigl(\widetilde{Sl}_3(\alpha_1)-\widetilde{Sl}_3(\alpha_2)\bigr)^2\bigr]= (\sqrt{\alpha_1}-\sqrt{\alpha_2})^2\frac{16\sigma^2}{m^2}\sum_{i=1}^m(x_i^*)^2,
\end{align*}
which yields
\begin{align*}
\mathbb{E}\bigl[\bigl(Sl_3(\alpha_1)-Sl_3(\alpha_2)\bigr)^2\bigr]\leq\mathbb{E}\bigl[\bigl(\widetilde{Sl}_3(\alpha_1)-\widetilde{Sl}_3(\alpha_2)\bigr)^2\bigr]\quad\text{for all }\alpha_1,\alpha_2\in[0,1].
\end{align*}
Since $\mathbb{E}[Sl_3(\alpha)]=\mathbb{E}[\widetilde Sl_3(\alpha)]=0$ for all $\alpha\in[0,1],$ the assumptions of Theorem \ref{Thm:Sudakov} are satisfied, which allows us to conclude
\begin{align*}
\mathbb{E}\bigg[\sup_{\alpha\in[0,1]}Sl_3(\alpha)\bigg]\leq\mathbb{E}\bigg[\sup_{\alpha\in[0,1]}\widetilde{Sl}_3(\alpha)\bigg].
\end{align*}
Furthermore,
\begin{align*}
\mathbb{E}\bigg[\sup_{\alpha\in[0,1]}\widetilde{Sl}_3(\alpha)\bigg]\leq\mathbb{E}\bigg[\sup_{\alpha\in[0,1]}|\widetilde{Sl}_3(\alpha)|\bigg]\leq\mathbb{E}\bigg[\frac{4}{m}\bigg|\sum_{i=1}^m x_i^*\tilde{\varepsilon}_i\bigg|\bigg]=\sqrt{\frac{2}{\pi}}\sqrt{\frac{16\sigma^2}{m^2}\sum_{i=1}^m (x_i^*)^2},
\end{align*}
where we used that $\frac{4}{m}\sum_{i=1}^m x_i^*\tilde{\varepsilon}_j\sim\mathcal{N}\big(0,16\sigma^2/m^2\sum_{i=1}^m(x_i^*)^2\big)$ and that for a random variable $Z\sim\mathcal{N}(0,s^2)$ the first absolute moment is given by $\mathbb{E}|Z|=s\sqrt{2/\pi}.$ This yields
\begin{align*}
\lim_{T\to \infty}\limsup_{m\to\infty}\mathbb{P}\biggl(\sup_{\alpha\in[0,1]}|Sl_3(\alpha)|>\frac{\sigma T}{\sqrt{m}}\biggr)\leq\lim_{T\to \infty}\limsup_{m\to\infty}4\sigma\frac{\sqrt{2}\|x^*\|_2}{\sqrt{m\pi}T}=0,
\end{align*}
since, by Assumption, $\|x^*\|_2^2=O(m)$. By Definition \eqref{def:O} we  conclude $\sup_{\alpha\in[0,1]}|Sl_3(\alpha)|=O(\sigma/\sqrt{m}).$
}
\end{proof}

\begin{proof}[Proof of Corollary \ref{CorPSUREl}]
By definition $\psure(\alphaHatPSURE,y)\leq \psure(\alpha_m,y)$. This yields
\begin{align*}
  \mathbb{P}(\mathcal{L}(\alphaHatPSURE)\geq \mathcal{L}(\alpha_m)+\delta_m)&\leq
  \mathbb{P}\Bigl(\mathcal{L}(\alphaHatPSURE)-\frac{1}{m}\psure(\alphaHatPSURE,y)\geq \mathcal{L}(\alpha_m)-\frac{1}{m}\psure(\alpha_m,y)+\delta_m\Bigr)\\
  &\leq \mathbb{P}\Bigl(2\sup_{\alpha\in[0,\infty)}|\mathcal{L}(\alpha)-\frac{1}{m}\psure(\alpha,y)|\geq \delta_m\Bigr).
\end{align*}
It follows from Theorem \ref{PSUREl} that $\sup_{\alpha\in[0,\infty)}|\mathcal{L}(\alpha)-\frac{1}{m}\psure(\alpha,y)|=o_{\mathbb{P}}(\delta_m)$ for any sequence $(\delta_m)_{m\in\mathbb{N}}$ such that
$1/\delta_m=o(\sqrt{m}).$ By definition (see \eqref{def:o}),
\begin{align*}
\mathbb{P}\Bigl(\sup_{\alpha\in[0,\infty)}|\mathcal{L}(\alpha)-\frac{1}{m}\psure(\alpha,y)|\geq \delta_m\,\nu\Bigr)\to0\quad\text{for all}\quad \nu>0.
\end{align*}
Setting  $\nu=1/2$ above,
 the claim now follows.

\end{proof}

\begin{proof}[Proof of Theorem \ref{PSURER}]
Observing \eqref{PSUREspectral} and \eqref{e2} we find 
  \begin{align*}
	  \frac{1}{m}\bigl(\psure(\alpha,y)-\mspe(\alpha)\bigr)=\frac{1}{m}\sum_{i=1}^m\frac{\alpha^2}{(\gamma_i^2+\alpha)^2}\check{\varepsilon_i},
	\end{align*}	
where $\check{\varepsilon}_i:=y_i^2-\mathbb{E}[y_i^2]$. The random variables $\check{\varepsilon}_1,\ldots,\check{\varepsilon}_n$	are independent and centered. Notice that
 \begin{align*}
	 {\rm Var}[\check{\varepsilon}_i]&={\rm Var}[y_i^2]=\mathbb{E}[y_i^4]-(\mathbb{E}[y_i^2])^2=4\gamma_i^2{x_i^*}^2\sigma^2+2\sigma^4,
	\end{align*}
since $y_i\sim\mathcal{N}(\gamma_i{x_i^*},\sigma^2)$. 
\reply{
 Consider the monotonically increasing function $\alpha\mapsto\frac{\alpha^2}{(\gamma_i^2+\alpha)^2} \in [0,1]$  (where $\alpha\in[0,\infty)$)
 and note that the sequence $\big(\frac{1}{(\gamma_i^2+\alpha)^2}\big)_{i=1}^m$ is increasing. With the same arguments as in the proof of Theorem \ref{PSUREl} (see \eqref{eq:Kolmogorov}), using Kolmogorov's maximal inequality, we estimate
  \begin{align*}
	   &\sup_{\alpha\in[0,\infty)}\Bigl|\psure(\alpha,y)-\mspe(\alpha)\Bigr|=\sup_{\alpha\in[0,\infty)}
		  \biggl|\sum_{i=1}^m\frac{\alpha^2}{(\gamma_i^2+\alpha)^2}\check{\varepsilon}_i\biggr|
			=\sup_{1\leq j\leq m}\biggl|\sum_{i=j}^m\check{\varepsilon}_i\biggr|\\
            &~=O_\mathbb{P}\biggl(\biggl(\sum_{i=1}^m(4\gamma_i^2(x_i^*)^2+2\sigma^4)\biggr)^{\frac{1}{2}}\biggr).
	\end{align*}
}	
It remains to show the $L^2$-convergence \eqref{ExpSupPSURE}. To this end define the $j$-th partial sum
$$
  S_j:=\sum_{i=1}^j\check{\varepsilon}_i
$$
and observe that $\{S_j\,|\,j\in\mathbb{N}\}$ forms a martingale. The $L^p$-maximal  inequality  for martingales yields
  \begin{align*}
	 &\mathbb{E}\biggl(\sup_{\alpha\in[0,\infty)}\Bigl|\frac{1}{m}\bigl(\psure(\alpha,y)-\mspe(\alpha)\bigr)\Bigr|\biggr)^2
	  \leq  \mathbb{E}\biggl(\sup_{\alpha\in[0,\infty)}\Bigl|\frac{1}{m}\bigl(\psure(\alpha,y)-\mspe(\alpha)\bigr)\Bigr|^2\biggr)\\
		&  \leq \frac{1}{m^2}\mathbb{E}\Bigl(\sup_{1\leq j\leq m}|S_j|^2\Bigr)\leq\frac{4}{m^2}\mathbb{E}\biggl(\sum_{i=1}^m\check{\varepsilon}_i\biggr)^2=O\biggl(\frac{1}{m^2}\sum_{i=1}^m(4\gamma_i^2(x_i^*)^2+2\sigma^4)\biggr)
	\end{align*}
as above.	

\end{proof}

\begin{proof}[Proof of Lemma \ref{alphasurelemma}]

It is straightforward to see the differentiability of $\mspe$ and to compute
\begin{equation*}
{\mspe}'(\alpha) = \sum_{i=1}^m \frac{2 \gamma_i^4}{(\gamma_i^2 +\alpha)^3} (\alpha (x_i^*)^2 - \sigma^2). 
\end{equation*} 
Hence, for $\alpha < \frac{\sigma^2}{\max_i \vert x_i^*  \vert^2}$, the risk $\mspe$ is strictly decreasing, which implies the first inequality. Moreover, for $\alpha \geq 1$ we obtain
\begin{eqnarray*}
\alpha^3 {\mspe}'(\alpha) &=&  2   \sum_{i=1}^m \frac{\gamma_i^4}{(\gamma_i^2/\alpha + 1)^3} (\alpha (x_i^*)^2 - \sigma^2) \\ &>& 
 \frac{\alpha}{4}  \sum_{i=1}^m {\gamma_i^4}  (x_i^*)^2 - 2 \sigma^2
\sum_{i=1}^m \gamma_i^4
\end{eqnarray*}
and we finally see that $\mspe'$ is nonnegative if in addition
$\alpha \geq 8 \sigma^2 \frac{\sum \gamma_i^4}{\sum \gamma_i^4 (x_i^*)^2} $.

\end{proof}

\begin{proof}[Proof of Theorem \ref{Thm:alphaconv}]
From the uniform convergence of the sequence $f_{m_k}$ in Proposition \ref{fmproposition} we obtain the convergence of the minimizers $\hat \alpha_{\mspe,m_k}$. Combined with Theorem \ref{PSURER} we obtain an analogous argument for $\hat \alpha_{\psure,m_k}$.
\end{proof}

\begin{proof}[Proof of Theorem \ref{GSUREl}]\reply{
For $m=n$ and invertible matrices $A$  the projection $\Pi$ satisfies $\Pi={\rm id}$ and 
\begin{align*}
  \| x^*-\hat x_{\alpha}\|_2^2&=\|x^*\|_2^2-2\langle x^*,\hat x_{\alpha}\rangle+\|\hat x_{\alpha}\|_2^2=\sum_{i=1}^m(x_i^*)^2-2\langle x^*,\hat x_{\alpha}\rangle+\sum_{i=1}^m \frac{\gamma_i^2}{(\gamma_i^2 + \alpha)^2} y_i^2,
\end{align*} 
where we used \eqref{eq:xhat}. Recall from \eqref{modelSpectral} that $y_i=\gamma_ix_i^*+\tilde\varepsilon_i.$ This yields
\begin{align*}
\langle x^*,\hat x_{\alpha}\rangle&=\langle V\Sigma^{-1}U^*(y-\varepsilon),V\Sigma_{\alpha}^+U^*y\rangle=\langle \Sigma^{-1}U^*(y-\varepsilon),\Sigma_{\alpha}^+U^*y\rangle\\
&=\langle \Sigma^{-1}(y_i-\tilde\varepsilon_i)_{i=1}^m,\Sigma_{\alpha}^+(y_i)_{i=1}^m\rangle=\langle \Sigma^{-1}(\gamma_ix_{i}^*)_{i=1}^m,\Sigma_{\alpha}^+(y_i)_{i=1}^m\rangle\\
&=\langle (x_{i}^*)_{i=1}^m,\Sigma_{\alpha}^+(y_i)_{i=1}^m\rangle=\sum_{i=1}^m\frac{\gamma_ix_i^*}{\gamma_i^2+\alpha}y_i.
\end{align*}
Hence,
\begin{align*}
\| x^*-\hat x_{\alpha}\|_2^2=\sum_{i=1}^m(x_i^*)^2-2\sum_{i=1}^m\frac{x_i^*\gamma_i}{\gamma_i^2+\alpha}y_i+\sum_{i=1}^m \frac{\gamma_i^2}{(\gamma_i^2 + \alpha)^2} y_i^2.
\end{align*}
Recall from \eqref{GSUREspectral} that 
\begin{align} \label{GSURErep}
\gsure(\alpha,y) &= \sum_{i=1}^m \left( \frac{1}{\gamma_i} - \frac{\gamma_i}{\gamma_i^2 + \alpha} \right)^2 y_i^2 - \sigma^2 \sum_{i=1}^m \frac{1}{\gamma_i^2}  + 2 \sigma^2 \sum_{i=1}^m \frac{1}{\gamma_i^2 + \alpha}.
\end{align}
We obtain
\begin{align*}
\gsure(\alpha,y)-\| x^*-\hat x_{\alpha}\|_2^2&=
\sum_{i=1}^m\Bigl(\frac{1}{\gamma_i^2}-\frac{2}{\gamma_i^2+\alpha}\Bigr)(y_i^2-\sigma^2)-\|x^*\|_2^2+2\sum_{i=1}^m
\frac{\gamma_i^2(x_i^*)^2}{\gamma_i^2+\alpha}+2\sum_{i=1}^m\frac{\gamma_ix_i^*}{\gamma_i^2+\alpha}\tilde\varepsilon_i\\
&=\sum_{i=1}^m\Bigl(\frac{1}{\gamma_i^2}-\frac{2}{\gamma_i^2+\alpha}\Bigr)(y_i^2-\mathbb{E}[y_i^2])+2\sum_{i=1}^m\frac{\gamma_ix_i^*}{\gamma_i^2+\alpha}\tilde\varepsilon_i
\\
	&=2\alpha\sum_{i=1}^m\frac{x_i^*}{\gamma_i(\gamma_i^2+\alpha)}\tilde{\varepsilon}_i+\sum_{i=1}^m\frac{\alpha^2-\gamma_i^4}{\gamma_i^2(\gamma_1^2+\alpha)^2}(\tilde\varepsilon_i^2-\sigma^2)\\
	&=:GSl_1(\alpha)+GSl_2(\alpha),
	\end{align*}
	where $GSl_1(m,\alpha)$ and $GSl_2(m,\alpha)$ are defined in an obvious manner. Obviously,
	\begin{align*}
	\frac{\alpha}{\gamma_i^2+\alpha}\leq\frac{\alpha}{\gamma_{i+1}^2+\alpha}\quad\text{and}\quad0\leq\frac{\alpha}{\gamma_i^2+\alpha}\leq1
	\end{align*}
	by assumption \eqref{CondGamma}. Therefore
  \begin{align*}
    \sup_{\alpha\in[0,\infty)}|GSl_1(\alpha)|&=\sup_{\alpha\in[0,\infty)}\Biggl|2\alpha\sum_{i=1}^m\frac{x_i^*}{\gamma_i(\gamma_i^2+\alpha)}\tilde{\varepsilon}_i\Biggr|
		\leq\sup_{0\leq c_1\leq\ldots\leq c_m\leq1}\Biggl|2\sum_{i=1}^m c_i\frac{x_i^*}{\gamma_i}\tilde{\varepsilon}_i\Biggr|\\
		&=\sup_{1\leq j\leq m}\Biggl| 2\sum_{i=j}^m\frac{x_i^*}{\gamma_i}\tilde{\varepsilon}_i\Biggr|=O_\mathbb{P}\biggl(\sqrt{\sum_{i=1}^m\frac{(x_i^{\ast})^2}{\gamma_i^2}}\biggr),
	\end{align*}
where the last estimate follows from Kolmogorov's maximal inequality as in \eqref{eq:Kolmogorov} .	Now, since
\begin{align*}
c_m\sqrt{\sum_{i=1}^m\frac{(x_i^{\ast})^2}{\gamma_i^2}}\leq c_m\sqrt{\max_{1\leq i\leq m}|x_i^{\ast}|\sum_{i=1}^m\frac{1}{\gamma_i^2}}=O(\sqrt{c_m}).
\end{align*}
Next  we derive a corresponding  estimate for  the term $GSl_2(\alpha)$. Observe that
$0\leq\alpha/(\gamma_{i}^2+\alpha)\leq \alpha/(\gamma_{i+1}^2+\alpha)\leq1$ and $1\geq\gamma_{i}^4/(\gamma_{i}^2+\alpha)^2\geq\gamma_{i+1}^4/(\gamma_{i+1}^2+\alpha)^2\geq0$ for any $\alpha\geq0$ and any $1\leq i\leq m$
by ordering of the singular values.
This implies
  \begin{align*}
    \sup_{\alpha\in[0,\infty)}|GSl_2(\alpha)|&=\sup_{\alpha\in[0,\infty)}\biggl|\sum_{i=1}^m\frac{\alpha^2-\gamma_i^4}{\gamma_i^2(\gamma_i^2+\alpha)^2}(\tilde\varepsilon_i^2-\sigma^2)\biggr|\\
		&\leq \sup_{1\geq c_1\geq\ldots\geq c_m\geq0}\biggl|\sum_{i=1}^m\frac{c_i}{\gamma_i^2}(\tilde\varepsilon_i^2-\sigma^2)\biggr|+\sup_{0\leq c_1\leq\ldots\leq c_m\leq1}\biggl|\sum_{i=1}^m\frac{c_i}{\gamma_i^2}(\tilde\varepsilon_i^2-\sigma^2)\biggr|\\
		&\leq\sup_{1\leq j\leq m}\Biggl| \sum_{i=1}^j\frac{1}{\gamma_i^2}(\tilde\varepsilon_i^2-\sigma^2)\Biggr|+\sup_{1\leq j\leq m}\Biggl| \sum_{i=j}^m\frac{1}{\gamma_i^2}(\tilde\varepsilon_i^2-\sigma^2)\Biggr|=O_{\mathbb{P}}\left(\sqrt{\sum_{i=1}^m\frac{1}{\gamma_i^4}}\right),
	\end{align*}		
	by a further application of Kolmogorov's maximal inequality as in \eqref{eq:Kolmogorov}.
Notice that	$\sqrt{c_m}\leq1/\sqrt{m}$ and $1/\sqrt{m}\leq d_m\leq1$
Therefore, since
\begin{align*}
c_m\sqrt{\sum_{i=1}^m\frac{1}{\gamma_i^4}}=O\left(d_m\right),
\end{align*}
the claim of the theorem follows.
}	
\end{proof}

\begin{proof}[Proof of Theorem \ref{GSURER}]
\reply{
For full rank matrices $A\in \mathbb{R}^{m\times m}$  we have from \eqref{GSURErep}
  \begin{align*}
	  \gsure(\alpha,y)-\msee(\alpha)=\sum_{i=1}^m \left( \frac{1}{\gamma_i} - \frac{\gamma_i}{(\gamma_i^2 + \alpha)} \right)^2 \bigl(y_i^2
		-\mathbb{E}[y_i^2]\bigr)
		=\sum_{i=1}^m \left( \frac{1}{\gamma_i} - \frac{\gamma_i}{(\gamma_i^2 + \alpha)} \right)^2 \check{\varepsilon}_i.
	\end{align*}}
As in the proof of Theorem \ref{PSURER} we set $\check{\varepsilon}_i:=y_i^2-\mathbb{E}[y_i^2].$	Recall that the random variables $\check{\varepsilon}_i$ are centered, independent with Var$[\check{\varepsilon}_i]=4\gamma_i^2{x_i^*}^2\sigma^2+2\sigma^4$. We find
   \begin{align*}
	  \gsure(\alpha,y)-\msee(\alpha)=\frac{1}{\gamma_m^2}\sum_{i=1}^m\frac{\gamma_m^2}{\gamma_i^2} \frac{\alpha^2}{(\gamma_i^2+\alpha)^2} \check{\varepsilon_i}.
	\end{align*}
With the same arguments as in the proofs of Theorems	\ref{PSUREl} and \ref{PSURER} we obtain
  
\begin{align*}	
	\sup_{\alpha\in[0,\infty)}\Bigl|\gsure(\alpha,y)-\msee(\alpha)\Bigr|\leq  \sup_{0\leq c_1\leq c_2\leq\ldots\leq1}\Bigl|\frac{1}{\gamma_m^2}\sum_{i=1}^m c_i \check{\varepsilon_i}\Bigr|
	\leq\max_{1\leq j\leq m} \Bigl|\frac{1}{\gamma_m^2}\sum_{i=j}^m \check{\varepsilon_i}\Bigr|.
\end{align*}	
 Again, an application of Kolmogorov's maximal inequality yields
 	\begin{align*}	
	\sup_{\alpha\in[0,\infty)}\Bigl|\gsure(\alpha,y)-\msee(\alpha)\Bigr|=O_\mathbb{P}\Biggl(\biggl(4\sum_{i=1}^m\gamma_i^2{x_i^*}^2\sigma^2+2m\sigma^4\biggr)^{\frac{1}{2}}\Biggr)
\end{align*}
and the first claim of the theorem follows with $\cond(A) = \gamma_1/\gamma_m = 1/\gamma_m$.
{Moreover, in a similar manner as in the proofs of the previous theorems, we find
\begin{align*}
  \mathbb{E}\biggl(\sup_{\alpha\in[0,\infty)}\Bigl|\frac{1}{m \, \cond(A)^2}\bigr(\gsure(\alpha,y)-\msee(\alpha)\bigr)\Bigr|\biggr)^2\leq \mathbb{E}\sup_{1\leq j\leq m}\biggl|\frac{1}{\gamma_m^2}S_j\biggr|^2
\end{align*}
and by the $L^p$ maximal inequality the second claim now follows as
\begin{align*}
  \mathbb{E}\sup_{1\leq j\leq m}\biggl|\frac{1}{\gamma_m^2}S_j\biggr|^2\leq\frac{1}{\gamma_m^4} \mathbb{E}S_m^2=O\bigl(m/\gamma_m^4\bigr).
\end{align*}}
\end{proof}

\section{Consistent LASSO Solver} \label{sec:ADMM}

We want to solve \eqref{eq:VarReg} with $R(x) = \|x\|_1$ for a large number of different values of $\alpha$ but need to ensure that the results are comparable and consistent. For this, we rely on an implementation of the scaled version of ADMM \cite{BoPaChPeEc11} that carries out the iterations for all $\alpha$ simultaneously, with the same penalty parameter $\rho$ for all $\alpha$ and a stop criterion based on the maximal primal and dual residuum over all $\alpha$. Online adaptation of $\rho$ is also performed based on primal and dual residua for all $\alpha$. While ensuring the consistency of the results, this leads to sub-optimal performance for individual $\alpha$'s which has to be countered by using a large number of iterations to obtain high accuracies.
\begin{Algorithm}[All-At-Once ADMM]
Given $\alpha_1,\ldots,\alpha_{N_\alpha}$, $\rho > 0$ (penalty parameter), $\tau > 1$, $\mu > 1$ (adaptation parameters), $K \in \mathbb{N}$ (max. iterations) and $\varepsilon \geqslant 0$ (stopping tolerance), initialize $X^0, Z^0, U^0 \in \R^{n \times N_{\alpha}}$ by $0$, and $Y = y \otimes \mathds{1}_{N_\alpha}^T$, $\Lambda = [\alpha_1,\ldots,\alpha_{N_\alpha}] \otimes \mathds{1}_{n}$, where $\mathds{1}_q$ denotes an all-one column vector in $\R^q$. Further, let $\odot$ denote the component-wise multiplication between matrices (Hadamard product).\\

For $k=1,\ldots,K$ do:
\begin{align*}
X^{k+1} &= (A^* A + \rho I)^{-1} (A^* Y + \rho(Z^k-U^k)) & (x-update)\\[3pt]
Z^{k+1} &= \sign \left(X^{k+1} + U^{k}\right) \odot  \max\left(X^{k+1} + U^{k} - \Lambda/\rho,0\right) & (z-update)\\[3pt]
U^{k+1} &= U^k + X^{k+1} - Z^{k+1} & (u-update)\\[3pt]
r_i^{k+1} &= X_{(\cdot,i)}^{k+1}-Z_{(\cdot,i)}^{k+1} \hspace{1.7em} \quad \forall \quad i=1,\ldots,N_\alpha & (\text{primal residuum})\\[3pt]
s_i^{k+1} &= - \rho (Z_{(\cdot,i)}^{k+1}-Z_{(\cdot,i)}^k) \quad \forall \quad i=1,\ldots,N_\alpha & (\text{dual residuum})\\[3pt]
(U^{k+1},\rho) &= \begin{cases}
(U^{k+1}/\tau,\tau \rho) &\text{if} \hspace{1em} \# \left\lbrace i \; \Big| \; \norm{r_i^{k+1}}_2 > \mu \norm{s_i^{k+1}}_2 \right\rbrace > N_\alpha/2 \\[6pt]
(\tau U^{k+1},\rho/\tau) &\text{if} \hspace{1em} \# \left\lbrace i \; \Big| \; \norm{s_i^{k+1}}_2 > \mu \norm{r_i^{k+1}}_2 \right\rbrace > N_\alpha/2 \\[6pt]
(U^{k+1},\rho) &\text{else}.
\end{cases} & (\rho-\text{adaptation})\\[3pt]
\epsilon^{pri}_i &= \varepsilon \left(\sqrt{n} + \max(\norm{X_{(\cdot,i)}^{k+1}}_2,\norm{Z_{(\cdot,i)}^{k+1}}_2) \right) \quad \forall \quad i=1,\ldots,N_\alpha & (\text{primal stop tol})\\[3pt] 
\epsilon^{dual}_i &= \varepsilon \left( \sqrt{n} +  \rho \norm{U_{(\cdot,i)}^{k+1}}_2 \right) \hspace{6.2em} \quad \forall \quad i=1,\ldots,N_\alpha & (\text{dual stop tol})\\[3pt] 
\text{stop if} &\quad \norm{r_i^{k+1}}_2 < \epsilon_i^{pri} \; \wedge \; \norm{s_i^{k+1}}_2 < \epsilon_i^{dual} \quad \forall \quad i=1,\ldots,N_\alpha
\end{align*}  
\end{Algorithm}
The algorithm returns both $X_{(\cdot,i)}^{k+1}$ and $Z_{(\cdot,i)}^{k+1}$ as approximations of the solution to \eqref{eq:VarReg} with $R(x) = \|x\|_1$ and $\alpha = \alpha_i$ of which we use $Z_{(\cdot,i)}^{k+1}$ for our purposes as it is exactly sparse due to the soft-thresholding step (\textit{z-update}). In the computations, we furthermore initialized $\rho = 1$ and used $\tau = 2$, $\mu = 1.1$, $\varepsilon  = 10^{-14}$ and $K = 10^4$.

\bigskip

{\bf Acknowledgements.}
The work of N. Bissantz, H. Dette and K. Proksch 
has been supported  by the
Collaborative Research Center ``Statistical modeling of nonlinear
dynamic processes'' (SFB 823, Projects A1, C1, C4) of the German Research Foundation (DFG).


\bibliographystyle{amsplain} 
\bibliography{literature}

\end{document}